\newtheorem{theorem}{Theorem}
\newtheorem{lemma}[theorem]{Lemma}
\newtheorem{definition}[theorem]{Definition}
\newtheorem{remark}[theorem]{Remark}
\newtheorem{assumption}[theorem]{Assumption}
\def\BState{\State\hskip-\ALG@thistlm}
\algnewcommand{\IfThenElse}[3]{
	\State \algorithmicif\ #1\ \algorithmicthen\ #2\ \algorithmicelse\ #3}
\newcommand{\gc}{{\|\mathbf{g}_k +\mathbf{A}^T\bm{\lambda}_k^C \|_2}}
\newcommand{\st}{\rm s.t.}
\newcommand{\cX}{{\cal X}}
\newcommand{\bzero}{\mathbf{0}}
\newcommand{\cpsi}{{\cal \psi}}
\newcommand{\blambda}{\boldsymbol{\lambda}}
\newcommand{\ba}{\mathbf{a}}
\newcommand{\bb}{\mathbf{b}}
\newcommand{\bd}{\mathbf{d}}
\newcommand{\bg}{\mathbf{g}}
\newcommand{\bs}{\mathbf{s}}
\newcommand{\bx}{\mathbf{x}}
\newcommand{\bA}{\mathbf{A}}
\newcommand{\bH}{\mathbf{H}}
\newcommand{\bI}{\mathbf{I}}
\title{A Trust Region Method for Finding Second-Order Stationarity in Linearly Constrained Non-Convex Optimization}
\author{Maher  Nouiehed	\and Meisam Razaviyayn\thanks{Department of Industrial and Systems Engineering, University of Southern California.}}
\begin{document}
	
	\maketitle
	
	\begin{abstract}
		\noindent Motivated by TRACE algorithm~\cite{curtis2017trust}, we propose  a trust region algorithm for finding second order stationary points of a linearly constrained non-convex optimization problem. We show the convergence of the proposed algorithm to ($\epsilon_g, \epsilon_H$)-second order stationary points in   $\widetilde{\mathcal{O}}\left(\max\left\{\epsilon_g^{-3/2}, \epsilon_H^{-3}\right\}\right)$ iterations. This iteration complexity is  achieved for general linearly constrained optimization without cubic regularization of the objective function. 
	\end{abstract}
	
	\section{Introduction}
	
	Due to its wide application in machine learning, solving non-convex optimization problems encountered significant attention in recent years \cite{anandkumar2016efficient,cartis2013evaluation, cartis2015evaluation,  cartis2017second, curtis2017inexact,curtis2017trust, lu2012trust}. While this topic has been studied for decades, recent applications and modern analytical and computational tools revived this area of research. In particular, a wide variety of numerical methods for solving non-convex problems have been proposed in recent years \cite{lee2016gradient, hong2018gradient, nesterov2006cubic, cartis2012adaptive, cartis2011adaptive, cartis2011adaptive-2,nouiehed2019solving}. \\
	
	For general non-convex optimization problems, it is well-known that computing a local optimum is NP-Hard \cite{murty1987some}. Given this hardness result, recent focus has been shifted toward computing (approximate) first and second-order stationary points of the objective function. The latter set of points provides stronger guarantees compared to the former as it constitutes a smaller subset of points that includes local and global optima.  Therefore, when applied to problems with ``\textit{nice}'' geometrical properties, the set of second order stationary points could even coincide with the set of global optima -- see \cite{bandeira2016low, barazandeh2018behavior, boumal2016nonconvex, nouiehed2018learning, sun2016geometric, ge2015escaping,sun2017complete-2,  sun2017complete} for examples of such objective functions.\\

	Convergence to second-order stationarity in  smooth unconstrained setting has been thoroughly investigated in the optimization literature \cite{ge2016matrix, curtis2017exploiting, cartis2011adaptive, cartis2011adaptive-2, nesterov2006cubic, curtis2017inexact, curtis2017trust, ge2015escaping}. As a second-order algorithm, \cite{nesterov2006cubic} proposed a cubic regularization method that converges to approximate second-order stationarity in finite number of steps. More recently, \cite{cartis2011adaptive, cartis2011adaptive-2} proposed the Adaptive Regularization Cubic algorithm (ARC) that computes an approximate solution for a local cubic model at each iteration. They established convergence to  first and second order stationary points with optimal complexity rates. Motivated by these rates, \cite{curtis2017trust} proposed an adaptive trust region method, entitled TRACE, and established iteration complexity bounds for finding $\epsilon$-first-order stationarity with worst-case  iteration complexity ${\cal O}(\epsilon^{-3/2})$; and for finding   $(\epsilon_g,\epsilon_H)$-second-order stationarity with worst-case complexity ${\cal O}(\max\{\epsilon_g^{-3/2}, \epsilon_H^{-3}\})$. This method alters the acceptance criteria adopted by traditional trust region methods, and implements a new mechanism for updating the trust region radius. A more recent second-order algorithm that uses a dynamic choice of direction and step-size was proposed in \cite{curtis2017exploiting}. This method computes first and second order descent directions and chooses the direction that predicts a more significant reduction in the objective value. All of the above methods satisfy the set of generic conditions of a general framework proposed in \cite{curtis2017inexact}.\\
	
	Recent results show that for smooth unconstrained optimization problems, even first order methods can converge to second-order stationarity, almost surely. For instance, \cite{ge2015escaping} shows that noisy stochastic gradient descent escapes \textit{strict saddle points} with probability one. Therefore, when applied to problems satisfying the \textit{strict saddle property} this method converges to a local minimum. A similar result was shown for the vanilla gradient descent algorithm in \cite{lee2016gradient}. 
	A negative result provided by \cite{du2017gradient} shows that vanilla gradient descent can take exponential number of steps to converge to second-order stationarity. This computational inefficiency can be overcome by a smart perturbed form of gradient descent proposed in \cite{jin2017escape}. \\
	
	Most of the above results can be extended to the smooth constrained optimization in the presence of simple manifold constraints. In this case, \cite{lee2017first} shows that manifold gradient descent converges to second-order stationarity, almost surely. More recently, \cite{hong2018gradient} established similar results for gradient primal-dual algorithms applied on linearly constrained optimization problems. When the constraints are non-manifold type, projected gradient descent is a natural replacement of gradient descent. As a negative result, \cite{nouiehed2018convergence} constructs an example, with a single linear constraint, showing that there is a positive probability that projected gradient descent with random initialization can converge to a strict saddle point. This raises the question of
	\textit{whether there exist a first order method that can converge to second-order stationarity in the presence of inequality constraints.} To our knowledge, no affirmative answer has been given to this question to date.\\
	
	The answer to the question above is obvious when replacing first-order methods with second-order methods. In fact, convergence to second-order stationarity in the presence of convex constraints has been established by adapting many of the aforementioned second-order algorithms \cite{cartis2012adaptive, cartis2015evaluation, cartis2014complexity}. The work in \cite{cartis2012adaptive} adapts the ARC algorithm and showed convergence to $\epsilon_g$-first-order stationarity in at most ${\cal O}(\epsilon_g^{-3/2})$ iterations. \cite{birgin2018regularization} uses active set method and cubic regularization to achieve this rate for special types of constraints. The work\cite{bian2015complexity} uses interior point method to achieve a second order stationarity in $O\big(\max\{\epsilon_g^{-3/2}, \epsilon_H^{-3}\}\big)$ iterations for box constraints. For general constraints, \cite{cartis2017second} proposed a \textit{conceptual} trust region algorithm that can compute an $\epsilon$-${q^{th}}$ stationary point in at most ${\cal O}(\epsilon^{-q-1})$ iterations. 
	More recently, \cite{mokhtari2018escaping} proposed a general framework for computing $(\epsilon_g, \epsilon_H)$-second-order stationary points for convex-constrained optimization problem with worst-case complexity ${\cal O}\big(\max\{\epsilon_g^{-2}, \epsilon_H^{-3}\}\big)$. In particular, this framework allows for using Frank-Wolfe or projected gradient descent to converge to an approximate first-order method, and then computes a second-order descent direction if it exists. The iteration complexity bounds computed for these methods hide the per-iteration complexity of solving the quadratic or cubic sub-problems. As shown in \cite{nouiehed2018convergence}, for linearly constrained non-convex problems, even checking whether a given point is an \textit{approximate} second-order stationary point is NP-Hard. Despite this hardness result, \cite{nouiehed2018convergence} proposed a second-order Frank-Wolfe algorithm that adapts the dynamic method introduced in \cite{curtis2017exploiting}, and identified instances for which solving the constrained quadratic sub-problem can be done efficiently. The algorithm converges to approximate first and second-order stationarity with a worst-case complexity similar to \cite{curtis2017exploiting}. 
	However, second-order information as utilized in the adapted ARC algorithm yields better iteration complexity rates. Motivated by this result, in this paper, we propose a trust region algorithm, entitled LC-TRACE, that adapts TRACE to linearly-constrained non-convex problems. We establish the  convergence of our algorithm to $(\epsilon_g,\epsilon_H)$-second order stationarity in at most $\widetilde{\cal O}(\epsilon_g^{-3/2}, \epsilon_H^{-3})$ iterations. \\
	
	The remainder of this paper is organized as follow. In section~\ref{sec:FirstSecondOS}, we first review and define the concepts of first and second order stationarity. Then, we review some of our previous results in section~\ref{sec:ReviewPrevious}. Finally, in section~\ref{sec:LC-TRACE}, we propose and analyze LC-TRACE algorithm.
	

	\section{First and Second Order Stationarity Definitions}
	\label{sec:FirstSecondOS}
	To understand the definition of first and second order stationarity, let us first start by considering the unconstrained optimization problem
	\begin{equation}\label{eq:General-Optimization-Prob}
	\underset{\bx \in \mathbb{R}^{n}}{\min} \, \, f(\bx),
	\end{equation}
	where $f: \mathbb{R}^n \mapsto \mathbb{R}$ is a twice continuously differentiable function. We say a point $\bar\bx$ is a first order stationary point (FOSP) of~\eqref{eq:General-Optimization-Prob} if $\nabla f(\bar\bx) = \bzero$. Similarly, a point $\bar\bx$ is said to be a second-order stationary point (SOSP) of~\eqref{eq:General-Optimization-Prob} if $\nabla f(\bar\bx) = \bzero$ and $\nabla^2 f(\bar\bx) \succeq 0$. In practice, most of the algorithms used for finding stationary points are iterative. Therefore, we define the concept of approximate first and second order stationarity. We say a point $\bar\bx$ is an 
	$\epsilon_g$-first-order stationary point if 
	\begin{equation}\label{eq:FOSUnconstrained}
	\|\nabla f(\bar\bx)\|_2 \leq \epsilon_g.
	\end{equation}
	Moreover, we say a point $\bar\bx$ is an 
	$(\epsilon_g, \epsilon_H)$-second-order stationary point if 
	\begin{equation}\label{eq:SoSUnconstrained}
	\|\nabla f(\bar\bx)\|_2 \leq \epsilon_g \mbox{   and   } \nabla^2 f(\bar\bx) \succeq -\epsilon_H\bI.
	\end{equation}
	

	We now extend these definitions to the constrained optimization problem
	\begin{equation}\label{eq:General-Cons-Optimization-Prob}
	\underset{\bx \in \mathcal{P}}{\min} \, \, f(\bx),
	\end{equation}
	where $\mathcal{P}\subseteq \mathbb{R}^n$ is a closed convex set. As defined in \cite{bertsekas1999nonlinear}, we say $\bar{\bx} \in {\cal P}$ is a FOSP of \eqref{eq:General-Cons-Optimization-Prob} if 
	\begin{equation}\label{eq:FOSConstrained}
	\langle \nabla f(\bar{\bx}), \bx - \bar{\bx} \rangle \geq 0 \quad \forall \, \bx \in \mathcal{P}.
	\end{equation}
	Similarly, we say a point $\bar{\bx}$ is a SOSP of the optimization problem \eqref{eq:General-Cons-Optimization-Prob} if $\bar{\bx} \in {\cal P}$ is a first order stationary point and
	\begin{equation}\label{eq:SoSConstrained}
	0\leq \bd^T \nabla^2f(\bar{\bx}) \bd,\quad \forall \, \bd \,\,\st\,\, \langle\bd,\nabla f(\bar{\bx}) \rangle =0\textrm{ and } \bar{\bx} + \bd \in \mathcal{P}. 
	\end{equation}
	
	Notice that when ${\cal P}=\mathbb{R}^n$, the definitions above obviously correspond to the definitions in the unconstrained case.\\
	
	Motivated by \eqref{eq:FOSConstrained} and \eqref{eq:SoSConstrained}, given a feasible point $\bx$, we define the following first and second order stationarity measures
	\begin{equation}\label{eq:X_k}
	\begin{split}
	\cX(\bx) \triangleq -  \;\min_{\bs}\quad &\langle \nabla f(\bx), \bs \rangle   \\
	\st \quad & \bx + \bs \in {\cal P}, \, \|\bs\|\leq 1.
	\end{split}
	\end{equation}
	and 
	\begin{equation}\label{eq:psi_k}
	\begin{split}
	{\cal \psi}(\bx) \triangleq - \; \min_{\bd}\quad &\bd^T \nabla^2f(\bx)\bd \,    \\
	\st \quad & \bx + \bd \in {\cal P},  \, \|\bd\|\leq 1\\ &\langle \nabla f(\bx), \bd \rangle \leq 0.
	\end{split}
	\end{equation}

	Notice that since $\bx$ is feasible,  $\cX(\bx) \geq 0 $ and ${\cal \psi}(\bx) \geq 0$. Moreover, these optimality measures, which are also used in \cite{nouiehed2018convergence}, can be linked to the standard definitions in \cite{bertsekas1999nonlinear} by the following Lemma. 
	\begin{lemma}[\cite{nouiehed2018convergence}] \label{lem:StationarityContinuous}
		The first and second order stationarity measures $\cX(\cdot)$ and $\cpsi(\cdot)$ are continuous in $\bx$. Moreover, if $\bar\bx \in {\cal P}$ then
		\begin{itemize}
			\item $\cX (\bar\bx) = 0$ if and only if $\bar\bx$ is a first order stationary point.
			\item $\cX(\bar{\bx}) = \cpsi(\bar{\bx}) = 0$ if and only if $\bar\bx$ is a second order stationary point.
		\end{itemize}
	\end{lemma}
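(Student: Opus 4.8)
The plan is to regard $\cX(\cdot)$ and $\cpsi(\cdot)$ as optimal‑value functions of the parametric programs \eqref{eq:X_k} and \eqref{eq:psi_k} over the feasible correspondences $\Gamma_\cX(\bx)\triangleq\{\bs:\bx+\bs\in{\cal P},\,\|\bs\|\le1\}$ and $\Gamma_\cpsi(\bx)\triangleq\{\bd:\bx+\bd\in{\cal P},\,\|\bd\|\le1,\,\langle\nabla f(\bx),\bd\rangle\le0\}$, to obtain continuity from a Berge‑type stability analysis of these set‑valued maps, and to deduce the two equivalences by elementary arguments that use convexity of ${\cal P}$ together with the fact that positively rescaling a feasible direction keeps it feasible (and rescales the quadratic form by the square of the scalar).

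For the equivalences I would first treat $\cX$. If $\bar\bx$ is a FOSP, then applying \eqref{eq:FOSConstrained} with $\bx=\bar\bx+\bs$ for every admissible $\bs$ gives $\langle\nabla f(\bar\bx),\bs\rangle\ge0$, so the minimum in \eqref{eq:X_k} is nonnegative; combined with the reverse inequality $\cX(\bar\bx)\ge0$ (already noted in the text) this forces $\cX(\bar\bx)=0$. Conversely, if $\cX(\bar\bx)=0$ then $\langle\nabla f(\bar\bx),\bs\rangle\ge0$ for every admissible $\bs$; given any $\bx\in{\cal P}$, the direction $\bs=t(\bx-\bar\bx)$ with $t\in(0,1]$ small enough is admissible because $\|\bs\|\le1$ and $\bar\bx+\bs=(1-t)\bar\bx+t\bx\in{\cal P}$, and dividing $\langle\nabla f(\bar\bx),\bs\rangle\ge0$ by $t$ yields \eqref{eq:FOSConstrained}. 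For $\cpsi$, if $\bar\bx$ is an SOSP it is in particular a FOSP so $\cX(\bar\bx)=0$; moreover for any $\bd$ feasible in \eqref{eq:psi_k} the FOSP property applied to $\bx=\bar\bx+\bd$ forces $\langle\nabla f(\bar\bx),\bd\rangle\ge0$, hence $=0$, so \eqref{eq:SoSConstrained} gives $\bd^T\nabla^2 f(\bar\bx)\bd\ge0$; since $\bd=\bzero$ attains this bound, $\cpsi(\bar\bx)=0$. Conversely, if $\cX(\bar\bx)=\cpsi(\bar\bx)=0$ then $\bar\bx$ is a FOSP, and for any $\bd$ with $\langle\bd,\nabla f(\bar\bx)\rangle=0$ and $\bar\bx+\bd\in{\cal P}$ the rescaled direction $t\bd$ with small $t\in(0,1]$ is feasible in \eqref{eq:psi_k}, so $t^2\,\bd^T\nabla^2 f(\bar\bx)\bd\ge-\cpsi(\bar\bx)=0$; dividing by $t^2$ gives \eqref{eq:SoSConstrained}.

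For continuity, the objectives $\bs\mapsto\langle\nabla f(\bx),\bs\rangle$ and $\bd\mapsto\bd^T\nabla^2 f(\bx)\bd$ depend jointly continuously on $(\bx,\cdot)$ since $f\in C^2$, and $\Gamma_\cX,\Gamma_\cpsi$ are compact‑valued with closed graphs (intersections of ${\cal P}-\bx$, the unit ball, and, for $\cpsi$, a halfspace whose normal $\nabla f(\bx)$ moves continuously), hence they are upper semicontinuous; by Berge's theorem this already yields upper semicontinuity of $\cX$ and $\cpsi$. Lower semicontinuity of $\cX$ follows from lower semicontinuity of $\Gamma_\cX$, which is explicit: for $\bx,\bx'\in{\cal P}$ and $\bs\in\Gamma_\cX(\bx)$, the point $\bs'=\frac{1}{1+\|\bx-\bx'\|}\big(\bs+\bx-\bx'\big)$ lies in the unit ball and satisfies $\bx'+\bs'=\frac{1}{1+\|\bx-\bx'\|}(\bx+\bs)+\frac{\|\bx-\bx'\|}{1+\|\bx-\bx'\|}\bx'\in{\cal P}$ by convexity, while $\|\bs'-\bs\|\to0$ as $\bx'\to\bx$; hence $\cX$ is continuous.

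The remaining and, I expect, main obstacle is the continuity of $\cpsi$, since the analogous transplant $\bd'=\frac{1}{1+\|\bx-\bx'\|}(\bd+\bx-\bx')$ need not satisfy $\langle\nabla f(\bx'),\bd'\rangle\le0$. When $\bar\bx$ is \emph{not} a FOSP, $\cX(\bar\bx)>0$ provides a strict Slater direction $\bd^\circ$ with $\|\bd^\circ\|<1$, $\bar\bx+\bd^\circ\in{\cal P}$ and $\langle\nabla f(\bar\bx),\bd^\circ\rangle<0$; convex combinations of any $\bd\in\Gamma_\cpsi(\bar\bx)$ with $\bd^\circ$ are strictly feasible for all three constraints and arbitrarily close to $\bd$, they transplant as above, and a diagonal argument over the combination parameter and the perturbation recovers lower semicontinuity of $\Gamma_\cpsi$, hence continuity of $\cpsi$, at $\bar\bx$. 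When $\bar\bx$ is a FOSP, the constraint $\langle\nabla f(\bar\bx),\bd\rangle\le0$ collapses $\Gamma_\cpsi(\bar\bx)$ onto the face of $\{\bd:\bar\bx+\bd\in{\cal P},\,\|\bd\|\le1\}$ minimizing $\langle\nabla f(\bar\bx),\cdot\rangle$, and here $\Gamma_\cpsi$ need not be lower semicontinuous, so continuity must be salvaged from the structure rather than from a generic stability theorem: since $\nabla f(\cdot)$ — hence the active halfspace — varies continuously, the curvature attainable over the perturbed exposed faces cannot drop below $\cpsi(\bar\bx)$ in the limit. I expect the careful treatment of this case (for instance, splitting according to whether the nearby points $\bx'$ are themselves FOSPs and, when they are, squeezing $\langle\nabla f(\bx'),\bd'\rangle$ between $0$ — from first‑order stationarity at $\bx'$ together with $\bx'+\bd'\in{\cal P}$ — and $\|\nabla f(\bx')-\nabla f(\bar\bx)\|\to0$, then correcting $\bd'$ toward the exposed face) to be the crux; once it is in place, combining upper and lower semicontinuity gives continuity of $\cpsi$ and completes the proof.
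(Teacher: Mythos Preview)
The paper does not prove this lemma --- it is quoted from \cite{nouiehed2018convergence} without argument --- so your proposal must be judged on its own. Your treatment of the two equivalences and of the continuity of $\cX$ (via the explicit transplant $\bs'=\tfrac{1}{1+\|\bx-\bx'\|}(\bs+\bx-\bx')$ and Berge's theorem) is correct, as is your Slater argument for continuity of $\cpsi$ at points that are not first-order stationary.

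The gap you yourself flag, lower semicontinuity of $\cpsi$ at a FOSP, is genuine, and your expectation that a ``careful treatment'' closes it is misplaced: with \eqref{eq:psi_k} as written, $\cpsi$ is \emph{not} continuous in general. Take ${\cal P}=\{(x,y):y\le 0\}$ and $f(x,y)=-\tfrac12x^2+xy$, so that $\nabla f=(-x+y,\,x)$ and $\nabla^2 f=\bigl(\begin{smallmatrix}-1&1\\[1pt]1&0\end{smallmatrix}\bigr)$. At $\bar\bx=(0,0)$ the gradient vanishes, the halfspace constraint in \eqref{eq:psi_k} is vacuous, and $\cpsi(\bar\bx)=-\lambda_{\min}(\nabla^2 f)=(1+\sqrt5)/2$, attained at the unit eigenvector $\approx(0.851,-0.526)$, which satisfies $d_2<0$ and is therefore feasible. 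At $\bx_\varepsilon=(-\varepsilon,0)\in{\cal P}$ for $\varepsilon>0$, however, $\nabla f(\bx_\varepsilon)=(\varepsilon,-\varepsilon)$ forces $d_1\le d_2$, so $\Gamma_\cpsi(\bx_\varepsilon)=\{\bd:d_1\le d_2\le0,\ \|\bd\|\le1\}$; neither eigendirection lies in this wedge, and a direct computation gives $\min_{\bd\in\Gamma_\cpsi(\bx_\varepsilon)}(-d_1^2+2d_1d_2)=-1$, attained at $\bd=(-1,0)$. Hence $\cpsi(\bx_\varepsilon)=1$ for every $\varepsilon>0$ while $\cpsi(\bar\bx)=(1+\sqrt5)/2>1$, so $\cpsi$ fails lower semicontinuity at $\bar\bx$. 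The mechanism is exactly the one you feared: once $\nabla f$ becomes nonzero the halfspace cuts off the ${\cal P}$-feasible negative-curvature direction, and the evenness $\bd\mapsto-\bd$ of the quadratic cannot rescue it because $-\bd$ violates the ${\cal P}$-constraint. What your Berge argument does legitimately establish is upper semicontinuity of $\cpsi$; that is all that survives in general, so the continuity assertion for $\cpsi$ as stated here cannot be proved.
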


	\vspace{0.2cm}
	Using this lemma, we define the approximate first and second order stationarity.
	
	\vspace{0.1cm}
	
	\begin{definition}\label{def:EpsFOSSOS}
		\textbf{Approximate Stationary Point:} For problem \eqref{eq:General-Cons-Optimization-Prob}, \begin{itemize}
			\item A point $\bar\bx \in {\cal P}$ is said to be an $\epsilon_g$-first order stationary point if $\cX(\bar\bx) \leq \epsilon_g$. 
			\item A point $\bar\bx \in {\cal P}$ is said to be an $(\epsilon_g, \epsilon_H)$-second order stationary point if $\cX(\bar\bx) \leq \epsilon_g$ and $\cpsi(\bar{\bx}) \leq \epsilon_H$.  
		\end{itemize} 
	\end{definition}
	
	\vspace{0.2cm}
	
	In the unconstrained scenario, these definitions correspond to the standard definitions \eqref{eq:SoSUnconstrained} and \eqref{eq:FOSUnconstrained}.
	
	\vspace{0.2cm}
	
	\begin{remark}
		Notice that our definition of $(\epsilon_g,\epsilon_H)$-second order stationarity is different than the definition in \cite{mokhtari2018escaping}. In particular, there are two major differences:
		\begin{itemize}
			\item[1)] The definition used for approximate first and second order stationarity in \cite{mokhtari2018escaping} does not include the normalization constraints $\|\bs\|\leq 1$ and $\|\bd\|\leq 1$ in \eqref{eq:X_k} and \eqref{eq:psi_k}. 
			\item[2)] The  second order optimality measure in \cite{mokhtari2018escaping} is defined based on using equality constraint $\langle \nabla f(\bx), \bd \rangle = 0$ in~\eqref{eq:psi_k} instead of the inequality constraint $\langle \nabla f(\bx), \bd \rangle \leq 0$. 
		\end{itemize}
		To understand the necessity of using normalization, consider the optimization problem $\min x^2$ and the point $\bar{x} = \epsilon$ with $\epsilon$ being (arbitrary) small. Clearly, $\bar{x}$ is close to optimal, while the optimality measure \eqref{eq:X_k} does not reflect this approximate optimality if we do not include the normalization constraint in \eqref{eq:X_k}. 
		
		To understand the importance of using inequality constraint $\langle \nabla f(\bx), \bd \rangle \leq 0$ instead of  equality constraint in \eqref{eq:psi_k}, consider the scalar optimization problem 
		\begin{align}
		\min_x \;\; &-\frac{1}{2}  x^2 \nonumber\\
		\st \quad &0\leq x\leq 10. \nonumber
		\end{align}
		Let us look at the point $\bar{x}  = \epsilon >0$. Using second order information, one can  say that $\bar{x}$ is not a reasonable point to terminate your algorithm at. This is because the Hessian provides a descent direction with large amount of improvement in the second order approximation of the objective value. This fact is also reflected in the value of $\psi(\bar{x})= 1$. However, if we had used equality constraint $\langle \nabla f(\bx), \bd \rangle =  0$ in the definition of $\psi(\cdot)$ in \eqref{eq:psi_k}, then the value of $\psi(\cdot)$ would have been zero.
	\end{remark}
	
	\vspace{0.2cm}
	
	\begin{remark}
		There are other definitions of second order stationarity in the literature. For example, the works~\cite{bian2015complexity, birgin2018regularization} use a scaled version of the Hessian in different directions to define second order stationarity for box constraints. Recently, \cite{o2019log} carefully revised it to account for the coordinates which are very far from the boundary. Another related definition of second order stationary, which leads to a practical perturbed gradient descent algorithm, is provided in \cite{SongTao2019} for general linearly constrained optimization problems.
	\end{remark}

	\section{Finding second-order stationary points for constrained optimization}
	\label{sec:ReviewPrevious}
	Consider the quadratic co-positivity problem
	\begin{equation}\label{co-positivity-problem}
	\min_{\bx \in \mathbb{R}^n} \quad \frac{1}{2} \bx^T \mathbf{Q}\bx \quad \quad \st \quad  \bx \geq \bzero, \,\,\|\bx\| \leq 1.
	\end{equation}
	Clearly, checking whether $\bar\bx = \bzero$ is a second order stationary point of~\eqref{co-positivity-problem} is equivalent to checking its local optimality, which is an NP-Hard problem \cite{murty1987some}. This observation shows that checking exact second order stationarity is Hard. The following result, which is borrowed from~\cite{nouiehed2018convergence}, shows that even checking \textit{approximate} second order stationarity is NP-hard. 
	
	\begin{theorem} [Theorem~6 in \cite{nouiehed2018convergence}]
		There is no algorithm which can check whether x = 0 is an $(\epsilon_g,\epsilon_H)$-second order stationary point in polynomial time in $(n, 1/{\epsilon_H})$, unless P = NP.
	\end{theorem}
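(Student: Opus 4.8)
The plan is to reduce the NP-complete problem \textsc{Independent Set} to the task of checking approximate second-order stationarity at the origin of the quadratic co-positivity problem \eqref{co-positivity-problem}, for a cleverly chosen matrix $\bQ$. The origin is convenient for two reasons. First, for the quadratic objective $f(\bx)=\tfrac12\bx^T\bQ\bx$ one has $\nabla f(\bzero)=\bzero$, so $\cX(\bzero)=0$ and the first-order requirement in Definition~\ref{def:EpsFOSSOS} holds automatically for every $\epsilon_g\ge 0$. Second, substituting $\bar\bx=\bzero$ into \eqref{eq:psi_k} (the constraint $\langle\nabla f(\bzero),\bd\rangle\le 0$ becomes vacuous) gives $\cpsi(\bzero)=-\min\{\bd^T\bQ\bd:\bd\ge\bzero,\ \|\bd\|_2\le 1\}$, so $\cpsi(\bzero)=0$ exactly when $\bQ$ is copositive and $\cpsi(\bzero)>0$ otherwise. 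Hence deciding whether $\bzero$ is an $(\epsilon_g,\epsilon_H)$-SOSP is a gap version of copositivity testing, and what must be engineered is a \emph{gap}: a reduction under which $\cpsi(\bzero)$ is either $0$ or bounded below by a fixed positive constant.

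First I would invoke the de Klerk--Pasechnik characterization of the independence number: for a graph $G$ on $n$ vertices with adjacency matrix $\bA_G$, the matrix $t(\bA_G+\bI)-\bJ$ (with $\bJ$ the all-ones matrix) is copositive if and only if $t\ge\alpha(G)$. Given an instance $(G,k)$ of \textsc{Independent Set}, i.e.\ the question ``is $\alpha(G)\ge k$?'', I would set $\bQ:=(k-1)(\bA_G+\bI)-\bJ$ and feed problem \eqref{co-positivity-problem} with this $\bQ$ to the hypothetical stationarity checker. If $(G,k)$ is a NO-instance, then $\alpha(G)\le k-1$, so $\bQ$ is copositive and $\cpsi(\bzero)=0$. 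If it is a YES-instance, fix an independent set $S$ with $|S|=k$; its indicator $\bones_S$ satisfies $\bones_S^T(\bA_G+\bI)\bones_S=k$ and $\bones_S^T\bJ\bones_S=k^2$, so $\bones_S^T\bQ\bones_S=k(k-1)-k^2=-k$, and the normalized direction $\bd=\bones_S/\sqrt{k}$ is feasible in \eqref{eq:psi_k} at $\bzero$ with $\bd^T\bQ\bd=-1$; hence $\cpsi(\bzero)\ge 1$. Thus the reduction produces the clean constant gap: $\cpsi(\bzero)=0$ on NO-instances and $\cpsi(\bzero)\ge 1$ on YES-instances.

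To finish, I would fix any constants $\epsilon_g\ge 0$ and $\epsilon_H\in(0,1)$, say $\epsilon_H=1/2$. Then $\bzero$ is an $(\epsilon_g,\epsilon_H)$-second-order stationary point of \eqref{co-positivity-problem} with the above $\bQ$ if and only if $\cpsi(\bzero)\le 1/2$, i.e.\ if and only if $\cpsi(\bzero)=0$, i.e.\ if and only if $(G,k)$ is a NO-instance of \textsc{Independent Set}. The reduction is polynomial time (the entries of $\bQ$ are integers of magnitude at most $k\le n$), so an algorithm that checked $(\epsilon_g,\epsilon_H)$-SOSP-ness of $\bzero$ in time polynomial in $(n,1/\epsilon_H)$ — here $1/\epsilon_H=2$ — together with a negation of its output would decide \textsc{Independent Set} in polynomial time, forcing $\mathrm{P}=\mathrm{NP}$; in particular the hardness holds even for a fixed constant $\epsilon_H$. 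A routine point to verify along the way is that \eqref{co-positivity-problem} with this $\bQ$ genuinely falls under the framework \eqref{eq:General-Cons-Optimization-Prob} (the objective is a polynomial, hence twice continuously differentiable, and $\{\bx:\bx\ge\bzero,\ \|\bx\|_2\le 1\}$ is closed and convex), so that Lemma~\ref{lem:StationarityContinuous} and Definition~\ref{def:EpsFOSSOS} apply.

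The step I expect to be the main obstacle is not the logical skeleton of the reduction but producing the gap: the elementary reduction from copositivity only separates $\cpsi(\bzero)=0$ from $\cpsi(\bzero)>0$, which is meaningless for an \emph{approximate} notion, so one needs a family of matrices whose non-copositivity is ``robust'', i.e.\ witnessed by a direction at which the normalized quadratic form is at most $-\Omega(1/\mathrm{poly}(n))$ — otherwise the tolerance $\epsilon_H$ would have to shrink faster than any polynomial and the complexity claim would be vacuous. The de Klerk--Pasechnik construction, tested against a $0/1$ indicator vector of an independent set, is precisely what supplies such robustly non-copositive matrices and upgrades the strict inequality to $\cpsi(\bzero)\ge 1$.
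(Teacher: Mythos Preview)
The paper does not actually prove this theorem: it is quoted verbatim as Theorem~6 of \cite{nouiehed2018convergence} and used as a black box, with no argument given in the present manuscript. So there is no ``paper's own proof'' here to compare against.

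That said, your argument is correct and is the natural one. At $\bar\bx=\bzero$ the gradient vanishes, so $\cX(\bzero)=0$ and the first-order part of Definition~\ref{def:EpsFOSSOS} is free; and $\cpsi(\bzero)=-\min\{\bd^T\bQ\bd:\bd\ge\bzero,\ \|\bd\|_2\le 1\}$, which is exactly (minus) the copositivity minimum of $\bQ$. Your use of the Motzkin--Straus/de~Klerk--Pasechnik characterization, setting $\bQ=(k-1)(\bA_G+\bI)-\bJ$, is the standard way to manufacture the required \emph{gap}: copositive (hence $\cpsi(\bzero)=0$) when $\alpha(G)\le k-1$, and $\cpsi(\bzero)\ge 1$ via the normalized indicator $\bones_S/\sqrt{k}$ of a size-$k$ independent set when $\alpha(G)\ge k$. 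Fixing $\epsilon_H\in(0,1)$ then decides \textsc{Independent Set}, and the reduction is clearly polynomial in $n$. This is almost certainly the same route taken in \cite{nouiehed2018convergence}; the paper under review simply imports the conclusion. Your closing remark that the hardness already holds for a fixed constant $\epsilon_H$ (so in particular for time polynomial in $1/\epsilon_H$) is the right way to read the quantifiers in the statement.
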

	This hardness result implies that we should not expect  an efficient algorithm for finding second order stationary points of non-convex problems. However, in this problem, \textit{the source of hardness stems from the number of linear inequality constraints}. In fact, when we have a small constant number of linear constraints (say $m$ fixed constraints), \cite{hsia2013trust} proposed a backtracking approach that efficiently solves this quadratic constrained optimization problem. Although the method proposed is exponential in $m$ as it uses an exhaustive search over the set of active constraints, it can still be used when $m$ is small. Motivated by this observation, in the next section we describe our LC-TRACE algorithm and analyze its iteration complexity for finding second order stationary points of linearly constrained non-convex optimization problems. A core assumption in our algorithm is that a certain quadratic objective can be minimized given existing linear constraints (for example when $m$ is small).
	
	\section{A Trust Region Algorithm for Solving Linearly-Constrained Smooth Non-Convex Optimization Problems}\label{sec:LC-TRACE}
	Consider the optimization problem
	\begin{equation}\label{Linearly-Cons-Prob}
	\begin{split}
	\min_{\bx \in \mathbb{R}^{ n}} \, \, & f(\bx), \\
	\st \quad & \bA  \bx \leq \bb,
	\end{split}
	\end{equation}
	where $\bA \in \mathbb{R}^{m\times n}$ and $\bb \in \mathbb{R}^m$.
	In this section, we propose a trust region algorithm, entitled LC-TRACE (Linearly Constrained TRACE), that adapts TRACE \cite{curtis2017trust} to the above linearly-constrained non-convex problem. We establish its convergence to $\epsilon_g$-first order stationarity with iteration complexity order $\widetilde{\cal O}(\epsilon_g^{-3/2})$. This method is then used to develop an algorithm to converge to~$(\epsilon_g, \epsilon_H)$-second-order stationarity with the iteration complexity $\widetilde{\cal O}\big(\max\{\epsilon_g^{-3/2}, \epsilon_H^{-3}\}\big)$.\\

	LC-TRACE is different from the traditional trust region method proposed in \cite{cartis2017second} for constrained optimization. More specifically, LC-TRACE utilizes the mechanisms used in TRACE \cite{curtis2017trust} to provide a faster convergence rate compared to \cite{cartis2017second}. The improved convergence rate matches the rates achieved by adapted ARC \cite{cartis2012adaptive} and TRACE \cite{curtis2017trust}, up to logarithmic factors. Since applying TRACE directly to constrained optimization fails (as will be discussed later), we introduced modifications to adapt this method to linearly constrained problems. Our modifications are not the result of a ``simple extension" of unconstrained  to constrained scenario.  Before explaining LC-TRACE, let us first provide an overview of the classical trust region and TRACE algorithms. 
	
	\subsection{Background on Traditional Trust Region Algorithm and TRACE} 
	In traditional trust region methods, the trial step $\bs_k$ at iteration $k$ is computed by solving the standard trust region  sub-problem
	\begin{equation}\label{sub-problem}
	\displaystyle{ \operatornamewithlimits{\mbox{min}}_{\bs \in \mathbb{R}^n }} \, \, q_k(\bs), \quad \mbox{s.t. } \|\bs\|_2 \leq \delta_k,
	\end{equation}
	where $q_k(\bs): \mathbb{R}^n \mapsto \mathbb{R}$ is the second-order Taylor  approximation of $f$ around $\bx_k$, i.e., 
	\[
	q_k(\bs) \triangleq f_k + \bg_k^T\bs + \dfrac{1}{2}\bs^T\bH_k \bs.
	\]
	Here $f_k = f(\bx_k)$, $\bg_k = \nabla f(\bx_k)$, and $\bH_k = \nabla^2 f(\bx_k)$. Based on the resulting trial step, an acceptance criteria is used to either \textit{accept} or \textit{reject} the step. In particular, if the ratio of actual-to-predicted reduction 
	\[ \dfrac{f_k - f(\bx_k + \bs_k)}{f_k - q_k(\bs_k)}\]
	is greater than a prescribed constant, the step is accepted, otherwise it is rejected. The iterate $\bx^{k+1}$ and trust region radius are updated accordingly. Traditional trust region methods use a geometric update rule for the trust region radius $\delta_k$, i.e., $\delta_{k+1}$ is some constant factor of $\delta_k$. TRACE algorithm, on the other hand, modifies the acceptance criteria and this linear update rule for $\delta_k$ to match the rate achieved by the ARC algorithm \cite{cartis2011adaptive, cartis2011adaptive-2}. In particular, the authors in \cite{curtis2017trust} observed that ARC computes a positive sequence of cubic regularization coefficients $\sigma_k \in [\underline{\sigma}, \,  \overline{\sigma}]$ that satisfy 
	\begin{equation}\label{eq:alg-conditions}
	f_k - f_{k+1} \geq c_1\sigma_{k}\|\bs_k\|_2^3 \quad \mbox{and} \quad \|\bs_k\|_2 \geq \Big(\dfrac{c_2}{\overline{\sigma} + c_3}\Big)^{1/2}\|\bg_{k+1}\|_2^{1/2},
	\end{equation} 
	for some given positive constants $c_1, c_2, c_3$. TRACE designed a modified acceptance criteria and a new mechanism for updating the trust region radius to satisfy the conditions provided in \eqref{eq:alg-conditions}. Some of these ideas are discussed next.\\

	\textbf{Sufficient Decrease Acceptance Criteria.}
	TRACE defines the ratio
	\begin{equation}\label{eq:rho-k}
	\rho_k \triangleq \dfrac{f_k - f(\bx_k + \bs_k)}{\|\bs_k\|_2^3},
	\end{equation}
	as a measure to decide whether to \textit{accept} or \textit{reject} a trial step. For some prescribed $\rho \in (0,1)$, a trial step $\bs_k$ can only be accepted if $\rho_k \geq \rho$. By noticing that a small $\|\bs_k\|_2$ may satisfy only the first condition in \eqref{eq:alg-conditions}, the developers of TRACE realize that an acceptance criteria that only involves \eqref{eq:rho-k} is not sufficient. To avoid such cases, TRACE defines a sequence $\{\sigma_k\}$ to estimate an upper bound for the ratio $\lambda_k/\|\bs_k\|_2$ used for acceptance. Here $\{\lambda_k\}$ is the sequence of dual variables corresponding to the constraint $\|\bs\|_2 \leq \delta_k$ in sub-problem~\eqref{sub-problem}. In short, TRACE accepts a trial pair $(\bs_k, \lambda_k)$ if it satisfies the following conditions:
	\begin{equation}\label{acceptance-criteria}
	\rho_k \geq \rho \quad \mbox{and} \quad  \lambda_k /\|\bs_k\|_2 \leq \sigma_k.
	\end{equation}

	\textbf{Trust Region Radius Update Procedure.} In contrast to the linear update rule utilized in traditional trust region algorithms, TRACE uses a CONTRACT subroutine that allows for sub-linear updates. In particular, this subroutine compares the radius obtained by the linear update scheme to the norm of the trial step computed using 
	\begin{equation}\label{reg-sub-problem}
	\displaystyle{ \operatornamewithlimits{\mbox{min}}_{\bs \in \mathbb{R}^n }} \, \, f_k + \bg_k^T\bs + \dfrac{1}{2}\bs^T(\bH_k + \lambda \bI)\bs,
	\end{equation}
	for a carefully chosen $\lambda$. If the norm of this trial step falls within a desired range, then it is chosen to be the new trust region radius. This subroutine is called at iteration $k$ if $\rho_k < \rho$.\\

	TRACE is designed to solve unconstrained smooth optimization problems. A direct implementation of this algorithm fails in the constrained setting. In the next section, we describe \textit{two fundamental difficulties} introduced in the presence of constraints and discuss the necessary modifications. 

	\subsection{Difference Between LC-TRACE and TRACE}\label{Section:Diff-Bw-Alg}
	In the constrained setting, we define the trust region sub-problem and its regularized Lagrangian form as
	\begin{equation}\label{sub-problem-cons}
	Q_k \triangleq \displaystyle{ \operatornamewithlimits{\mbox{min}}_{\bs \in \mathbb{R}^n }} \, \, q_k(\bs), \quad \mbox{s.t. } \begin{cases} 
	\bA\bs \leq \bb - \bA\bx_k\\
	\|\bs\|_2 \leq \delta_k
	\end{cases}
	\end{equation}
	and 
	\begin{equation}\label{reg-sub-problem-cons}
	Q_k(\lambda) \triangleq \displaystyle{ \operatornamewithlimits{\mbox{min}}_{\bs \in \mathbb{R}^n }} \, \, f_k + \bg_k^T\bs + \dfrac{1}{2}\bs^T(\bH_k + \lambda \bI)\bs, \quad \mbox{s.t. } 
	\bA\bs \leq \bb - \bA\bx_k.
	\end{equation}
	A major difficulty introduced by the constraints is related to the optimality conditions of the sub-problem. In the unconstrained case, it is known that $\bH_k + \lambda_k\bI \succeq 0$ at every iteration \cite[Corollary~7.2.2]{conn2000trust} for optimal Lagrange multiplier~$\lambda_k$. Along with the fact that $\lambda > \lambda_k$ in the CONTRACT subroutine of TRACE, we conclude that $Q_k({\lambda})$ is a strongly convex quadratic optimization problem which has a unique global minimizer. Let $\bs^{*}(\lambda)$ be the solution of $Q_k(\lambda)$. It follows that the function $\bs^{*}(\lambda)$ is continuous in $\lambda$ in the unconstrained scenario. However,
	in the linearly constrained scenario, the regularized sub-problem \eqref{reg-sub-problem-cons} might have multiple optimal solutions. Moreover, $\bs^*(\lambda)$ and the ratio $\lambda/\|\bs^{*}(\lambda)\|_2$, which are core quantities in TRACE, might not even be continuous. To clarify this difficulty, consider the following simple example
	\begin{equation}\label{example}
	Q(\lambda) = \underset{s_1 \leq 5, \, s_2 \geq 0 }{\min} \, \, s_1^2 - s_2^2 + \lambda(s_1^2 + s_2^2) \quad \mbox{s.t. } s_2 - 3s_1 \leq -12.
	\end{equation}
	It is not hard to see that the optimal solution of~\eqref{example} is given by
	\[ s^{*}(\lambda) = \begin{cases} \begin{array}{cc}
	(5,3) \quad  & \mbox{if } \lambda<0   \\
	(5,3); \, (4,0) \quad  & \mbox{if } \lambda=0 \\
	(4,0) \quad  & \mbox{Otherwise.}  
	\end{array}\end{cases}.\]
	Thus, a small increases in $\lambda$ may lead to a huge change in the ratio $\lambda / \|s^{*}(\lambda)\|_2$. Therefore, the luxury of having an arbitrarily choice for the bounds $\underline{\sigma}$ and $\bar{\sigma}$ of the ratio $\lambda/\|\bs\|_2$ is not present in the constrained case. In LC-TRACEC, we resolved this issue by defining 
	\begin{equation}\label{sigma-min-max}
	\underline{\sigma} = \dfrac{\epsilon}{C_{min} + \max\{\lambda_{max}, \lambda_0\}} \quad \mbox{and} \quad \bar{\sigma} = 2\Delta,
	\end{equation}
	and altering the update rule of $\lambda$ in the CONTRACT sub-routine. Here $\epsilon>0$ is the threshold used for the termination of the algorithm,  $C_{min}$ is defined in Lemma \ref{LXksk}, $\lambda_{max}$ is defined in Lemma \ref{LBG}, and $\Delta$ is defined in Lemma~\ref{L3.11}. \\
	
	Another major difficulty in the  constrained scenario is related to the standard trust region theory on the relationship between sub-problem solutions and their corresponding dual variables. In the unconstrained case,  $\lambda_1 > \lambda_2$, implies $\|\bs^{*}(\lambda_1)\|_2 < \|\bs^{*}(\lambda_1)\|_2$, see \cite[Chapter~7]{conn2000trust}. This relationship was used in \cite{curtis2017trust}, to show that CONTRACT subroutine reduces the radius of the trust region. However, it can be seen from example~\eqref{example}, that this relation may not hold in the constrained case. To account for this issue, we modified the CONTRACT sub-routine to guarantee a reduction in the trust region radius (See Lemma \ref{L3.4}). In summary, the differences between LC-TRACE and TRACE are mainly in the CONTRACT sub-routine. Next, we describe the steps of the algorithm.

	\subsection{Description of LC-TRACE}
	Our proposed algorithm LC-TRACE has two main building blocks: \textit{First-Order-LC-TRACE} and \textit{Second-Order-LC-TRACE}. We first present First-Order-LC-TRACE which can converge to $\epsilon_g$-first order stationarity in $\mathcal{\tilde{O}} (\epsilon_g^{-3/2})$. Then, we use this algorithm in Second-Order-LC-TRACE to find an $(\epsilon_g,\epsilon_H)$-Second Order stationarity in $\mathcal{\tilde{O}}(\max\{\epsilon_g^{-3/2},\epsilon_H^{-3}\})$ iterations.
	
	The First-Order-LC-TRACE algorithm is outlined in Algorithm~\ref{alg-LC}. At each iteration $\bx_k$,  this iterative algorithm computes the values $\bs_k$, $\blambda_k$, and $\rho_k$  by solving the optimization problem~\eqref{sub-problem-cons} and using the equation~\eqref{eq:rho-k}.  Depending on the obtained values, it decides to either \textit{accept} the trial point $\bs_k$, or reject it. When rejecting the trial point, it either goes to \textit{contraction} or \textit{expansion} procedures. Thus, the main decisions include: Acceptance, Contraction, or Expansion. We distinguish the iterations by partitioning the set of iteration numbers into what we refer to as the sets of accepted $({\cal A})$, contraction $({\cal C})$, and expansion $({\cal E})$ steps:
	\begin{flalign*}
	&{\cal A} \triangleq \{k \in \mathbb{N} : \rho_k \geq  \rho \mbox{ and either } \lambda_k \leq {\sigma}_k\|\bs_k\|_2 \mbox{ or }  \|\bs_k\|_2 = \Delta_k \}, \\
	&{\cal C} \triangleq \{k \in \mathbb{N}: \rho_k  < \rho\}, \mbox{ and}\\
	&{\cal E} \triangleq \{k \in \mathbb{N} : k \notin {\cal A} \, \cup \, {\cal C}\}.
	\end{flalign*}
	
	Hence, step $k$ is accepted if the computed pair $(\bs_k,\lambda_k)$ satisfies the sufficient decrease criteria $\rho_k \geq \rho$, and either the norm of $\bs_k$ is large enough $(\|\bs_k\|_2=\Delta_k)$ or the ratio $\lambda_k/\|\bs_k\|_2$ is smaller than an upper-bound $\sigma_k$. We also partition the set of accepted steps into two disjoint subsets
	\[{\cal A}_{\Delta} \triangleq \{k \in {\cal A} : \|\bs_k\|_2 = {\Delta}_k \} \mbox{ and } {\cal A}_{\sigma} \triangleq \{k \in {\cal A} : k \notin {\cal A}_{\Delta}\}.
	\]
	The sequence ${\Delta_k}$ is used in the algorithm as an upper bound on the norm of $\|\bs_k\|_2$. From steps~\ref{Delta-acc}, \ref{Delta-cont},  and \ref{Delta-exp}, we notice that this sequence is non-decreasing. We now describe the update mechanism used in a contraction step of First-Order-LC-TRACE which is the main difference between TRACE and our proposed algorithm.\\ 
	
	When CONTRACT subroutine is called, two different cases may occur in Algorithm~\ref{alg-contract}. The first case is reached whenever conditions in Step~\ref{Cond-lower-bound-no} in the CONTRACT subroutine tests true. In that case, we carefully choose choose $\lambda > \lambda_k$
	to ensure that the pair $(\bs, \lambda)$ with $\bs$ being the solution of $Q_k(\lambda)$ satisfies
	\begin{equation*}
	\underline{\sigma}\leq \lambda/\|\bs\|_2 \leq \overline{\sigma}, 
	\end{equation*}
	where $\underline{\sigma}$ and $\bar{\sigma}$ are prescribed positive constants defined in \eqref{sigma-min-max}. The second case is reached whenever the conditions in Step~\ref{Cond-lower-bound-no} tests false. In that case, we choose $\lambda \in (\lambda_k, C\lambda_k]$ with $C>1$ is a constant scalar, to ensure that the pair $(\bs, \lambda)$ with $\bs$ being the solution of $Q_k(\lambda)$ satisfies the following
	\begin{equation*}
	\dfrac{\lambda}{\|\bs\|_2} < \max\Bigg\{\bar{\sigma}, \Big(\dfrac{\gamma_{\lambda}}{\gamma_C}\Big)\dfrac{H_{Lip} + 2\rho}{2\kappa}\Bigg\},
	\end{equation*}
	where $\kappa \in (0,1]$ is a constant scalars, and $H_{Lip}$ is defined in assumption \ref{Assumption1-LC}. In what follows, we first present  our  results about the convergence of First-Order-LC-TRACE algorithm and its iteration complexity.
	
	\begin{algorithm}
		\caption{First-Order-LC-TRACE}\label{alg-LC}
		\textbf{Require:} an acceptance constant $\rho \in (0,1)$.\\
		\textbf{Require:} update constants $\{\gamma_C, \gamma_E, \gamma_{\lambda} \}$ with $\gamma_C \in (0,1)$ and $\gamma_{\lambda}, \gamma_E >1$.\\
		\textbf{Require:} ratio bound constants $\underline{\sigma}$ and $\overline{\sigma}$ defined in \eqref{sigma-min-max}.
		
		\hrulefill
		
		\begin{algorithmic}[1]
			
			\Procedure{First-Order-LC-TRACE}{}
			
			\State Choose a feasible point $\bx_0$,  a pair $\{\delta_0 , \Delta_0 \}$ with $0<\delta_0 \leq \Delta_0$, and $\sigma_0$ with $\sigma_0 \geq \underline{\sigma}$. \label{initial}
			
			\State Compute $(\bs_0 , \lambda_0)$ by solving $Q_0$, then compute $\rho_0$ using the definition in \eqref{eq:rho-k} \label{compute_s0}.
			
			\For{$k=0, 1, 2, \ldots$}
			
			\If{$\rho_k \geq \rho$ and either $\lambda_k/\|\bs_k\|_2 \leq  \sigma_k$ or $\|\bs_k\|_2=\Delta_k$} \label{Cond-acc} \hspace{0.7 cm}(Acceptance) 
			
			\State set $\bx_{k+1} \gets \bx_k + \bs_k$ \label{x-acc}
			
			\State set $\Delta_{k+1} \gets \max\{\Delta_k, \, \gamma_E\|\bs_k\|_2\}$ \label{Delta-acc}
			
			\State set $\delta_{k+1} \gets \min\{\Delta_{k+1}, \max\{\delta_k, \, \gamma_E \|\bs_k\|_2\}\}$ \label{delta-acc}
			
			\State set $\sigma_{k+1} \gets \max\{\sigma_k , \, \lambda_k/\|\bs_k\|_2 \}$ \label{sigma-acc}
			
			\ElsIf{$\rho_k<\rho$} \label{Cond-cont}\hspace{6. cm}(Contraction) 
			
			\State set $\bx_{k+1} \gets \bx_k$ \label{x-cont}
			
			\State set $\Delta_{k+1} \gets \Delta_k$ \label{Delta-cont}
			
			\State set $\delta_{k+1} \gets$ CONTRACT$(\bx_k, \delta_k, \sigma_k, \bs_k, \lambda_k)$ defined in Algorithm \eqref{alg-contract}\label{delta-cont}
			
			\ElsIf{$\rho_k \geq \rho$, $\lambda_k/\|\bs_k\|_2 > \sigma_k$, and $\|\bs_k\|_2<\Delta_k$} \label{Cond-exp} \hspace{1 cm}(Expansion) 
			
			\State set $\bx_{k+1} \gets \bx_k$ \label{x-exp}
			
			\State set $\Delta_{k+1} \gets \Delta_k$ \label{Delta-exp}
			
			\State set $\delta_{k+1} \gets \min \{\Delta_{k+1} ,\, \lambda_k /\sigma_k \}$ \label{delta-exp}
			
			\State set $\sigma_{k+1} \gets \sigma_k$ \label{sigma-exp}
			
			\EndIf
			
			\State Compute $(\bs_{k+1}, \lambda_{k+1})$ by solving $Q_{k+1}$, then compute $\rho_{k+1}$ using \eqref{eq:rho-k} \label{compute-sk}
			
			\If{$\rho_k < \rho$} \label{Cond-rho_k}
			
			\State set $\sigma_{k+1} \gets \max\{\sigma_k, \, \lambda_{k+1}/\|\bs_{k+1}\|_2 \}$ \label{Sigma-rho_k}
			
			\EndIf
			
			\EndFor
			\EndProcedure
		\end{algorithmic}
	\end{algorithm}


	\begin{algorithm}
		\caption{CONTRACT Sub-routine}\label{alg-contract}
		\textbf{Require:} update constant $\gamma_C \in (0,1)$.\\
		\textbf{Require:} ratio bound constants $\underline{\sigma}$ and $\overline{\sigma}$ defined in \eqref{sigma-min-max}.
		
		\hrulefill
		
		\begin{algorithmic}[1]
			
			\Procedure{CONTRACT$(\bx_k, \delta_k, \sigma_k, \bs_k, \lambda_k)$}{}
			\State set $\bar{\lambda} \gets \lambda_k + \underline{\sigma}\Delta_k$  and set $\bar{\bs}$ as the solution of $Q_k(\bar{\lambda})$.\label{lambda-update-sigma}
			\If{$\|\bar{\bs}\|_2 < \|\bs_k\|_2$ and $\lambda_k < \underline{\sigma}\|\bs_k\|_2$} \label{Cond-lower-bound-no}

			\State set $\lambda \gets \bar{\lambda} +  H_{max} +  \big(\underline{\sigma}{\cal X}_k\big)^{1/2}$ and set $\bs$ as the solution of $Q_k(\lambda)$. \label{lambda-update-H}
			
			\If{$\lambda/\|\bs\|_2 \leq \overline{\sigma}$} \label{Cond-upper-bound-yes}
			
			\State \textbf{return} $\delta_{k+1} \gets \|\bs\|_2$ \label{return-1}
			
			\Else \label{Cond-upper-bound-no} 
			
			\State set $\lambda \gets \bar{\lambda}$ \label{lambda-update-2}
			
			\State \textbf{return} $\delta_{k+1} \gets \|\bar{\bs}\|_2$ \label{return-2}
			
			\EndIf 
			
			\Else \label{Cond-lower-bound-yes}
			
			\If{$\|\bar{\bs}\|_2 = \|\bs_k\|_2$} \label{update-lambda-no-update-s}
			
			\State set $\lambda \gets \gamma_{\lambda}\bar{\lambda}$ and set $\bs$ as the solution of $Q_k(\lambda)$ \label{lambda-bar-update-linear}
			
			\Else \label{update-lambda-update-s}
			
			\State set $\lambda \gets \gamma_{\lambda}\lambda$ and set $\bs$ as the solution of $Q_k(\lambda)$ \label{lambda-update-linear}
			
			\EndIf

			\While{$\|\bs\|_2 = \|\bs_k\|_2$} \label{while-loop}
			
			\State $\lambda \gets \gamma_{\lambda}\lambda$ and set $\bs$ as the solution of $Q_k(\lambda)$ \label{lambda-update-linear-while}
			
			\EndWhile
			
			\If{ $\|\bs\|_2 \geq \gamma_C \|\bs_k\|_2$} \label{Cond-s-large}
			
			\State \textbf{return} $\delta_{k+1} \gets \|\bs\|_2$ \label{return-3}
			
			\Else \label{Cond-s-small}
			
			\State \textbf{return} $\delta_{k+1} \gets \gamma_C \|\bs_k\|_2$ \label{return-4}
			\EndIf
			\EndIf
			\EndProcedure
		\end{algorithmic}
	\end{algorithm}

	\subsection{Convergence of First-Order-LC-TRACE to First-order Stationarity}
	Throughout this section, we make the following assumptions that are standard for global convergence theory of trust region methods.
	\begin{assumption}\label{Assumption1-LC}
		The objective function $f$ is twice continuously differentiable and bounded below by a scalar $f_{min}$ on ${\cal P}$. We assume that the functions $\bg(\cdot) \triangleq \nabla f(\cdot)$ and $\bH(\cdot) \triangleq \nabla^2 f(\cdot)$  are Lipschitz continuous on the path defined by the iterates computed in the Algorithm, with Lipschitz constants $L$ and $H_{Lip}$, respectively. Furthermore, we assume the gradient sequence $\{\bg_k \}$ is bounded in norm, that is, there exists a scalar constant $g_{max} > 0$ such that $\|\bg_k\|_2 \triangleq \|\nabla f(\bx_k)\|_2 \leq g_{max}$ for all $k \in \mathbb{N}$. Moreover, we assume that the Hessian sequence $\{\bH_k\}$ is bounded in norm, that is, there exist a scalar constant $H_{max}>0$, such that  $\|H_k\|_2 \triangleq \|\nabla^2 f(\bx_k)\|_2 \leq H_{max}$ for all $k \in \mathbb{N}$.
	\end{assumption}

	We next state the main results for convergence of Frist-Order-LC-TRACE.
	
	\begin{theorem}\label{first-order-convergence}
		Under Assumption~\ref{Assumption1-LC}, any limit point of the iterates generated by First-Order-LC-TRACE algorithm is a first-order stationary point .
	\end{theorem}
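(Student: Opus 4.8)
The plan is to follow the classical global‑convergence template for trust‑region methods, isolating the three places where the linear constraints force new arguments. First I would establish that the objective sequence converges: on any non‑accepted iteration $\bx_{k+1}=\bx_k$, while on an accepted iteration $f_k-f(\bx_k+\bs_k)\ge\rho\|\bs_k\|_2^3\ge 0$ because $\rho_k\ge\rho$ together with the definition~\eqref{eq:rho-k} of $\rho_k$; hence $\{f_k\}$ is non‑increasing and, being bounded below by $f_{min}$ (Assumption~\ref{Assumption1-LC}), it converges, so that $\sum_{k\in{\cal A}}\|\bs_k\|_2^3<\infty$ and in particular $\|\bs_k\|_2\to 0$ as $k\to\infty$ through ${\cal A}$. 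The proof then splits according to whether ${\cal A}$ is infinite.

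\emph{Case $|{\cal A}|=\infty$.} Since $\{\Delta_k\}$ is non‑decreasing (steps~\ref{Delta-acc}, \ref{Delta-cont} and \ref{Delta-exp}) with $\Delta_k\ge\Delta_0\ge\delta_0>0$, every $k\in{\cal A}_\Delta$ has $\|\bs_k\|_2=\Delta_k\ge\delta_0$ and therefore yields $f_k-f_{k+1}\ge\rho\delta_0^3$; as $\{f_k\}$ converges, ${\cal A}_\Delta$ is finite, so all large $k\in{\cal A}$ lie in ${\cal A}_\sigma$ and satisfy $\lambda_k/\|\bs_k\|_2\le\sigma_k\le\bar{\sigma}$ (the uniform bound $\sigma_k\le\bar{\sigma}$ being one of the auxiliary lemmas, Lemma~\ref{L3.11}), whence $\lambda_k\le\bar{\sigma}\|\bs_k\|_2\to 0$. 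I would then write the KKT conditions of~\eqref{sub-problem-cons}: there exist a vector $\bm{\mu}_k\ge\bzero$ of multipliers for $\bA\bs\le\bb-\bA\bx_k$ and a scalar $\lambda_k\ge 0$ with $\bg_k+(\bH_k+\lambda_k\bI)\bs_k+\bA^T\bm{\mu}_k=\bzero$ and complementary slackness for the linear constraints. Plugging the minimizer $\bar\bs$ of~\eqref{eq:X_k} (which satisfies $\bA\bar\bs\le\bb-\bA\bx_k$ and $\|\bar\bs\|_2\le 1$) into this identity, using $\bm{\mu}_k^T\bA\bar\bs\le\bm{\mu}_k^T(\bb-\bA\bx_k)=\bm{\mu}_k^T\bA\bs_k$ and then eliminating $\bm{\mu}_k^T\bA\bs_k$ via the same identity dotted with $\bs_k$, gives, once $\|\bs_k\|_2\le 1$,
\[
\cX(\bx_k)\;\le\;-\bg_k^T\bs_k-\bs_k^T(\bH_k+\lambda_k\bI)(\bs_k-\bar\bs)\;\le\;\big(g_{max}+2H_{max}+2\lambda_k\big)\,\|\bs_k\|_2 .
\]
Since the right‑hand side tends to $0$ along ${\cal A}_\sigma$, and the distinct iterates are precisely $\{\bx_0\}\cup\{\bx_k+\bs_k:k\in{\cal A}\}$ with $\bs_k\to\bzero$ over ${\cal A}$, every limit point of $\{\bx_k\}$ is the limit of a subsequence $\{\bx_{k_j}+\bs_{k_j}\}$ with $k_j\in{\cal A}_\sigma$, hence also of $\{\bx_{k_j}\}$; the continuity of $\cX(\cdot)$ from Lemma~\ref{lem:StationarityContinuous} then gives $\cX=0$ there, i.e.\ a first‑order stationary point.

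\emph{Case $|{\cal A}|<\infty$.} Let $\bar k$ be the last accepted index, so $\bx_k\equiv\bar\bx$ for $k>\bar k$ and each such step is a contraction or an expansion; suppose for contradiction $\cX(\bar\bx)>0$. The key observation is that after any contraction $k\in{\cal C}$, Step~\ref{Sigma-rho_k} forces $\sigma_{k+1}\ge\lambda_{k+1}/\|\bs_{k+1}\|_2$, so iteration $k+1$ already meets the ratio part of the acceptance test and is accepted as soon as $\rho_{k+1}\ge\rho$; since no step past $\bar k$ is accepted, $\rho_{k+1}<\rho$, i.e.\ $k+1\in{\cal C}$. Hence once a contraction occurs past $\bar k$ the whole tail consists of contractions, and the re‑engineered CONTRACT subroutine (Lemma~\ref{L3.4}) makes $\delta_k$ strictly decrease, so $\delta_k\downarrow 0$; but a Cauchy‑type estimate at the non‑stationary $\bar\bx$ — the minimizer of~\eqref{eq:X_k} providing a feasible descent direction, together with the Lipschitz‑Hessian bound $|f(\bar\bx+\bs_k)-q_k(\bs_k)|\le\tfrac{H_{Lip}}{6}\|\bs_k\|_2^3$ — forces $\rho_k\to\infty$, contradicting $\rho_k<\rho$. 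It remains to exclude a tail consisting only of expansions; there $\|\bs_k\|_2=\delta_k<\Delta_k$ and the expansion update in step~\ref{delta-exp} strictly increases $\delta_k$ toward the (now constant) $\Delta_k$, and once $\|\bs_k\|_2=\Delta_k$ the step is accepted, again a contradiction. Therefore $\cX(\bar\bx)=0$, and $\bar\bx$ is first‑order stationary by Lemma~\ref{lem:StationarityContinuous}.

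\emph{Expected main obstacle.} The delicate part is the finitely‑many‑accepted case and the boundedness lemmas it invokes. In the unconstrained TRACE analysis one exploits the monotone dependence of $\|\bs^{*}(\lambda)\|_2$ and $\lambda/\|\bs^{*}(\lambda)\|_2$ on $\lambda$; example~\eqref{example} shows this fails under linear constraints, so the argument must instead rest on the modified CONTRACT subroutine guaranteeing a radius reduction (Lemma~\ref{L3.4}) and on the uniform bounds $\sigma_k\le\bar{\sigma}$ and $\lambda_k\le\lambda_{max}$ (Lemmas~\ref{L3.11} and~\ref{LBG}), whose proofs — verifying that the new choice of $\lambda$ keeps $\lambda/\|\bs\|_2$ inside $[\underline{\sigma},\bar{\sigma}]$ in the favorable branch of CONTRACT and contracts $\delta_k$ otherwise — carry essentially all the constraint‑specific work. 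A secondary technical point is making the implication ``$\delta_k\to 0\Rightarrow\rho_k\ge\rho$'' rigorous when the subproblem solution is interior to the trust region (so $\lambda_k=0$), which is handled by comparing $\bs_k$ with the minimizer of $q_k$ over the polytope with inactive ball constraint.
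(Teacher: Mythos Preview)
Your overall architecture matches the paper's, but two steps are not justified as written.

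In the $|{\cal A}|=\infty$ branch you invoke $\sigma_k\le\bar\sigma$ and attribute it to Lemma~\ref{L3.11}. That lemma bounds only $\Delta_k$ and $\|\bs_k\|_2$; the $\sigma_k$ bound is Lemma~\ref{L3.18}, whose proof (via Lemma~\ref{L3.17}) requires Assumption~\ref{Assumption 3}, which is \emph{not} a hypothesis of this theorem. The repair is easy: you do not actually need $\lambda_k\to 0$, only $\lambda_k$ uniformly bounded, and Lemma~\ref{LBG} gives $\lambda_k\le\max\{\lambda_0,\lambda_{max}\}$ under Assumption~\ref{Assumption1-LC} alone; your own inequality $\cX(\bx_k)\le(g_{max}+2H_{max}+2\lambda_k)\|\bs_k\|_2$ then already forces $\cX_k\to 0$ along ${\cal A}$. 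This is precisely the content of the paper's Lemma~\ref{LXksk}.

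In the $|{\cal A}|<\infty$ branch the genuine gap is the implication ``$\delta_k$ strictly decreases, so $\delta_k\downarrow 0$.'' Strict decrease gives only $\delta_k\to\delta_\infty\ge 0$, and nothing you wrote excludes $\delta_\infty>0$; your Cauchy argument for $\rho_k\to\infty$ presupposes $\|\bs_k\|_2\to 0$ and therefore cannot itself close this gap. The paper handles it in Lemma~\ref{L3.9} by a dichotomy inside CONTRACT: either Step~\ref{return-4} (the geometric contraction $\delta_{k+1}=\gamma_C\|\bs_k\|_2$) is reached infinitely often, forcing $\delta_k\to 0$; or eventually only Steps~\ref{return-1}, \ref{return-2}, \ref{return-3} are reached, in which case the explicit $\lambda$ updates give $\lambda_{k+1}\ge\min\{\lambda_k+\underline\sigma\Delta_k,\gamma_\lambda\lambda_k\}$, hence $\lambda_k\to\infty$, and Lemma~\ref{L3.8} then yields $\rho_k\ge\rho$, contradicting $k\in{\cal C}$. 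Your exclusion of an expansion tail is also incomplete: reaching $\|\bs_k\|_2=\Delta_k$ puts the iteration in ${\cal A}_\Delta$ only when $\rho_k\ge\rho$; if $\rho_k<\rho$ it lands in ${\cal C}$. The paper instead uses Lemma~\ref{L3.7} (if $k\in{\cal C}\cup{\cal E}$ then $k{+}1\notin{\cal E}$) to conclude that no two consecutive expansions occur, so the tail past $\bar k$ is eventually pure ${\cal C}$.
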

	\begin{proof}
		The proof of the Theorem is relegated to Appendix~\ref{first-order-convergence-app}.
	\end{proof}
	
	Unfortunately, Assumption~\ref{Assumption1-LC} is not sufficient to obtain the desired rate of convergence in the presence of constraints; in particular,
	Assumption \ref{Assumption1-LC} may not ensure a model decrease of the form
	\begin{equation}\label{eq:Model_Suff_Decrease}
	f_k - q_k(\bs_k) = -\bg_k^T\bs_k - \dfrac{1}{2}\bs_k^T\bH_k \bs_k \geq \kappa\lambda_k \|\bs_k\|_2^2,
	\end{equation}
	for some constant $\kappa \in (0,1)$. To understand this, let us first review the same result for the unconstrained scenario: it is known that $\bH_k + \lambda_k\bI \succeq 0$ at every iteration \cite[Corollary~7.2.2]{conn2000trust}. Thus, by Lemma \ref{L3.3}, we get
	\begin{equation}\label{eq:easy-case}
	f_k - q_k(\bs_k) = -\bg_k^T\bs_k - \dfrac{1}{2}\bs_k^T\bH_k \bs_k \geq \dfrac{1}{2}\lambda_k \|\bs_k\|_2^2.
	\end{equation}
	
	However, in contrast to the unconstrained case, there is no guarantee that the step $\bs_k$ satisfies \eqref{eq:Model_Suff_Decrease} in the constrained scenario. More specifically, in the presence of constraints, the condition is not guaranteed when the step $\bs_k$ provides  ascent first-order direction with negative curvature. To account for this case, we assume the following assumption holds.
	
	
	\begin{assumption}\label{Assumption 3}
		If $\bg_k^T\bs_k \geq 0$ and $\bs_k^T\bH_k\bs_k \leq 0$, there exists a sequence of feasible points $\{\bx_{k,i}\}_{i=0}^{l_k}$ with $0 \leq l_k \leq \bar{l}$, $\bx_{k,0}=\bx_k$, $\bs_{k,i}=\bx_{k,i}-\bx_k$ and $\bx_{k,l_k} = \bx_k + \bs_k$ such that for $i = 1,\ldots, l_k$,
		\[\arraycolsep=1pt\def\arraystretch{1.4}
		\begin{array}{l}
		q_k(\bs_{k,i}) \leq q_k(\bs_{k,i-1});\\ 
		\bg_k^T(\bx_{k,i} - \bx_{k,i-1}) + \bs_{k,i}^T\bH_k(\bx_{k,i} - \bx_{k,i-1}) \leq -\lambda_k \bs_{k,i}^T(\bx_{k,i} - \bx_{k,i-1});\\  
		\bg_k^T(\bx_{k,i} - \bx_{k,i-1}) + \bs_{k,i-1}^T\bH_k(\bx_{k,i} - \bx_{k,i-1}) \leq -\lambda_k \bs_{k,i-1}^T(\bx_{k,i} - \bx_{k,i-1}).
		\end{array}\]
	\end{assumption}
	
	This assumption was also used in \cite{cartis2012adaptive} to show that the number of iterations required to reach an $\epsilon$-first order stationary point when adaptive ARC algorithm is used is ${\cal O}(\epsilon^{-3/2})$. As mentioned in \cite{cartis2012adaptive}, this assumptions holds if $\bx_{k,l_k}$ is the first minimizer of the model $q_{\lambda_k}$ along the piecewise linear path ${\cal P}_k \triangleq \bigcup\limits_{i=1}^{l_k}[\bx_{k,i-1}, \bx_i].$
	Using Assumption~\ref{Assumption 3}, we obtain the desired model decrease \eqref{eq:Model_Suff_Decrease} and we have the following Theorem. 
	
	\begin{theorem}\label{epsilon-first-order-convergence-complexity}
		Under Assumptions \ref{Assumption1-LC} and \ref{Assumption 3}, for any given scalar $\epsilon \in (0,\infty)$, the total number of sub-problem routines of First-Order-LC-TRACE required to reach an $\epsilon$-first order stationary point of~\eqref{Linearly-Cons-Prob} is ${{\cal O}}(\epsilon^{-3/2} \log^3(1/\epsilon))$.
	\end{theorem}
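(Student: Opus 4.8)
The plan is to mirror the TRACE complexity analysis~\cite{curtis2017trust}: partition $\mathbb{N}$ into the accepted, contraction, and expansion index sets $\mathcal{A},\mathcal{C},\mathcal{E}$ defined above; bound $|\mathcal{A}|$ by a sufficient-decrease argument; then bound $|\mathcal{C}|$ and $|\mathcal{E}|$ in terms of $|\mathcal{A}|$ and of how far $\delta_k$ can shrink; and finally account for the extra subproblem solves performed inside each CONTRACT call. Throughout, wherever the unconstrained argument uses continuity or monotonicity of $\bs^*(\lambda)$, I would substitute the estimates guaranteed by the modified CONTRACT routine and by the choice of $\underline{\sigma},\overline{\sigma}$ in~\eqref{sigma-min-max}.

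\textbf{Bounding $|\mathcal{A}|$.} For $k\in\mathcal{A}$ the acceptance test gives $f_k-f_{k+1}\ge\rho\|\bs_k\|_2^3$; summing over $\mathcal{A}$ and using $f\ge f_{min}$ (Assumption~\ref{Assumption1-LC}) yields $\sum_{k\in\mathcal{A}}\|\bs_k\|_2^3\le(f_0-f_{min})/\rho$. Because $\Delta_k$ changes only on accepted steps and increases only when $\gamma_E\|\bs_k\|_2>\Delta_k\ge\Delta_0$, this summability bounds the number of increases, so $\Delta_k\le\Delta_{max}$ for a constant $\Delta_{max}$; the updates together with~\eqref{sigma-min-max} then keep $\sigma_k\le\overline{\sigma}=2\Delta$. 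Now split $\mathcal{A}=\mathcal{A}_\Delta\cup\mathcal{A}_\sigma$. For $k\in\mathcal{A}_\Delta$ we have $\|\bs_k\|_2=\Delta_k\ge\Delta_0$, hence $|\mathcal{A}_\Delta|=\mathcal{O}(1)$. For $k\in\mathcal{A}_\sigma$ we have $\lambda_k/\|\bs_k\|_2\le\sigma_k\le\overline{\sigma}$; using Assumption~\ref{Assumption 3} to get the model decrease~\eqref{eq:Model_Suff_Decrease}, the Lipschitz continuity of $\bg$ and $\bH$ along the path, and a Cauchy-type step bound obtained by moving feasibly along the direction attaining $\mathcal{X}_k$ in~\eqref{eq:X_k}, one derives an ARC-type inequality $(c_1\overline{\sigma}+c_2)\|\bs_k\|_2^2\ge\mathcal{X}_{k+1}$ (cf.~\eqref{eq:alg-conditions}). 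Since $\mathcal{X}_{k+1}>\epsilon$ before termination, $\|\bs_k\|_2=\Omega(\sqrt{\epsilon})$, and the cubic summability forces $|\mathcal{A}_\sigma|=\mathcal{O}(\epsilon^{-3/2})$. Thus $|\mathcal{A}|=\mathcal{O}(\epsilon^{-3/2})$.

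\textbf{Bounding $|\mathcal{C}|,|\mathcal{E}|$ and the inner solves.} A contraction is entered only when $\rho_k<\rho$; combining this with $|f(\bx_k+\bs_k)-q_k(\bs_k)|\le\tfrac{H_{Lip}}{6}\|\bs_k\|_2^3$ and the Cauchy bound shows that, while $\mathcal{X}_k>\epsilon$, both $\|\bs_k\|_2$ and the radius $\delta_{k+1}$ returned by CONTRACT stay above a threshold $\delta_{min}$ that is a fixed power of $\epsilon$. Since $\Delta_k$ is constant between consecutive accepted steps, within each of the $\mathcal{O}(\epsilon^{-3/2})$ inter-acceptance epochs $\delta_k$ lives in $[\delta_{min},\Delta_{max}]$ and is multiplied by at most $\gamma_C<1$ on a contraction and by at least a fixed factor $>1$ on an expansion, so a standard potential argument bounds the number of contraction and expansion steps per epoch by $\mathcal{O}(\log(1/\epsilon))$. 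Moreover each CONTRACT call performs only $\mathcal{O}(\log(1/\epsilon))$ subproblem solves in its inner loop and $\lambda\gets\gamma_\lambda\lambda$ updates, since $\lambda$ starts at $\bar\lambda=\lambda_k+\underline{\sigma}\Delta_k=\Omega(\epsilon)$ and is capped by the constant $\lambda_{max}$. Multiplying $\mathcal{O}(\epsilon^{-3/2})$ epochs by these logarithmic per-epoch and per-call costs, and adding the $|\mathcal{A}|$ accepted-step solves, yields $\mathcal{O}(\epsilon^{-3/2}\log^3(1/\epsilon))$ subproblem routines in total.

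\textbf{Main obstacle.} The delicate part is the claim in the previous paragraph that CONTRACT both strictly reduces the trust-region radius and never lets $\delta_k$ fall below a $\mathrm{poly}(1/\epsilon)$ threshold. In the unconstrained setting this follows from the continuity and monotonicity of $\bs^*(\lambda)$, but example~\eqref{example} shows $\bs^*(\lambda)$ and $\lambda/\|\bs^*(\lambda)\|_2$ can be discontinuous and non-monotone; re-establishing both properties for the modified CONTRACT routine, with only a logarithmic loss in $\delta_{min}$, is the technical heart of the argument (Lemmas~\ref{L3.4} and~\ref{L3.11}), and it is exactly what dictates the choice of $\underline{\sigma},\overline{\sigma}$ in~\eqref{sigma-min-max}. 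Once that, the model decrease~\eqref{eq:Model_Suff_Decrease}, and the uniform bounds $\Delta_k\le\Delta_{max}$, $\sigma_k\le\overline{\sigma}$ are in place, the remainder is a routine adaptation of the unconstrained counting argument.
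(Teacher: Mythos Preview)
Your high-level decomposition matches the paper's: bound $|\mathcal{A}_\sigma|=\mathcal{O}(\epsilon^{-3/2})$ via a lower bound $\|\bs_k\|_2\gtrsim\mathcal{X}_{k+1}^{1/2}$, bound $|\mathcal{A}_\Delta|=\mathcal{O}(1)$, then count contraction/expansion steps and inner CONTRACT solves per acceptance epoch. However, the per-epoch contraction count contains a genuine gap.

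\textbf{The potential argument on $\delta_k$ does not work.} You assert that on a contraction $\delta_k$ is multiplied by at most $\gamma_C$, but this is true only when CONTRACT returns through Step~\ref{return-4}. Returns through Steps~\ref{return-1}, \ref{return-2}, \ref{return-3} guarantee merely $\delta_{k+1}<\delta_k$ (Lemma~\ref{L3.4}), with no geometric shrinkage. (Likewise, an expansion does not enlarge $\delta_k$ by a fixed factor; the paper simply uses that at most one expansion occurs per epoch, Lemma~\ref{L3.22}.) The paper therefore runs a \emph{two-level} count (Lemma~\ref{L3.24}): first, the number of Step-\ref{return-4} visits per epoch is $\mathcal{O}(\log(1/\epsilon))$ by tracking $\delta_k\in[\epsilon/(C_{min}+\max\{\lambda_{max},\lambda_0\}),\Delta]$; second, between two such visits only Steps~\ref{return-1}--\ref{return-3} occur, for which Lemma~\ref{L3.17} gives the lower bound $\lambda_{k+1}/\|\bs_{k+1}\|_2\ge\underline{\sigma}$ and Lemma~\ref{L3.23} shows this ratio grows by a factor $\gamma_\lambda$ at each Step-\ref{return-3}, while Lemma~\ref{L3.18} caps it at $\sigma_{max}$. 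Since $\underline{\sigma}=\Theta(\epsilon)$ by~\eqref{sigma-min-max}, this second count is again $\mathcal{O}(\log(1/\epsilon))$, so $K_{\mathcal C}=\mathcal{O}(\log^2(1/\epsilon))$ per epoch --- one log factor more than your sketch. Multiplying by the $\mathcal{O}(\log(1/\epsilon))$ inner solves per CONTRACT call (Lemma~\ref{K_C^1}) gives the $\log^3$ in the theorem; your argument, if it worked, would yield only $\log^2$.

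\textbf{Two smaller inaccuracies.} The claim that ``the updates together with~\eqref{sigma-min-max} keep $\sigma_k\le\overline{\sigma}$'' is not automatic: after $k\in\mathcal{C}$, Step~\ref{Sigma-rho_k} sets $\sigma_{k+1}\ge\lambda_{k+1}/\|\bs_{k+1}\|_2$, and bounding this ratio is exactly the content of Lemma~\ref{L3.17}, which needs the model decrease $f_k-q_k(\bs_k)\ge\kappa\lambda_k\|\bs_k\|_2^2$ from Assumption~\ref{Assumption 3} (Lemma~\ref{L4.6_ACR}); the resulting bound is $\sigma_{max}=\max\{\overline{\sigma},(\gamma_\lambda/\gamma_C)(H_{Lip}+2\rho)/(2\kappa)\}$, not $\overline{\sigma}$. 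Also, the step-size inequality $\|\bs_k\|_2^2\gtrsim\mathcal{X}_{k+1}$ for $k\in\mathcal{A}_\sigma$ is \emph{not} obtained in the paper from a Cauchy step plus model decrease: the constrained KKT system~\eqref{CC1} carries an extra term $\bA^T\blambda_k^C$, and Lemma~\ref{L3.19} disposes of it via the active-set observation that $\bd^T\bA^T\blambda_k^C\le 0$ for every $\bd\in\mathcal{D}_{k+1}$, independently of Assumption~\ref{Assumption 3}.
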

	
	\begin{proof}
		The proof of the Theorem is relegated to Appendix~\ref{epsilon-first-order-convergence-complexity-app}.
	\end{proof}
	
	In the next section, we use this first order result to develop an algorithm for finding second order stationary points.
	
	\vspace{0.3cm}
	
	\section{Second-Order-LC-TRACE Algorithm}
	Leveraging the convergence result of First-Order-LC-TRACE, we propose algorithm~\ref{alg-GLC-cap} for converging to second order stationary points.

	\begin{algorithm}[H]
		\caption{Second-Order-LC-TRACE}\label{alg-GLC-cap}
		\textbf{Require:} The constants $\tilde{L}\triangleq \max\{L, g_{max}\}$, $\tilde{H} \triangleq \max\{H_{Lip}, H_{max}\}$, $\epsilon_g >0$, $\epsilon_H >0$.
		
		\hrulefill
		
		\begin{algorithmic}[1] \label{alg-GLC}
			\Procedure{}{}
			\State Choose a feasible point $\bx_0$.
			\State Compute $\cX_0$ and $\cpsi_0$ by solving  (\ref{eq:X_k}) and (\ref{eq:psi_k}), respectively.
			
			\For{$k=0, 1, 2, \ldots$} 
			
			\If{$\cX_k > \epsilon_g$}\label{alg-first-order-not-reached}
			\State Compute $\bx_{k+1}$ by running one iteration of First-Order-LC-TRACE starting with $\bx_k$.\label{alg-use-LCTRACE}
			\Else \label{alg-first-order-reached} 
			\State Compute $\widehat{\bd}_k$ and $\mathcal{\psi}_k$ by solving \eqref{eq:psi_k}.\label{alg-compute-d}
			\State set $\bx_{k+1} \gets \bx_k + \dfrac{2\cpsi_k}{\tilde{H}}\widehat{\bd}_k$. \label{alg-second-order}
			\EndIf
			\EndFor
			\EndProcedure
		\end{algorithmic}
	\end{algorithm}

	We now show that this algorithm can find an $(\epsilon_g, \epsilon_H)$-second-order stationary point of problem~\eqref{Linearly-Cons-Prob}.
	
	\begin{theorem}\label{epsilon-second-order-convergence-complexity}
		Under Assumptions \ref{Assumption1-LC} and \ref{Assumption 3}, for any given scalars $\epsilon_g>0$ and $\epsilon_H>0$, the total number of iterations required to reach an $(\epsilon_g, \epsilon_H)$-second-order stationary point of~\eqref{Linearly-Cons-Prob} when running Algorithm~\ref{alg-GLC-cap} is ${{\cal O}}\Big(\log^3(\epsilon_g^{-1})\max\big\{\epsilon_g^{-3/2} , \epsilon_H^{-3}\big\}\Big)$.
	\end{theorem}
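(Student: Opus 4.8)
The plan is to combine the first-order complexity bound of Theorem~\ref{epsilon-first-order-convergence-complexity} with a ``sufficient decrease per second-order step'' estimate, and to bound how many times each type of step can be taken before the termination criterion $\cX_k \leq \epsilon_g$ and $\cpsi_k \leq \epsilon_H$ is met. Observe that Algorithm~\ref{alg-GLC-cap} only does one of two things at iteration $k$: if $\cX_k > \epsilon_g$ it runs a single iteration of First-Order-LC-TRACE (call these \emph{Type~I} iterations), and otherwise it takes the negative-curvature step $\bx_{k+1} \gets \bx_k + (2\cpsi_k/\tilde H)\widehat{\bd}_k$ (call these \emph{Type~II} iterations). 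The key point is that the algorithm terminates as soon as it hits a Type~II iteration at which $\cpsi_k \leq \epsilon_H$, so every Type~II iteration that is actually executed (except possibly the last) has $\cpsi_k > \epsilon_H$; and every Type~I iteration has $\cX_k > \epsilon_g$.

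First I would establish the per-step decrease on Type~II iterations. Since $\widehat{\bd}_k$ is the minimizer in \eqref{eq:psi_k}, it is feasible ($\bx_k + \widehat{\bd}_k \in {\cal P}$), has $\|\widehat{\bd}_k\| \le 1$, satisfies $\langle \bg_k, \widehat{\bd}_k\rangle \le 0$, and $\widehat{\bd}_k^T \bH_k \widehat{\bd}_k = -\cpsi_k$. Take the point $\bx_k + t\widehat{\bd}_k$ with $t = 2\cpsi_k/\tilde H \le 1$ (which holds because $\cpsi_k \le H_{max}\|\widehat\bd_k\|^2 \le \tilde H/2$); this point is feasible by convexity of ${\cal P}$ and $t\in[0,1]$. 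Using the Lipschitz bound on $\bH$ (constant $H_{Lip} \le \tilde H$) and a standard second-order Taylor estimate,
\begin{equation}\label{eq:TypeIIdecrease}
f(\bx_k + t\widehat{\bd}_k) \le f_k + t\langle \bg_k, \widehat{\bd}_k\rangle + \frac{t^2}{2}\widehat{\bd}_k^T\bH_k\widehat{\bd}_k + \frac{\tilde H}{6}t^3\|\widehat{\bd}_k\|^3 \le f_k - \frac{t^2}{2}\cpsi_k + \frac{\tilde H}{6}t^3.
\end{equation}
Substituting $t = 2\cpsi_k/\tilde H$ gives $f_k - f_{k+1} \ge \frac{2}{3}\cpsi_k^3/\tilde H^2$, so as long as $\cpsi_k > \epsilon_H$ each Type~II step decreases $f$ by at least a constant multiple of $\epsilon_H^3$. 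Since $f$ is bounded below by $f_{min}$ and is never increased by a Type~I step beyond a controlled amount (the objective is monotone nonincreasing along accepted First-Order-LC-TRACE steps, and the rejected ones leave $\bx_k$ unchanged), the total number of Type~II iterations is at most ${\cal O}\big((f(\bx_0)-f_{min})\tilde H^2 \epsilon_H^{-3}\big) = {\cal O}(\epsilon_H^{-3})$.

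Next I would bound the number of Type~I iterations. These are exactly the iterations on which First-Order-LC-TRACE is invoked; they do not run to completion all at once, but are interleaved with Type~II steps. The subtlety is that a Type~II step moves $\bx_k$ and so ``restarts'' the first-order subroutine from a new point. However, each Type~II step decreases $f$, and Theorem~\ref{epsilon-first-order-convergence-complexity} gives a complexity bound of the form $C(f(\bx_{\text{start}}) - f_{min})\,\epsilon_g^{-3/2}\log^3(1/\epsilon_g)$ on the number of First-Order-LC-TRACE sub-problem solves needed to drive $\cX$ below $\epsilon_g$ from a given starting iterate (the bound is additive in the function-value gap). Because the function-value gaps across all the ``first-order segments'' telescope — each Type~II step only consumes part of the budget and the total decrease over the whole run is at most $f(\bx_0) - f_{min}$ — the total number of Type~I iterations is also ${\cal O}\big(\epsilon_g^{-3/2}\log^3(\epsilon_g^{-1})\big)$. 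Summing the two contributions yields a total iteration count of ${\cal O}\big(\log^3(\epsilon_g^{-1})\max\{\epsilon_g^{-3/2}, \epsilon_H^{-3}\}\big)$, as claimed.

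The main obstacle I anticipate is making the interleaving argument in the last paragraph fully rigorous: one must show that splitting the run of First-Order-LC-TRACE into many segments (each interrupted by a Type~II step) does not blow up the complexity — i.e., that the per-segment overhead (re-initializing $\delta_0, \Delta_0, \sigma_0$, the transient non-accepted steps at the start of a segment) is either absent or absorbable, and that the $\log^3$ factor is not multiplied by the number of segments. This requires either re-examining the proof of Theorem~\ref{epsilon-first-order-convergence-complexity} to confirm it tolerates warm/cold restarts with the same constants, or arguing that Type~II steps are rare enough (${\cal O}(\epsilon_H^{-3})$ of them) that even a bounded additive overhead per segment is dominated by the stated bound. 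The decrease estimate \eqref{eq:TypeIIdecrease} and the boundedness of $f$ are routine; the bookkeeping that ties the two step-types together under a single telescoping potential $f(\bx_k)$ is where the care is needed.
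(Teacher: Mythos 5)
Your overall strategy is the same as the paper's: split iterations into first-order steps (your Type~I) and negative-curvature steps (your Type~II), prove a per-step sufficient decrease for each, and telescope the single potential $f(\bx_k)\ge f_{min}$. Your Type~II decrease bound $f_k - f_{k+1} \ge \tfrac{2}{3}\cpsi_k^3/\tilde H^2$ is exactly the paper's inequality \eqref{reduction-d-LC}, derived the same way.

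The one place where you stop short is precisely where the paper makes its key move, and the paper's resolution is worth internalizing: it does \emph{not} invoke Theorem~\ref{epsilon-first-order-convergence-complexity} as a black box on each ``segment'' between negative-curvature steps, so the restart/warm-start worry you raise never arises. Instead, it goes back to the per-accepted-step decrease of Lemmas~\ref{L3.19}--\ref{L3.20}, namely $f_{k-1}-f_k \ge \rho(H_{Lip}+\sigma_{max})^{-3/2}\cX_k^{3/2}$ for $k$ following an ${\cal A}_\sigma$-step with $\cX_k>\epsilon_g$, and forms a \emph{single} sum over the union ${\cal K}_{\epsilon}\cup{\cal V}$ of both step types, each term bounded below by $\min\bigl\{\rho(H_{Lip}+\sigma_{max})^{-3/2}\cX_k^{3/2},\ 2\cpsi_k^3/(3\tilde H^2)\bigr\}$. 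This bounds $|{\cal K}_\epsilon\cup{\cal V}|$ by ${\cal O}(\max\{\epsilon_g^{-3/2},\epsilon_H^{-3}\})$ directly. The $\log^3$ factor then enters multiplicatively via the contraction/expansion overhead $(1+K_{\cal C}K_{\cal C}^1)$ of Lemmas~\ref{L3.24} and~\ref{K_C^1}, which is charged per accepted step rather than per segment --- exactly the accounting you suspected was needed but did not carry out. Two minor points: your justification that $t=2\cpsi_k/\tilde H\le 1$ (needed for feasibility of $\bx_k+t\widehat\bd_k$ by convexity) relies on $\cpsi_k\le \tilde H/2$, which does not follow from $\cpsi_k\le H_{max}\le\tilde H$ alone; the paper glosses over this same point, so you are in good company, but the claim as you wrote it is not justified. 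Also note that the decrease for Type~I steps holds only for \emph{accepted} steps in ${\cal A}_\sigma$, not for every invocation of First-Order-LC-TRACE; the contraction/expansion iterations in between contribute no decrease and must be counted separately, which is what the $(1+K_{\cal C}K_{\cal C}^1)$ factor does.
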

	
	\begin{proof}
		The proof of the Theorem is relegated to Appendix~\ref{epsilon-second-order-convergence-complexity-app}.
	\end{proof}

	\newpage
	{
		\bibliography{references}

\begin{thebibliography}{10}

\bibitem{anandkumar2016efficient}
A.~Anandkumar and R.~Ge.
\newblock Efficient approaches for escaping higher order saddle points in
  non-convex optimization.
\newblock In {\em Conference on Learning Theory}, pages 81--102, 2016.

\bibitem{bandeira2016low}
A.~S. Bandeira, N.~Boumal, and V.~Voroninski.
\newblock On the low-rank approach for semidefinite programs arising in
  synchronization and community detection.
\newblock In {\em Conference on Learning Theory}, pages 361--382, 2016.

\bibitem{barazandeh2018behavior}
B.~Barazandeh and M.~Razaviyayn.
\newblock On the behavior of the expectation-maximization algorithm for mixture
  models.
\newblock {\em arXiv preprint arXiv:1809.08705}, 2018.

\bibitem{bertsekas1999nonlinear}
D.~P. Bertsekas.
\newblock {\em Nonlinear programming}.
\newblock Athena scientific Belmont, 1999.

\bibitem{bian2015complexity}
W.~Bian, X.~Chen, and Y.~Ye.
\newblock Complexity analysis of interior point algorithms for non-lipschitz
  and nonconvex minimization.
\newblock {\em Mathematical Programming}, 149(1-2):301--327, 2015.

\bibitem{birgin2018regularization}
E.~Birgin and J.~Mart{\'\i}nez.
\newblock On regularization and active-set methods with complexity for
  constrained optimization.
\newblock {\em SIAM Journal on Optimization}, 28(2):1367--1395, 2018.

\bibitem{boumal2016nonconvex}
N.~Boumal.
\newblock Nonconvex phase synchronization.
\newblock {\em SIAM Journal on Optimization}, 26(4):2355--2377, 2016.

\bibitem{cartis2012adaptive}
C.~Cartis, N.~Gould, and P.~L. Toint.
\newblock An adaptive cubic regularization algorithm for nonconvex optimization
  with convex constraints and its function-evaluation complexity.
\newblock {\em IMA Journal of Numerical Analysis}, 32(4):1662--1695, 2012.

\bibitem{cartis2011adaptive}
C.~Cartis, N.~I. Gould, and P.~L. Toint.
\newblock Adaptive cubic regularisation methods for unconstrained optimization.
  part i: motivation, convergence and numerical results.
\newblock {\em Mathematical Programming}, 127(2):245--295, 2011.

\bibitem{cartis2011adaptive-2}
C.~Cartis, N.~I. Gould, and P.~L. Toint.
\newblock Adaptive cubic regularisation methods for unconstrained optimization.
  part ii: worst-case function-and derivative-evaluation complexity.
\newblock {\em Mathematical programming}, 130(2):295--319, 2011.

\bibitem{cartis2013evaluation}
C.~Cartis, N.~I. Gould, and P.~L. Toint.
\newblock On the evaluation complexity of cubic regularization methods for
  potentially rank-deficient nonlinear least-squares problems and its relevance
  to constrained nonlinear optimization.
\newblock {\em SIAM Journal on Optimization}, 23(3):1553--1574, 2013.

\bibitem{cartis2014complexity}
C.~Cartis, N.~I. Gould, and P.~L. Toint.
\newblock On the complexity of finding first-order critical points in
  constrained nonlinear optimization.
\newblock {\em Mathematical Programming}, 144(1-2):93--106, 2014.

\bibitem{cartis2015evaluation}
C.~Cartis, N.~I. Gould, and P.~L. Toint.
\newblock On the evaluation complexity of constrained nonlinear least-squares
  and general constrained nonlinear optimization using second-order methods.
\newblock {\em SIAM Journal on Numerical Analysis}, 53(2):836--851, 2015.

\bibitem{cartis2017second}
C.~Cartis, N.~I. Gould, and P.~L. Toint.
\newblock Second-order optimality and beyond: Characterization and evaluation
  complexity in convexly constrained nonlinear optimization.
\newblock {\em Foundations of Computational Mathematics}, pages 1--35, 2017.

\bibitem{conn2000trust}
A.~R. Conn, N.~I. Gould, and P.~L. Toint.
\newblock {\em Trust region methods}, volume~1.
\newblock Siam, 2000.

\bibitem{curtis2017exploiting}
F.~E. Curtis and D.~P. Robinson.
\newblock Exploiting negative curvature in deterministic and stochastic
  optimization.
\newblock {\em arXiv preprint arXiv:1703.00412}, 2017.

\bibitem{curtis2017inexact}
F.~E. Curtis, D.~P. Robinson, and M.~Samadi.
\newblock An inexact regularized newton framework with a worst-case iteration
  complexity of o($\epsilon^{-3/2}$) for nonconvex optimization.
\newblock {\em arXiv preprint arXiv:1708.00475}, 2017.

\bibitem{curtis2017trust}
F.~E. Curtis, D.~P. Robinson, and M.~Samadi.
\newblock A trust region algorithm with a worst-case iteration complexity of
  o($\epsilon^{-3/2}$) for nonconvex optimization.
\newblock {\em Mathematical Programming}, 162(1-2):1--32, 2017.

\bibitem{du2017gradient}
S.~S. Du, C.~Jin, J.~D. Lee, M.~I. Jordan, A.~Singh, and B.~Poczos.
\newblock Gradient descent can take exponential time to escape saddle points.
\newblock In {\em Advances in Neural Information Processing Systems}, pages
  1067--1077, 2017.

\bibitem{ge2015escaping}
R.~Ge, F.~Huang, C.~Jin, and Y.~Yuan.
\newblock Escaping from saddle points—online stochastic gradient for tensor
  decomposition.
\newblock In {\em Conference on Learning Theory}, pages 797--842, 2015.

\bibitem{ge2016matrix}
R.~Ge, J.~D. Lee, and T.~Ma.
\newblock Matrix completion has no spurious local minimum.
\newblock In {\em Advances in Neural Information Processing Systems}, pages
  2973--2981, 2016.

\bibitem{hong2018gradient}
M.~Hong, J.~D. Lee, and M.~Razaviyayn.
\newblock Gradient primal-dual algorithm converges to second-order stationary
  solutions for nonconvex distributed optimization.
\newblock {\em arXiv preprint arXiv:1802.08941}, 2018.

\bibitem{hsia2013trust}
Y.~Hsia and R.-L. Sheu.
\newblock Trust region subproblem with a fixed number of additional linear
  inequality constraints has polynomial complexity.
\newblock {\em arXiv preprint arXiv:1312.1398}, 2013.

\bibitem{jin2017escape}
C.~Jin, R.~Ge, P.~Netrapalli, S.~M. Kakade, and M.~I. Jordan.
\newblock How to escape saddle points efficiently.
\newblock {\em arXiv preprint arXiv:1703.00887}, 2017.

\bibitem{lee2017first}
J.~D. Lee, I.~Panageas, G.~Piliouras, M.~Simchowitz, M.~I. Jordan, and
  B.~Recht.
\newblock First-order methods almost always avoid saddle points.
\newblock {\em arXiv preprint arXiv:1710.07406}, 2017.

\bibitem{lee2016gradient}
J.~D. Lee, M.~Simchowitz, M.~I. Jordan, and B.~Recht.
\newblock Gradient descent converges to minimizers.
\newblock {\em arXiv preprint arXiv:1602.04915}, 2016.

\bibitem{SongTao2019}
S.~Lu, M.~Razaviyayn, B.~Yang, K.~Huang, and M.~Hong.
\newblock Finding second-order stationary solutions using a perturbed projected
  gradient descent algorithm for non-convex linearly constrained problems.
\newblock {\em in prep.}, 2019.

\bibitem{lu2012trust}
S.~Lu, Z.~Wei, and L.~Li.
\newblock A trust region algorithm with adaptive cubic regularization methods
  for nonsmooth convex minimization.
\newblock {\em Computational Optimization and Applications}, 51(2):551--573,
  2012.

\bibitem{mokhtari2018escaping}
A.~Mokhtari, A.~Ozdaglar, and A.~Jadbabaie.
\newblock Escaping saddle points in constrained optimization.
\newblock {\em arXiv preprint arXiv:1809.02162}, 2018.

\bibitem{murty1987some}
K.~G. Murty and S.~N. Kabadi.
\newblock Some np-complete problems in quadratic and nonlinear programming.
\newblock {\em Mathematical programming}, 39(2):117--129, 1987.

\bibitem{nesterov2006cubic}
Y.~Nesterov and B.~T. Polyak.
\newblock Cubic regularization of newton method and its global performance.
\newblock {\em Mathematical Programming}, 108(1):177--205, 2006.

\bibitem{nouiehed2018convergence}
M.~Nouiehed, J.~D. Lee, and M.~Razaviyayn.
\newblock Convergence to second-order stationarity for constrained non-convex
  optimization.
\newblock {\em arXiv preprint arXiv:1810.02024}, 2018.

\bibitem{nouiehed2018learning}
M.~Nouiehed and M.~Razaviyayn.
\newblock Learning deep models: Critical points and local openness.
\newblock {\em arXiv preprint arXiv:1803.02968}, 2018.

\bibitem{nouiehed2019solving}
M.~Nouiehed, M.~Sanjabi, J.~D. Lee, and M.~Razaviyayn.
\newblock Solving a class of non-convex min-max games using iterative first
  order methods.
\newblock {\em arXiv preprint arXiv:1902.08297}, 2019.

\bibitem{o2019log}
M.~O'Neill and S.~J. Wright.
\newblock A log-barrier newton-cg method for bound constrained optimization
  with complexity guarantees.
\newblock {\em arXiv preprint arXiv:1904.03563}, 2019.

\bibitem{sun2016geometric}
J.~Sun, Q.~Qu, and J.~Wright.
\newblock A geometric analysis of phase retrieval.
\newblock In {\em Information Theory (ISIT), 2016 IEEE International Symposium
  on}, pages 2379--2383. IEEE, 2016.

\bibitem{sun2017complete-2}
J.~Sun, Q.~Qu, and J.~Wright.
\newblock Complete dictionary recovery over the sphere i: Overview and the
  geometric picture.
\newblock {\em IEEE Transactions on Information Theory}, 63(2):853--884, 2017.

\bibitem{sun2017complete}
J.~Sun, Q.~Qu, and J.~Wright.
\newblock Complete dictionary recovery over the sphere ii: Recovery by
  riemannian trust-region method.
\newblock {\em IEEE Transactions on Information Theory}, 63(2):885--914, 2017.

\end{thebibliography}
		\bibliographystyle{abbrv}
	}
	\newpage

	\appendix
	
	\section{Proofs for Section~\ref{sec:LC-TRACE}}
	Consider the following optimization problem
	\begin{equation}
	\displaystyle{ \operatornamewithlimits{\mbox{minimize}}_{\bx \in {\cal P}}} \, \, f(\bx),
	\end{equation}
	where ${\cal P} \triangleq \{\bx \in \mathbb{R}^n \, | \, \bA\bx \leq \bb \}$ is a polyhedron with finite number of linear constraints. In this section we generalize results from \cite{curtis2017trust} to adapt for the linear constraints. For the sake of completeness of the manuscript, some Lemmas and proofs are restated from \cite{curtis2017trust}.
	
	\vspace{0.3cm}
	
	Recall the sub-problem $Q_k$ with trust region $\delta_k$,
	\[Q_k \triangleq \displaystyle{ \operatornamewithlimits{\mbox{min}}_{\bs }} \, \, q_k(\bs) \triangleq f_k + \bg_k^T\bs + \dfrac{1}{2}\bs^T\bH_k \bs, \quad \mbox{subject to } \begin{cases} 
	\bA\bs \leq \bb - \bA\bx_k\\
	\|\bs\|_2 \leq \delta_k
	\end{cases}.
	\]
	Let $\bm{\lambda}_k^C$ be the multiplier corresponding to the linear constraint $\bA \bs \leq \bb - \bA \bx_k$, and $\lambda_k$ be the multiplier for the trust region constraint $\|\bs\|_2 \leq \delta_k$. The first order K.K.T optimality conditions for the above problem are state below \cite{bertsekas1999nonlinear}
	\begin{flalign}
	&\bg_k + (\bH_k + \lambda_k \bI)\bs_k + \bA^T \blambda_k^C = \bzero, \label{CC1}\\
	&\bzero \leq \blambda_k^C \perp \bb - \bA\bx_k - \bA \bs_k \geq \bzero, \label{CC2}\\
	&0 \leq \lambda_k \perp \delta_k - \|\bs_k\|_2^2 \geq 0. \label{CC3}
	\end{flalign}
	
	\subsection{Proof of Theorem \ref{first-order-convergence}}\label{first-order-convergence-app}
	To show convergence to first-order stationarity, we first provide in  Lemma~\ref{L3.3} a sufficient decrease condition. Then, in Lemma \ref{L3.10} we show that the number of accepted steps $|{\cal A}|$ is infinite. Combining these two results with the assumption that $f$ is lower bounded, we get the desired convergence result. In practice, the algorithm terminates when ${\cal X}_k$ is below a prescribed positive threshold $\epsilon >0$. Hence, we assume, without loss of generality that ${\cal X}_k \geq \epsilon$ for all $k\in \mathbb{N}$.
	
	\begin{lemma}\label{L3.3}
		For any $k \in \mathbb{N}$, the trial step $\bs_k$ and dual variable $\lambda_k$ satisfy
		
		\begin{equation}\label{3.2}
		f_k - q_k (\bs_k) \geq \dfrac{1}{2} \bs_k^T(\bH_k + \lambda_k \bI)\bs_k +\dfrac{1}{2}\lambda_k \|\bs_k\|_2^2.
		\end{equation} 
		
		In addition, for any $k \in \mathbb{N}_{+}$, the trial step $\bs_k$ satisfies
		\begin{equation}\label{3.3}
		f_k - q_k(\bs_k)  \geq C{\cal X}_k \min \Big\{ \delta_k, \, \dfrac{{\cal X}_k}{\|\bH_k\|_2}, 1\Big\}.
		\end{equation}
	\end{lemma}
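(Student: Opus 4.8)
The plan is to establish the two displayed inequalities separately; both are ``Cauchy point''\,/\,KKT arguments adapted to the polyhedral constraint of $Q_k$, and neither requires anything beyond the first-order optimality conditions \eqref{CC1}--\eqref{CC3} and the convexity of $\mathcal{P}$.

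For \eqref{3.2}, I would start from \eqref{CC1}, rewritten as $\bg_k = -(\bH_k+\lambda_k\bI)\bs_k - \bA^T\blambda_k^C$, and take the inner product with $\bs_k$ to get $-\bg_k^T\bs_k = \bs_k^T(\bH_k+\lambda_k\bI)\bs_k + (\blambda_k^C)^T\bA\bs_k$. The key point is that $(\blambda_k^C)^T\bA\bs_k \ge 0$: by the complementary slackness in \eqref{CC2}, $(\blambda_k^C)^T(\bb-\bA\bx_k-\bA\bs_k)=0$, hence $(\blambda_k^C)^T\bA\bs_k=(\blambda_k^C)^T(\bb-\bA\bx_k)$, and the right-hand side is nonnegative because $\bx_k$ is feasible ($\bb-\bA\bx_k\ge\bzero$) and $\blambda_k^C\ge\bzero$. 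Therefore $-\bg_k^T\bs_k \ge \bs_k^T(\bH_k+\lambda_k\bI)\bs_k$, and substituting into $f_k-q_k(\bs_k)=-\bg_k^T\bs_k-\tfrac12\bs_k^T\bH_k\bs_k$ and rearranging yields \eqref{3.2}.

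For \eqref{3.3}, I would use that $\bs_k$ globally minimizes $q_k$ over the feasible set of $Q_k$, so $q_k(\bs_k)\le q_k(\bs)$ for any feasible $\bs$. Let $\bar\bs$ attain the minimum defining $\cX_k$ in \eqref{eq:X_k}, so $\|\bar\bs\|\le 1$, $\bx_k+\bar\bs\in\mathcal{P}$, and $\bg_k^T\bar\bs=-\cX_k$. For $t\in[0,\min\{1,\delta_k\}]$ the point $t\bar\bs$ is feasible for $Q_k$: $\|t\bar\bs\|\le t\le\delta_k$, and $\bx_k+t\bar\bs=(1-t)\bx_k+t(\bx_k+\bar\bs)\in\mathcal{P}$ by convexity of $\mathcal{P}$ and $t\le 1$. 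Hence
\[
f_k-q_k(\bs_k)\ \ge\ f_k-q_k(t\bar\bs)\ =\ -t\,\bg_k^T\bar\bs-\tfrac{t^2}{2}\bar\bs^T\bH_k\bar\bs\ \ge\ t\cX_k-\tfrac{t^2}{2}\|\bH_k\|_2 .
\]
Choosing $t=\min\{1,\delta_k,\cX_k/\|\bH_k\|_2\}$: if $t=\cX_k/\|\bH_k\|_2$ the bound equals $\tfrac12\cX_k^2/\|\bH_k\|_2=\tfrac12\cX_k t$; otherwise $t\|\bH_k\|_2<\cX_k$, so $\tfrac{t^2}{2}\|\bH_k\|_2<\tfrac12 t\cX_k$ and the bound exceeds $\tfrac12 t\cX_k$. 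Either way $f_k-q_k(\bs_k)\ge\tfrac12\cX_k\min\{1,\delta_k,\cX_k/\|\bH_k\|_2\}$, which is \eqref{3.3} with $C=\tfrac12$.

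The arguments are short; the only things that need care are the sign bookkeeping in the complementarity step for \eqref{3.2} and the verification that the scaled direction $t\bar\bs$ simultaneously satisfies the trust-region and polyhedral constraints in \eqref{3.3}. Neither is a genuine obstacle. (The restriction $k\in\mathbb{N}_+$ in the second part is only cosmetic, since $\cX_k$ appears on the right-hand side; for the initial iterate one simply starts the count afterward.)
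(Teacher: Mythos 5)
Your proof is correct. For \eqref{3.2} you take exactly the paper's route: contract the KKT condition \eqref{CC1} with $\bs_k$ and discard the term $\bs_k^T\bA^T\blambda_k^C$, whose nonnegativity follows from complementary slackness \eqref{CC2} together with $\blambda_k^C\geq\bzero$ and the feasibility of $\bx_k$ (so that $(\blambda_k^C)^T\bA\bs_k=(\blambda_k^C)^T(\bb-\bA\bx_k)\geq 0$); this is word-for-word the paper's computation \eqref{eq: proof 3.3}. For \eqref{3.3} the paper simply cites Theorem~12.2.2 of \cite{conn2000trust}, whereas you supply a self-contained Cauchy-point argument: you take $\bar\bs$ attaining the minimum in \eqref{eq:X_k}, verify that $t\bar\bs$ is feasible for $Q_k$ for $t\leq\min\{1,\delta_k\}$ (using convexity of ${\cal P}$ for the polyhedral constraint and $\|\bar\bs\|\leq 1$ for the trust-region constraint), and minimize the resulting one-dimensional quadratic bound $t\cX_k-\tfrac{t^2}{2}\|\bH_k\|_2$ at $t=\min\{1,\delta_k,\cX_k/\|\bH_k\|_2\}$. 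This buys an explicit constant $C=\tfrac12$ and makes visible the one point where the constrained setting actually matters (feasibility of the scaled step), which the paper's citation hides; the cited theorem is the projected-gradient-path version of the same estimate, so the two arguments are interchangeable here. The only cosmetic caveat is the degenerate case $\|\bH_k\|_2=0$, where your chosen $t$ should be read as $\min\{1,\delta_k\}$ and the bound holds trivially since the quadratic term vanishes.
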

	
	\begin{proof}
		By definition of $q_k$,
		\begin{flalign}\label{eq: proof 3.3}
		f_k - q_k(\bs_k) &= -\bg_k^T\bs_k - \dfrac{1}{2} \bs_k^T \bH_k\bs_k \nonumber\\
		& = \bs_k^T\bH_k\bs_k + \lambda_k\|\bs_k\|_2^2 + \bs_k^T\bA^T(\blambda_k^C) - \dfrac{1}{2}\bs_k^T\bH_k\bs_k \nonumber\\
		& = \dfrac{1}{2} \bs_k^T(\bH_k + \lambda_k\bI)\bs_k + \dfrac{1}{2}\lambda_k\|\bs_k\|^2 + \bs_k^T\bA^T(\blambda_k^C) \nonumber \\
		& \geq \dfrac{1}{2} \bs_k^T(\bH_k + \lambda_k\bI)\bs_k + \dfrac{1}{2}\lambda_k\|\bs_k\|^2,
		\end{flalign}
		where the second equality follows by KKT condition \eqref{CC1}, and the last inequality follows from the feasibility of $\bx_k$ and the complementary slackness \eqref{CC2}. 
		
		Also, using \cite[Theorem~12.2.2]{conn2000trust}, we obtain
		\[f_k - q_k(\bs_k)  \geq C{\cal X}_k \min \Big\{ \delta_k, \, \dfrac{{\cal X}_k}{\|\bH_k\|_2}, 1\Big\}.\]
	\end{proof}
	
	To prove the infinite cardinality of the set ${\cal A}$, we need some intermediate Lemmas. The next result shows that the trust region radius is reduced when the CONTRACT subroutine is called.

	\begin{lemma}\label{L3.4}
		For any $k \in \mathbb{N}$, if $k \in {\cal C}$, then $\delta_{k+1} < \delta_k$.
	\end{lemma}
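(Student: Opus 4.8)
The plan is to trace through each of the possible exit points of the CONTRACT subroutine (Algorithm~\ref{alg-contract}) and verify that in every case the returned value $\delta_{k+1}$ is strictly smaller than $\delta_k$. The key structural fact to exploit is that when CONTRACT is called we are in a contraction step, so $\rho_k < \rho$, and moreover the current trial step $\bs_k$ solves $Q_k$ with trust-region radius $\delta_k$; by complementary slackness \eqref{CC3} either $\|\bs_k\|_2 = \delta_k$ or $\lambda_k = 0$. Since $\gamma_C \in (0,1)$, any branch that returns $\gamma_C\|\bs_k\|_2$ immediately gives $\delta_{k+1} = \gamma_C\|\bs_k\|_2 < \|\bs_k\|_2 \le \delta_k$ (the branches at steps~\ref{return-4} and, after taking $\|\bs_k\|_2 \le \delta_k$ into account, also the fallback cases). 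So the real content is the branches that return $\|\bs\|_2$ or $\|\bar\bs\|_2$: I must show that the regularized solution shrinks.

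First I would handle the branch where the test at step~\ref{Cond-lower-bound-no} is true, i.e. $\|\bar\bs\|_2 < \|\bs_k\|_2$ and $\lambda_k < \underline\sigma\|\bs_k\|_2$. If step~\ref{Cond-upper-bound-yes} succeeds and we return $\delta_{k+1} = \|\bs\|_2$ (step~\ref{return-1}), note that $\lambda = \bar\lambda + H_{max} + (\underline\sigma\cX_k)^{1/2} > \bar\lambda$, so $\bs$ is the solution of $Q_k(\lambda)$ with a strictly larger regularization parameter than $\bar\bs = $ solution of $Q_k(\bar\lambda)$; I need the monotonicity statement ``larger $\lambda$ gives smaller $\|\bs^*(\lambda)\|_2$'' \emph{along the particular path produced by the algorithm}. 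The excerpt flags precisely this as a subtlety (the example~\eqref{example} shows naive monotonicity can fail), so the argument must use the specific update rule — this is where I expect the main obstacle. The cleanest route is to invoke whatever monotonicity-type lemma the paper proves for this modified CONTRACT (the text promises the CONTRACT routine was ``modified... to guarantee a reduction''); concretely I would argue that for the $\lambda$'s actually selected, $Q_k(\lambda)$ is strongly convex (because $\lambda$ is taken large enough that $\bH_k + \lambda\bI \succ 0$, using $\lambda > \lambda_k$ and $\bH_k + \lambda_k\bI \succeq 0$-type reasoning, or simply $\lambda > H_{max} \ge \|\bH_k\|_2$), hence $\bs^*(\lambda)$ is unique and continuous, and then the standard trust-region monotonicity in $\lambda$ applies, giving $\|\bs\|_2 < \|\bar\bs\|_2 < \|\bs_k\|_2 \le \delta_k$. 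For the else-branch returning $\|\bar\bs\|_2$ (step~\ref{return-2}), the test at step~\ref{Cond-lower-bound-no} already gave $\|\bar\bs\|_2 < \|\bs_k\|_2 \le \delta_k$, so we are done immediately.

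Then I would handle the branch where step~\ref{Cond-lower-bound-no} is false. Here the algorithm performs geometric updates $\lambda \gets \gamma_\lambda\lambda$ (steps~\ref{lambda-bar-update-linear}, \ref{lambda-update-linear}, \ref{lambda-update-linear-while}) until $\|\bs\|_2 \neq \|\bs_k\|_2$, then at step~\ref{Cond-s-large} returns $\|\bs\|_2$ if $\|\bs\|_2 \ge \gamma_C\|\bs_k\|_2$ and $\gamma_C\|\bs_k\|_2$ otherwise. The second case is again trivial since $\gamma_C < 1$ and $\|\bs_k\|_2 \le \delta_k$. For the first case I need $\|\bs\|_2 < \|\bs_k\|_2 \le \delta_k$: once we exit the while-loop $\|\bs\|_2 \neq \|\bs_k\|_2$, and since all the $\lambda$'s used are $\ge \gamma_\lambda\bar\lambda > \lambda_k$ (and again large enough to make $Q_k(\cdot)$ strongly convex), monotonicity forces $\|\bs\|_2 \le \|\bs^*(\lambda_k)\|_2 = \|\bs_k\|_2$ with equality excluded, hence $\|\bs\|_2 < \|\bs_k\|_2$. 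A small point to verify is termination of the while-loop — that $\|\bs^*(\lambda)\|_2 \to 0$ as $\lambda \to \infty$, so it cannot stay pinned at $\|\bs_k\|_2$ forever — which follows from $\|\bs^*(\lambda)\|_2 \le \|\bg_k\|_2/(\lambda - H_{max})$ for $\lambda > H_{max}$. Collecting the cases yields $\delta_{k+1} < \delta_k$ in all of them, which completes the proof.
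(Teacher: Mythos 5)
Your proof follows essentially the same route as the paper's: a case analysis over the four return points of CONTRACT, where the branches returning $\gamma_C\|\bs_k\|_2$ are immediate from $\gamma_C\in(0,1)$ and $\|\bs_k\|_2\leq\delta_k$, the Step~\ref{return-1}/\ref{return-2} branches use the test in Step~\ref{Cond-lower-bound-no}, and the Step~\ref{return-3} branch combines the while-loop exit condition with monotonicity of $\|\bs^*(\lambda)\|_2$ in $\lambda$ (which the paper simply invokes as ``the inverse relationship''). One small note: your strong-convexity justification of that monotonicity is shakier than needed ($\bar{\lambda}$ need not exceed $H_{max}$, and $\bH_k+\lambda_k\bI\succeq 0$ is exactly what fails in the constrained setting), but the non-strict monotonicity you actually use follows for global minimizers of $Q_k(\lambda_1)$ and $Q_k(\lambda_2)$ from the standard exchange argument, with no convexity required.
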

	\begin{proof}
		Suppose that $k \in {\cal C}$. We prove the result by considering the various cases that may occur within the CONTRACT subroutine. If Step~\ref{return-4} is reached, the subroutine returns $\delta_{k+1} = \gamma_C\|\bs_k\|_2 < \delta_k$. Otherwise, if Step~\ref{return-1} is reached, the subroutine returns $\delta_{k+1} = \|\bs\|_2$ where $\bs$ solves $Q_k(\lambda)$ for $\lambda \geq \bar{\lambda}$. Hence, 
		\[
		\delta_{k+1} = \|\bs\|_2  < \|\bs_k\|_2 \leq \delta_k,
		\]
		where the strict inequality follows from Step~\ref{Cond-lower-bound-no}. Similarly, if Step~\ref{return-2} is reached, the subroutine returns $\delta_{k+1} = \|\bar\bs\|_2$ where $\bar\bs$ solves $Q_k(\bar\lambda)$. Hence, 
		\[
		\delta_{k+1} = \|\bar\bs\|_2  < \|\bs_k\|_2 \leq \delta_k.
		\]
		Otherwise, Step~\ref{return-3} is reached. In which case, the subroutine returns $\delta_{k+1} = \|\bs\|_2$ where $\bs$ solves $Q_k(\lambda)$ for $\lambda > \lambda_k$. The result follows using the while loop condition Step~\ref{while-loop} along with the inverse relationship of $\lambda$ and $\|\bs\|$.
	\end{proof}
	
	We now show that for all iterations $k$, the trust region region radius $\delta_k$ is upper bounded by a non-decreasing sequence $\{\Delta_k\}$. Also, if $k \in {\cal A} \cup {\cal E}$, we show that $\delta_{k+1}\geq \delta_k$.

	\begin{lemma}\label{L3.5-3.6}
		For any $k \in \mathbb{N}$, there holds $\delta_k \leq \Delta_k \leq \Delta_{k+1}$. Moreover, $\delta_{k+1}\geq \delta_k$ for all $k \in {\cal A} \cup {\cal E}$.
	\end{lemma}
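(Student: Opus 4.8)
The plan is to prove both assertions at once by induction on $k$, maintaining the invariant $\delta_k \le \Delta_k$ and reading the update rules off Algorithm~\ref{alg-LC} according to the partition $\mathbb{N} = {\cal A}\,\cup\,{\cal C}\,\cup\,{\cal E}$. For the base case, Step~\ref{initial} selects $\{\delta_0,\Delta_0\}$ with $0<\delta_0\le\Delta_0$, so the invariant holds at $k=0$. Note also that $\{\Delta_k\}$ is non-decreasing in every case: in ${\cal A}$ Step~\ref{Delta-acc} gives $\Delta_{k+1}=\max\{\Delta_k,\gamma_E\|\bs_k\|_2\}\ge\Delta_k$, and in ${\cal C}$ and ${\cal E}$ Steps~\ref{Delta-cont} and~\ref{Delta-exp} give $\Delta_{k+1}=\Delta_k$; this establishes $\Delta_k\le\Delta_{k+1}$ unconditionally.

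For the inductive step I assume $\delta_k\le\Delta_k$ and split into the three cases. If $k\in{\cal A}$, Step~\ref{delta-acc} gives $\delta_{k+1}=\min\{\Delta_{k+1},\max\{\delta_k,\gamma_E\|\bs_k\|_2\}\}\le\Delta_{k+1}$, re-establishing the invariant; and since $\max\{\delta_k,\gamma_E\|\bs_k\|_2\}\ge\delta_k$ and $\Delta_{k+1}\ge\Delta_k\ge\delta_k$ (inductive hypothesis), the minimum is at least $\delta_k$, giving $\delta_{k+1}\ge\delta_k$. If $k\in{\cal C}$, then $\Delta_{k+1}=\Delta_k$ and Lemma~\ref{L3.4} yields $\delta_{k+1}<\delta_k\le\Delta_k=\Delta_{k+1}$, so the invariant holds (and no lower bound on $\delta_{k+1}$ is claimed here, consistent with $k\notin{\cal A}\cup{\cal E}$). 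If $k\in{\cal E}$, Step~\ref{delta-exp} gives $\delta_{k+1}=\min\{\Delta_{k+1},\lambda_k/\sigma_k\}\le\Delta_{k+1}$, again re-establishing the invariant.

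The only step that is not pure bookkeeping is obtaining $\delta_{k+1}\ge\delta_k$ in the expansion case. There I would use the defining conditions of an expansion step, $\lambda_k/\|\bs_k\|_2>\sigma_k$ and $\|\bs_k\|_2<\Delta_k$, together with $\sigma_k\ge\underline{\sigma}>0$ (which follows since $\sigma_0\ge\underline\sigma$ and every $\sigma$-update is a maximum with the previous value). The first condition forces $\lambda_k>0$, so the complementary slackness relation~\eqref{CC3} for the trust-region constraint gives $\|\bs_k\|_2=\delta_k$; combining, $\lambda_k/\sigma_k>\|\bs_k\|_2=\delta_k$ and $\Delta_{k+1}=\Delta_k>\|\bs_k\|_2=\delta_k$, hence $\delta_{k+1}=\min\{\Delta_{k+1},\lambda_k/\sigma_k\}>\delta_k$. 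The main things to be careful about are invoking Lemma~\ref{L3.4} (rather than any monotone dependence of $\|\bs^*(\lambda)\|_2$ on $\lambda$, which fails in the constrained setting) for the contraction case, and the implication $\lambda_k>0\Rightarrow\|\bs_k\|_2=\delta_k$ for the expansion case; everything else is a direct consequence of the update formulas and the inductive hypothesis.
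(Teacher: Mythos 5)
Your proof is correct and follows essentially the same route as the paper's: induction on the invariant $\delta_k\le\Delta_k$, monotonicity of $\{\Delta_k\}$ read off the update steps, Lemma~\ref{L3.4} for the contraction case, and the expansion conditions together with \eqref{CC3} to get $\|\bs_k\|_2=\delta_k$ and hence $\delta_{k+1}\ge\delta_k$. Your explicit remark that $\sigma_k\ge\underline{\sigma}>0$ (so that $\lambda_k/\sigma_k$ is well defined) is a small point the paper leaves implicit, but otherwise the arguments coincide.
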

	\begin{proof}
		The fact that $\Delta_k \leq \Delta_{k+1}$ for all $k \in \mathbb{N}$ follows from the computations in Steps \ref{Delta-acc}, \ref{Delta-cont}, and \ref{Delta-exp} of Algorithm~\ref{alg-LC}. It remains to show that $\delta_k \leq  \Delta_k$ for all $k \in \mathbb{N}$. We prove the result by means of induction. 
		
		The inequality holds for $k = 0$ by the initialization of quantities in Step~\ref{initial} of Algorithm~\ref{alg-LC}. Assume the induction hypothesis holds for iteration $k$. By the computations in Steps~\ref{Delta-acc}, \ref{delta-acc}, \ref{Delta-exp}, \ref{delta-exp} and by Lemma \ref{L3.4}, the result holds for iteration~$k+1$. We next show that $\delta_{k+1}\geq \delta_k$ for all $k \in {\cal A} \cup {\cal E}$.
		
		Suppose $k \in {\cal A}$. It follows from Steps~\ref{Delta-acc} and \ref{delta-acc} that
		\[\delta_{k+1} =  \min\{\max\{\Delta_k,\gamma_E\|\bs_k\|_2\} , \max\{\delta_k, \gamma_E\|\bs_k\|_2\}\} \geq \delta_k.\]
		Here the inequality follows since $\delta_k \leq \Delta_k \leq \Delta_{k+1}$. Now suppose $k \in {\cal E}$. By the conditions indicated in Step~\ref{Cond-exp}, we have $\lambda_k > \sigma_k\|\bs_k\|_2 \geq 0$. It follows by \eqref{CC3} that $\|\bs_k\|_2 =\delta_k$. We obtain
		\[\delta_{k+1} = \min\{\Delta_{k+1}, \lambda_k/\sigma_k\} \geq \min\{\delta_k , \|\bs_k\|_2\} = \delta_k,\]
		where the inequality follows since $\delta_k \leq \Delta_k \leq \Delta_{k+1}$.
	\end{proof}
	The next result, shows that we cannot have two consecutive expansion steps.
	
	\begin{lemma}\label{L3.7}
		For any $k \in \mathbb{N}$, if $k \in {\cal C} \cup {\cal E}$, then $k+1  \notin {\cal E}$.
	\end{lemma}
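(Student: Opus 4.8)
The plan is to dispose of the two membership cases $k\in\mathcal C$ and $k\in\mathcal E$ separately, exactly as in the unconstrained argument of \cite{curtis2017trust}, with the only genuinely new work occurring in the expansion case. Recall first that $k+1\in\mathcal E$ forces, in particular, $\lambda_{k+1}>\sigma_{k+1}\|\bs_{k+1}\|_2$ together with $\|\bs_{k+1}\|_2<\Delta_{k+1}$ (the latter because $\|\bs_{k+1}\|_2\le\delta_{k+1}\le\Delta_{k+1}$ by Lemma~\ref{L3.5-3.6}), and that $\|\bs_{k+1}\|_2>0$: since we assume $\cX_{k+1}\ge\epsilon>0$, there is a feasible direction along which $q_{k+1}$ strictly decreases near the origin, so the solution of $Q_{k+1}$ is nonzero.

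If $k\in\mathcal C$: after CONTRACT the algorithm recomputes $(\bs_{k+1},\lambda_{k+1})$ from $Q_{k+1}$ and, because $\rho_k<\rho$, executes Step~\ref{Sigma-rho_k}, which sets $\sigma_{k+1}=\max\{\sigma_k,\lambda_{k+1}/\|\bs_{k+1}\|_2\}\ge\lambda_{k+1}/\|\bs_{k+1}\|_2$. Hence $\lambda_{k+1}\le\sigma_{k+1}\|\bs_{k+1}\|_2$, which contradicts one of the defining conditions of $\mathcal E$, so $k+1\notin\mathcal E$.

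If $k\in\mathcal E$: the expansion step keeps $\bx_{k+1}=\bx_k$, $\Delta_{k+1}=\Delta_k$ and $\sigma_{k+1}=\sigma_k$ (Step~\ref{sigma-exp}; Step~\ref{Cond-rho_k} is not triggered since $\rho_k\ge\rho$), and sets $\delta_{k+1}=\min\{\Delta_{k+1},\lambda_k/\sigma_k\}$ (Step~\ref{delta-exp}). From the expansion conditions $\lambda_k/\|\bs_k\|_2>\sigma_k>0$, so $\lambda_k>0$ and $\|\bs_k\|_2=\delta_k$ by \eqref{CC3}; with $\|\bs_k\|_2<\Delta_k=\Delta_{k+1}$ and $\lambda_k/\sigma_k>\|\bs_k\|_2$ this gives $\delta_{k+1}>\delta_k$. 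Assume toward a contradiction that $k+1\in\mathcal E$. If $\lambda_{k+1}=0$ then $\lambda_{k+1}=0\le\sigma_{k+1}\|\bs_{k+1}\|_2$, a contradiction; so $\lambda_{k+1}>0$, and \eqref{CC3} forces $\|\bs_{k+1}\|_2=\delta_{k+1}$, whence $\delta_{k+1}<\Delta_{k+1}$, i.e.\ $\delta_{k+1}=\lambda_k/\sigma_k$. Then $\lambda_{k+1}>\sigma_{k+1}\|\bs_{k+1}\|_2=\sigma_k(\lambda_k/\sigma_k)=\lambda_k$. But $Q_{k+1}$ is the same quadratic $q_k$ with the same linear constraints as $Q_k$, differing only in the larger radius $\delta_{k+1}>\delta_k$, so by the monotone (``inverse'') relationship between the trust-region radius and its multiplier — the same relationship invoked in the proof of Lemma~\ref{L3.4} — we have $\lambda_{k+1}\le\lambda_k$, a contradiction. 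Hence $k+1\notin\mathcal E$.

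The only non-bookkeeping step, and the one I expect to be the main obstacle, is this last implication: enlarging the trust region (keeping $q_k$ and the linear constraints fixed) cannot strictly increase the trust-region multiplier. In unconstrained TRACE this is classical, but here $\bH_k+\lambda_k\bI$ need not be positive semidefinite and $\lambda\mapsto\|\bs^*(\lambda)\|_2$ may be discontinuous, so the robust route is to compare global minimizers: realizing $\bs_k,\bs_{k+1}$ as global minimizers of the Lagrangian-regularized subproblems $Q_k(\lambda_k),Q_k(\lambda_{k+1})$ over the polytope $\{\bA\bt\le\bb-\bA\bx_k\}$ and adding the two optimality inequalities yields $(\lambda_k-\lambda_{k+1})\big(\|\bs_{k+1}\|_2^2-\|\bs_k\|_2^2\big)\ge0$, which with $\|\bs_{k+1}\|_2=\delta_{k+1}>\delta_k\ge\|\bs_k\|_2$ gives $\lambda_{k+1}\le\lambda_k$. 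Making this rigorous — in particular justifying that the trust-region iterate coincides with the regularized-subproblem minimizer even though the Lagrangian relaxation need not be exact in the presence of the linear constraints — is precisely the extra care required beyond the unconstrained proof.
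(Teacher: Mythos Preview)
Your proof is correct and follows essentially the same route as the paper's: the contraction case is dispatched identically via Step~\ref{Sigma-rho_k}, and for the expansion case your single contradiction argument is a streamlined repackaging of the paper's two subcases (the paper splits on whether $\delta_{k+1}=\lambda_k/\sigma_k$ or $\delta_{k+1}=\Delta_k$, whereas you observe that $k+1\in\mathcal E$ forces the former and then argue exactly as in the paper's first subcase). The monotonicity of the radius--multiplier map that worries you is precisely what the paper also invokes, verbatim, as ``the relationship between the trust region radius and its corresponding multiplier'' without further justification here; so your caution is well placed, but it is not additional work beyond what the paper assumes at this point.
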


	\begin{proof}
		Observe that if $\lambda_{k+1}= 0$, then conditions in Steps~\ref{Cond-exp} of Algorithm \ref{alg-LC} ensure that
		$(k+1) \notin {\cal E}$. Thus, by \eqref{CC3}, we may proceed under the assumptions that  $\|\bs_{k+1}\|_2 = \delta_{k+1}$ and $\lambda_{k+1}> 0$.

		Suppose that $k \in {\cal C}$, i.e. $\rho_k< \rho$. It follows that Step~\ref{Sigma-rho_k} sets  $\sigma_{k+1} \geq \lambda_{k+1}/\|\bs_{k+1}\|_2$. Therefore, if $\rho_{k+1}\geq \rho$, we have $(k+1) \in {\cal A}$. Otherwise, $\rho_{k+1} <\rho,$ which implies that $(k + 1) \in {\cal C}$.
		
		Now suppose that $k \in {\cal E}$. It follows that 
		\begin{equation}\label{eq:proofL3.7}
		\lambda_k > \sigma_k \|\bs_k\|_2, \quad \delta_{k+1} = \min\{\Delta_k, \lambda_k/\sigma_k\}, \quad  \mbox{and} \quad \sigma_{k+1} = \sigma_{k}.
		\end{equation}
		
		Combined with \eqref{CC3}, we get $\|\bs_k\|_2= \delta_k$. We now consider two different cases:
		\begin{enumerate}
			\item Suppose $\Delta_k \geq  \lambda_k/\sigma_k$. It follows from \eqref{eq:proofL3.7} that 
			\begin{equation}\label{eq:proof3.7-2}
			\delta_{k+1} = \lambda_k/\sigma_k > \|\bs_k\|_2 = \delta_k.
			\end{equation}
			Therefore, by the relationship between the trust region radius and its corresponding multiplier, we get $\lambda_{k+1} \leq \lambda_k$. Combined with \eqref{eq:proofL3.7} and \eqref{eq:proof3.7-2}, we obtain
			\[\lambda_{k+1} \leq \lambda_k = \sigma_k\delta_{k+1} = \sigma_{k+1}\|\bs_{k+1}\|_2.\]
			Hence $(k + 1) \notin {\cal E}$.
			
			\item Suppose $\Delta_k < \lambda_k / \sigma_k$. Using \eqref{eq:proofL3.7}
			\[\|\bs_{k+1}\|_2= \delta_{k+1} = \Delta_k = \Delta_{k+1},\]
			where the last equality holds by Step~\ref{Delta-exp}.
			If $\rho_{k+1} \geq \rho$, then $(k + 1) \in {\cal A}_{\Delta}\subseteq {\cal A}$. Otherwise, $\rho_{k+1} < \rho$, from which it follows that $(k + 1) \in {\cal C}$.  
		\end{enumerate}
		Hence, in both cases $(k + 1) \notin {\cal E}$.
	\end{proof}
	
	Next, we show that if the dual variable for the trust region constraint $\lambda_k$ is sufficiently large, then the constraint is active and the sufficient decrease criteria is met.
	
	\begin{lemma}\label{L3.8}
		For any $k \in \mathbb{N}$, if the trial step $\bs_k$ and dual variable $\lambda_k$ satisfy
		\begin{equation}\label{3.6}
		\lambda_k \geq g_{Lip} + H_{max} + \rho \|\bs_k\|_2,
		\end{equation}
		then $\|\bs_k\|_2 = \delta_k$ and $\rho_k \geq \rho$.
	\end{lemma}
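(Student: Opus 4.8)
The plan is to establish the two conclusions separately. For $\|\bs_k\|_2=\delta_k$: since $g_{Lip}>0$ and $H_{max}>0$, hypothesis~\eqref{3.6} forces $\lambda_k>0$, so the complementarity condition~\eqref{CC3} immediately gives $\|\bs_k\|_2=\delta_k$. Moreover $\bs_k\neq\bzero$ — otherwise $\bg_k^T\bs\geq 0$ for every feasible direction at $\bx_k$, which would make $\cX_k=0$, contradicting the running assumption $\cX_k\geq\epsilon>0$ — so in fact $\|\bs_k\|_2=\delta_k>0$, which is what lets us divide by $\|\bs_k\|_2^3$ below.

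For $\rho_k\geq\rho$, recall from~\eqref{eq:rho-k} that $\rho_k=\bigl(f_k-f(\bx_k+\bs_k)\bigr)/\|\bs_k\|_2^3$, so it suffices to show $f_k-f(\bx_k+\bs_k)\geq\rho\|\bs_k\|_2^3$. I would split the actual reduction as $f_k-f(\bx_k+\bs_k)=\bigl(f_k-q_k(\bs_k)\bigr)+\bigl(q_k(\bs_k)-f(\bx_k+\bs_k)\bigr)$ and bound the two pieces. For the model reduction, Lemma~\ref{L3.3} (inequality~\eqref{3.2}) together with $\|\bH_k\|_2\leq H_{max}$ gives
\[
f_k-q_k(\bs_k)\;\geq\;\tfrac12\bs_k^T(\bH_k+\lambda_k\bI)\bs_k+\tfrac12\lambda_k\|\bs_k\|_2^2\;\geq\;\bigl(\lambda_k-\tfrac12 H_{max}\bigr)\|\bs_k\|_2^2 .
\]
For the Taylor remainder, Lipschitz continuity of $\nabla f$ on the segment $[\bx_k,\bx_k+\bs_k]$ yields $f(\bx_k+\bs_k)\leq f_k+\bg_k^T\bs_k+\tfrac12 g_{Lip}\|\bs_k\|_2^2$, hence $q_k(\bs_k)-f(\bx_k+\bs_k)\geq-\tfrac12(g_{Lip}+H_{max})\|\bs_k\|_2^2$. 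Adding the two bounds gives $f_k-f(\bx_k+\bs_k)\geq(\lambda_k-g_{Lip}-H_{max})\|\bs_k\|_2^2$; dividing by $\|\bs_k\|_2^3>0$ and invoking~\eqref{3.6} gives $\rho_k\geq(\lambda_k-g_{Lip}-H_{max})/\|\bs_k\|_2\geq\rho$.

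A slightly more self-contained variant skips Lemma~\ref{L3.3} and works directly from the KKT system of $Q_k$: by~\eqref{CC1}, $-\bg_k^T\bs_k=\bs_k^T(\bH_k+\lambda_k\bI)\bs_k+(\blambda_k^C)^T\bA\bs_k$, and feasibility of $\bx_k$ (so $\bb-\bA\bx_k\geq\bzero$) together with the complementarity~\eqref{CC2} gives $(\blambda_k^C)^T\bA\bs_k=(\blambda_k^C)^T(\bb-\bA\bx_k)\geq 0$, whence $-\bg_k^T\bs_k\geq(\lambda_k-H_{max})\|\bs_k\|_2^2$; combining this with the gradient-Lipschitz bound yields the same estimate. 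The one place the argument departs from the unconstrained TRACE proof — and the only step that is not a routine Taylor estimate — is the treatment of the linear-constraint multiplier term $(\blambda_k^C)^T\bA\bs_k$: it is precisely here that feasibility of the current iterate and complementary slackness are invoked to discard a term that is simply absent in the unconstrained setting. I expect this (together with carrying the $H_{max}$ contribution, which also does not appear in the unconstrained analysis, where $\bH_k+\lambda_k\bI\succeq 0$) to be the only real subtlety.
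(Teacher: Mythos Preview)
Your proof is correct and follows essentially the same route as the paper: the same decomposition $f_k-f(\bx_k+\bs_k)=(f_k-q_k(\bs_k))+(q_k(\bs_k)-f(\bx_k+\bs_k))$, the same use of Lemma~\ref{L3.3} for the first piece, and a gradient-Lipschitz/Taylor bound for the second (the paper uses the mean value theorem plus Cauchy--Schwarz, you use the descent lemma --- equivalent). Your extra care in noting $\bs_k\neq\bzero$ via $\cX_k\geq\epsilon$ before dividing by $\|\bs_k\|_2^3$ is a detail the paper leaves implicit.
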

	\begin{proof}
		By the definition of the objective function of the model $q_k$, there exists a point $\bar{\bx}_k \in \mathbb{R}^n$ on the line segment $[\bx_k , \bx_k + \bs_k ]$ such that
		\begin{equation}\label{3.7}
		\begin{array}{l l}
		q_k (\bs_k ) - f (\bx_k + \bs_k) = & \big(\bg_k - \bg(\bar{\bx}_k)\big)^T \bs_k + \dfrac{1}{2} \bs_k^T \bH_k \bs_k\\
		& \geq -\|\bg_k - \bg(\bar{\bx}_k)\|_2\|\bs_k\|_2 - \dfrac{1}{2}\|\bH_k\|_2\|\bs_k\|_2^2. 
		\end{array}
		\end{equation}
		Therefore,
		\begin{flalign*}
		f_k - f(\bx_k + \bs_k) &= f_k - q_k(\bs_k)  + q_k(\bs_k) - f(\bx_k + \bs_k)\\
		&\geq \dfrac{1}{2}\bs_k^T\bH_k \bs_k + \lambda_k \|\bs_k\|_2^2 - \|\bg_k - \bg(\bar{\bx}_k)\|_2\|\bs_k\|_2  - \dfrac{1}{2}\|\bH_k\|_2\|\bs_k\|_2^2\\
		&\geq -\|\bH_k\|_2\|\bs_k\|_2^2 + \lambda_k \|\bs_k\|_2^2 - g_{Lip}\|\bs_k\|_2^2\\
		&\geq (\lambda_k  - g_{Lip} - H_{max})\|\bs_k\|_2^2\\
		&\geq \rho \|\bs_k\|_2^3.
		\end{flalign*} 
		Here the first inequality holds  from Lemma~\ref{L3.3} and expression \eqref{3.7}. The result $\|\bs_k\|=\delta_k$ follows directly from \eqref{3.6} and \eqref{CC3}.
	\end{proof}
	
	We now use the previous results to show that if from some iteration onward, all the steps are contraction steps, then the sequence of trust region radii converge to zero, and the sequence of dual variables converge to infinity.
	
	\begin{lemma}\label{L3.9}
		If $k \in {\cal C}$ for all  $k \geq k_0$, then $\{\delta_k\} \rightarrow 0$ and $\{\lambda_k\} \rightarrow \infty$.
	\end{lemma}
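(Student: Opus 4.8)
This is a direct argument: the two conclusions are genuinely derived, and the point is that together with Lemma~\ref{L3.8} (large $\lambda_k$ forces $\rho_k\ge\rho$) they make the hypothesis untenable, which is what is later used to show $|{\cal A}|=\infty$. First I would record what the hypothesis gives. Since every step $k\ge k_0$ lies in ${\cal C}$, Step~\ref{x-cont} forces $\bx_k\equiv\bx_{k_0}$, so the sub-problem data $(f_k,\bg_k,\bH_k,\cX_k)$ is frozen and, by Step~\ref{Delta-cont}, $\Delta_k\equiv\Delta_{k_0}$; in particular $\cX_k\equiv\cX_{k_0}\ge\epsilon$. By Lemma~\ref{L3.4}, $\{\delta_k\}_{k\ge k_0}$ is strictly decreasing, hence converges to some $\delta^*\ge 0$. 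A case check of the CONTRACT branches shows the returned radius always satisfies $\delta_{k+1}\le\|\bs_k\|_2$: Steps~\ref{return-1}--\ref{return-2} return $\|\bs\|_2$ or $\|\bar\bs\|_2$ with $\bs,\bar\bs$ solving $Q_k(\lambda)$ for some $\lambda>\lambda_k$, and the weak monotonicity
\[
\lambda_1\ge\lambda_2\ \Longrightarrow\ \|\bs^*(\lambda_1)\|_2\le\|\bs^*(\lambda_2)\|_2,
\]
obtained by summing the defining optimality inequalities of $Q_k(\lambda_1)$ and $Q_k(\lambda_2)$ (this weak version, unlike strict monotonicity or continuity, does survive the linear constraints), gives $\|\bs\|_2\le\|\bar\bs\|_2<\|\bs_k\|_2$; Step~\ref{return-3} uses the same monotonicity and the while-loop exit; and Step~\ref{return-4} returns $\gamma_C\|\bs_k\|_2$. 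Together with $\|\bs_{k+1}\|_2\le\delta_{k+1}$ (feasibility of $Q_{k+1}$) this yields $\|\bs_{k+1}\|_2\le\delta_{k+1}\le\|\bs_k\|_2\le\delta_k$, so $\{\|\bs_k\|_2\}_{k\ge k_0}$ decreases and it and $\{\delta_k\}$ converge to the common limit $\delta^*$.

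Next I would show $\delta^*=0$, splitting on which CONTRACT branch occurs for infinitely many $k\ge k_0$. If Step~\ref{return-4} occurs infinitely often, then along that subsequence $\|\bs_{k+1}\|_2\le\delta_{k+1}=\gamma_C\|\bs_k\|_2$; passing to the limit and using $\gamma_C\in(0,1)$ forces $\delta^*=0$. Otherwise, for all large $k$ the call returns through Step~\ref{return-1}, \ref{return-2}, or \ref{return-3}, so $\delta_{k+1}=\|\tilde\bs_k\|_2$ for a minimizer $\tilde\bs_k$ of $Q_k(\tilde\lambda_k)$ with $\tilde\lambda_k$ exceeding $\lambda_k$ by at least the fixed additive amount $\underline\sigma\Delta_{k_0}>0$ (Steps~\ref{return-1}--\ref{return-2}) or by the fixed factor $\gamma_\lambda>1$ (Step~\ref{return-3}). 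I would then argue that $\tilde\bs_k$ also minimizes the trust-region sub-problem $Q_{k+1}$ (whose radius is $\|\tilde\bs_k\|_2$), that every minimizer of $Q_{k+1}$ has norm $\delta_{k+1}$, and that $\tilde\lambda_k$ is the trust-region multiplier the algorithm records, so $\lambda_{k+1}=\tilde\lambda_k$ and hence $\{\lambda_k\}\to\infty$. Pairing the stationarity condition~\eqref{CC1} with $\bs_k$ and using $\bs_k^T\bA^T\blambda_k^C\ge 0$ (feasibility of $\bx_k$ and complementary slackness~\eqref{CC2}) gives $(\lambda_k-H_{max})\|\bs_k\|_2\le g_{max}$, so $\lambda_k\to\infty$ again forces $\|\bs_k\|_2\to 0$ and $\delta^*=0$.

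Having $\delta^*=0$, hence $\bs_k\to\bzero$, I would finish by proving $\lambda_k\to\infty$. For all large $k$ any constraint inactive at $\bx_{k_0}$ stays inactive at $\bx_{k_0}+\bs_k$, so $\blambda_k^C$ is supported on the active set $I$ of $\bx_{k_0}$. Choosing $\hat\bs$ with $\|\hat\bs\|_2\le 1$, $\bA(\bx_{k_0}+\hat\bs)\le\bb$ and $\bg_{k_0}^T\hat\bs=-\cX_{k_0}\le-\epsilon$, one has $(\bA\hat\bs)_i\le 0$ for $i\in I$, hence $\hat\bs^T\bA^T\blambda_k^C\le 0$, and pairing~\eqref{CC1} with $\hat\bs$,
\[
0=\bg_k^T\hat\bs+\hat\bs^T(\bH_k+\lambda_k\bI)\bs_k+\hat\bs^T\bA^T\blambda_k^C\le-\epsilon+(H_{max}+\lambda_k)\|\bs_k\|_2,
\]
so $\lambda_k\ge\epsilon/\|\bs_k\|_2-H_{max}\to\infty$, which completes the proof.

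The hard part is the multiplier bookkeeping in the second step: linking the regularization parameter $\tilde\lambda_k$ manipulated inside CONTRACT to the trust-region multiplier $\lambda_{k+1}$ the algorithm assigns to $Q_{k+1}$. In the unconstrained TRACE analysis this is the textbook equivalence between Tikhonov-regularized and trust-region sub-problems; with the linear constraints present, $Q_k(\lambda)$ may be nonconvex with several minimizers and the maps $\lambda\mapsto\bs^*(\lambda)$, $\lambda\mapsto\lambda/\|\bs^*(\lambda)\|_2$ need not be continuous (cf.~\eqref{example}), so one must verify with care that a minimizer of $Q_k(\tilde\lambda_k)$ solves $Q_{k+1}$ with radius equal to its norm, that all minimizers of $Q_{k+1}$ share that norm, and that $\tilde\lambda_k$ is the multiplier recorded for $Q_{k+1}$. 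Everything else reduces to routine manipulation of Lemmas~\ref{L3.3}--\ref{L3.4} and the KKT system~\eqref{CC1}--\eqref{CC3}.
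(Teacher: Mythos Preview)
Your approach is essentially the paper's: both argue that $\{\delta_k\}$ is strictly decreasing (Lemma~\ref{L3.4}), split on whether Step~\ref{return-4} is reached infinitely often, obtain $\delta_k\to 0$ in the first case by the geometric contraction and $\lambda_k\to\infty$ in the second case from the guaranteed increments $\lambda_{k+1}\ge\min\{\lambda_k+\underline\sigma\Delta_k,\gamma_\lambda\lambda_k\}$, and then deduce the remaining limit from the inverse relationship between the radius and the multiplier.

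The differences are ones of explicitness rather than strategy. The paper simply invokes ``the relationship between the trust region radius and its corresponding multiplier'' in both directions and asserts $\lambda_{k+1}=\lambda$ after CONTRACT returns through Steps~\ref{return-1}--\ref{return-3}; you supply concrete arguments for each of these---the weak monotonicity $\lambda_1\ge\lambda_2\Rightarrow\|\bs^*(\lambda_1)\|_2\le\|\bs^*(\lambda_2)\|_2$ (which, as you note, survives the linear constraints even though strict monotonicity and continuity do not), the KKT pairing $(\lambda_k-H_{max})\|\bs_k\|_2\le g_{max}$ for one direction, and the active-set argument with the $\cX_k$-attaining direction $\hat\bs$ for the other. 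The last is in effect a self-contained version of Lemma~\ref{LXksk}, which in the paper appears only later. You also correctly flag the multiplier-bookkeeping step (identifying the CONTRACT parameter $\tilde\lambda_k$ with the recorded $\lambda_{k+1}$) as the delicate point; the paper glosses over this, so your treatment is at least as rigorous as the original.
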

	
	\begin{proof}
		Assume, without loss of generality, that $k \in {\cal C}$ for all $k \in \mathbb{N}$. It follows from Lemma \ref{L3.4} that $\{\delta_k\}$ is monotonically strictly decreasing. Combined with the fact that $\{\delta_k\}$ is bounded below by zero, we have that $\{\delta_k\}$ converges. We may now observe that if Step~\ref{return-4} of the CONTRACT subroutine is reached infinitely often, then clearly,
		$\{\delta_k\} \rightarrow 0$. Hence, it follows by the relationship between the trust region radius and its corresponding multiplier that $\{\lambda_k\} \rightarrow \infty$. Therefore, let us assume that  Step~\ref{return-4} of the CONTRACT subroutine    does not occur infinitely often, i.e., that there exists $k_{\cal C} \in \mathbb{N}$ such that Step~\ref{return-1}, \ref{return-2}, or \ref{return-3} is reached for all $k \geq k_{\cal C}$. 
		Consider iteration $k_{\cal C}$. Steps \ref{lambda-update-sigma}, \ref{lambda-update-H}, \ref{lambda-bar-update-linear}, \ref{lambda-update-linear}, \ref{lambda-update-linear-while}  in the CONTRACT subroutine will set 
		\[\lambda_{k+1} =\lambda \geq \min\{ \lambda_k + \underline{\sigma}\Delta_k, \gamma_{\lambda} \lambda_k\} >\lambda_k \quad \mbox{ for all } k \geq k_{\cal C} +1.\]
		Therefore, since $k \in {\cal C}$ for all $k \geq k_{\cal C}$, we have $\bx_k = \bx_{k_{\cal C}}$ (and so ${\cal X}_k = {\cal X}_{k_{\cal C}}$) for all $k \geq k_{\cal C}$, which implies that $\{\lambda_k\}\rightarrow \infty$. It follows by the relationship between the trust region radius and its corresponding multiplier that $\|\bs_k\|_2 = \delta_k \rightarrow 0$.
	\end{proof}
	
	We now prove that the set of accepted steps is infinite.
	
	\begin{lemma}\label{L3.10}
		The set ${\cal A}$ has infinite cardinality.
	\end{lemma}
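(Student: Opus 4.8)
The plan is to argue by contradiction: suppose $|\mathcal{A}| < \infty$, so there is an iteration $k_1$ such that every $k \geq k_1$ is either a contraction or an expansion step. By Lemma~\ref{L3.7}, two consecutive expansion steps are impossible, so after $k_1$ the expansion steps are isolated, and in particular infinitely many $k \geq k_1$ lie in $\mathcal{C}$. I would first handle the case where \emph{all} steps beyond some $k_2 \geq k_1$ are contractions: then Lemma~\ref{L3.9} gives $\{\delta_k\} \to 0$, $\{\lambda_k\} \to \infty$, and $\|\bs_k\|_2 = \delta_k \to 0$. But then eventually $\lambda_k \geq g_{Lip} + H_{max} + \rho\|\bs_k\|_2$, so Lemma~\ref{L3.8} forces $\rho_k \geq \rho$; combined with the fact (to be extracted from the CONTRACT subroutine design, i.e. the acceptance-compatible bound on $\lambda_k/\|\bs_k\|_2 \leq \sigma_k$ produced by the contraction updates and the monotonicity of $\sigma_k$ in Step~\ref{Sigma-rho_k}) that the ratio test is eventually satisfied, we get $k \in \mathcal{A}$, a contradiction.

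Next I would rule out the remaining case, where there are infinitely many expansion steps after $k_1$. Since expansions cannot be consecutive (Lemma~\ref{L3.7}) and no steps are accepted, the tail of the sequence must alternate or interleave expansions with blocks of contractions, and in particular $\bx_k = \bx_{k_1}$ is fixed for all $k \geq k_1$, so $\cX_k = \cX_{k_1} \geq \epsilon$, $\bg_k$, $\bH_k$ are all constant. The key tension is between the contraction steps (which strictly decrease $\delta_k$ and push $\lambda_k$ up, by Lemmas~\ref{L3.4} and the $\lambda$-update rules in CONTRACT) and the expansion steps (which, by Step~\ref{delta-exp}, set $\delta_{k+1} = \min\{\Delta_{k+1}, \lambda_k/\sigma_k\}$, enlarging the radius whenever $\lambda_k/\sigma_k > \delta_k$). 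I would track $\lambda_k$ along this tail: on contraction steps $\lambda_k$ increases geometrically or by additive $\underline\sigma\Delta_k$, while $\sigma_k$ only changes through Steps~\ref{sigma-acc}, \ref{Sigma-rho_k}, and is non-decreasing; hence after an expansion, $\|\bs_{k+1}\|_2 = \delta_{k+1}$ with $\lambda_{k+1}/\|\bs_{k+1}\|_2 \leq \sigma_{k+1}$ (this is essentially the point of the expansion rule), and once $\lambda$ has been driven large enough by the contraction phases, Lemma~\ref{L3.8} again yields $\rho_k \geq \rho$ together with the ratio bound, forcing a step into $\mathcal{A}$ — contradiction. Using the lower bound $\cX_k \geq \epsilon$ together with the sufficient-decrease estimate \eqref{3.3} of Lemma~\ref{L3.3} and the lower bound $f \geq f_{min}$, one also sees that $\delta_k$ cannot stay bounded away from zero while steps keep being rejected, closing off the subcase where $\delta_k \not\to 0$.

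The main obstacle, I expect, is the bookkeeping for the case with infinitely many expansions: unlike in TRACE, the ratio $\lambda_k/\|\bs^*(\lambda)\|_2$ is not continuous (the paper's example~\eqref{example}), so I cannot simply invoke a continuity/intermediate-value argument to place $\lambda_k/\|\bs_k\|_2$ in $[\underline\sigma,\bar\sigma]$. Instead I must lean on the explicit structure of the modified CONTRACT subroutine — the choice $\bar\lambda = \lambda_k + \underline\sigma\Delta_k$ in Step~\ref{lambda-update-sigma}, the conditional update in Step~\ref{lambda-update-H}, and the $\gamma_\lambda$-scaling with the while-loop in Steps~\ref{lambda-bar-update-linear}–\ref{lambda-update-linear-while} — to show that every CONTRACT call both strictly shrinks $\delta$ and keeps $\lambda/\|\bs\|_2$ controlled, so that the accumulated growth of $\lambda_k$ forces the acceptance criteria (Step~\ref{Cond-acc}) to trigger within finitely many steps. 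I would isolate this as a short auxiliary claim (analogous to the ratio bounds promised in the description of CONTRACT) and then feed it, together with Lemmas~\ref{L3.3}, \ref{L3.7}, \ref{L3.8}, \ref{L3.9}, into the contradiction.
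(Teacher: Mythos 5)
Your first case is exactly the paper's proof: once the tail consists entirely of contraction steps, Lemma~\ref{L3.9} gives $\delta_k\to 0$ and $\lambda_k\to\infty$, and Lemma~\ref{L3.8} then forces $\rho_k\geq\rho$ at some tail iteration, contradicting $\rho_k<\rho$ for $k\in\mathcal{C}$. (Note that this contradiction is already complete at that point: you do not need to verify the ratio test $\lambda_k/\|\bs_k\|_2\leq\sigma_k$ to land in $\mathcal{A}$, because membership in $\mathcal{C}$ is defined solely by $\rho_k<\rho$. The extra machinery you propose to ``extract from the CONTRACT subroutine'' is unnecessary.)

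The problem is your second case. You read Lemma~\ref{L3.7} as only forbidding two \emph{consecutive} expansions, and therefore set up a case with infinitely many isolated expansions interleaved with contraction blocks. But Lemma~\ref{L3.7} says that if $k\in\mathcal{C}\cup\mathcal{E}$ then $k+1\notin\mathcal{E}$ --- an expansion cannot follow a contraction either. So once $|\mathcal{A}|<\infty$ and every $k\geq k_1$ lies in $\mathcal{C}\cup\mathcal{E}$, every $k\geq k_1+1$ is forced into $\mathcal{C}$: the tail contains at most one expansion, and your second case is vacuous. This matters because the argument you sketch for that case does not close on its own terms: Lemma~\ref{L3.9} is stated only for tails consisting entirely of contractions, so you cannot invoke it to get $\lambda_k\to\infty$ when expansions are interleaved (an expansion enlarges $\delta$ via Step~\ref{delta-exp} and can reset $\lambda$ downward), and the claims ``once $\lambda$ has been driven large enough'' and ``this is essentially the point of the expansion rule'' are assertions, not proofs. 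As written, your proof is therefore incomplete; the repair is simply to apply Lemma~\ref{L3.7} at full strength, which collapses everything into your first case.
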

	\begin{proof}
		To derive a contradiction, suppose that $|{\cal A}| < \infty$. We claim that this implies $|{\cal C}| = \infty$. Indeed, if $|{\cal C}| <\infty$, then there exist some $k_{\cal E} \in \mathbb{N}$ such that $k \in {\cal E}$ for all $k \geq k_{\cal E}$ , which contradicts Lemma \ref{L3.7}. Thus, $|{\cal C}|= \infty$. Combining this with the result of Lemma \ref{L3.7}, we conclude that there exists some $k_{\cal C} \in \mathbb{N}_{+}$ such that $k \in {\cal C}$ for all $k \geq k_{\cal C}$. It follows from Lemma \ref{L3.9} that $\{\|\bs_k\|_2\} \leq  \{\delta_k\} \rightarrow 0$ and $\{\lambda_k\} \rightarrow \infty$. In combination with Lemma \ref{L3.8}, we conclude that there exists some $k \geq k_{\cal C}$ such that $\rho_k \geq \rho$, which contradicts the fact that $k \in {\cal C}$ for all $k \geq k_{\cal C}$. Having arrived at a contradiction under the supposition that $|{\cal A}| <\infty$, the result follows.
	\end{proof}
	
	We now provide an upper bound for the sequence $\{\Delta_k\}$ and the trial steps $\{\bs_k\}$. Moreover, we show that the number of ${\cal A}_{\Delta}$ steps computed by the algorithm is finite.
	
	\begin{lemma}\label{L3.11}
		There exists a scalar constant $\Delta>0$ and $k_{\mathcal{A}}\in \mathbb{N}$, such that $\Delta_k = \Delta$ for all  $k \geq k_{\mathcal{A}}$. Moreover, the set ${\cal A}_{\Delta}$ has finite cardinality, and there exists a scalar
		constant $s_{max}>0$ such that $\|\bs_k\|_2 \leq s_{max}$ for all $k \in \mathbb{N}$.
	\end{lemma}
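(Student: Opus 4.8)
The plan is to prove the three assertions in order: first that $\Delta_k$ eventually stabilizes, then (using this) that $|\mathcal{A}_\Delta| < \infty$, and finally the uniform bound $\|\bs_k\|_2 \le s_{max}$. For the first assertion, recall from Lemma~\ref{L3.5-3.6} that $\{\Delta_k\}$ is non-decreasing, so it suffices to show it is bounded above; then it converges, and since it only changes by the rule $\Delta_{k+1} = \max\{\Delta_k, \gamma_E\|\bs_k\|_2\}$ at accepted steps with $\gamma_E > 1$, convergence of a non-decreasing sequence that jumps by multiplicative factors $\ge \gamma_E$ whenever it moves forces it to become eventually constant. To bound $\Delta_k$ above, I would argue by contradiction: if $\Delta_k \to \infty$, then there are infinitely many accepted steps $k$ with $\gamma_E\|\bs_k\|_2 > \Delta_{k-1}$, hence with $\|\bs_k\|_2$ growing without bound. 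But on such a step $\rho_k \ge \rho$, so $f_k - f_{k+1} = \rho_k \|\bs_k\|_2^3 \ge \rho\|\bs_k\|_2^3$, and summing over these steps would drive $f$ to $-\infty$, contradicting the lower bound $f_{min}$ in Assumption~\ref{Assumption1-LC}. This yields the scalar $\Delta > 0$ and index $k_{\mathcal{A}}$ with $\Delta_k = \Delta$ for all $k \ge k_{\mathcal{A}}$.

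For the second assertion, note that $k \in \mathcal{A}_\Delta$ means $\|\bs_k\|_2 = \Delta_k$. If such a step occurs with $k \ge k_{\mathcal{A}}$, then $\|\bs_k\|_2 = \Delta$, and by step~\ref{Delta-acc} we would set $\Delta_{k+1} = \max\{\Delta, \gamma_E \Delta\} = \gamma_E \Delta > \Delta$, contradicting $\Delta_{k+1} = \Delta$. Hence no $\mathcal{A}_\Delta$ step can occur after $k_{\mathcal{A}}$, so $\mathcal{A}_\Delta \subseteq \{0, 1, \dots, k_{\mathcal{A}} - 1\}$ is finite. For the third assertion, for $k \ge k_{\mathcal{A}}$ we have $\|\bs_k\|_2 \le \delta_k \le \Delta_k = \Delta$ by Lemma~\ref{L3.5-3.6}, while for the finitely many $k < k_{\mathcal{A}}$ each $\|\bs_k\|_2 \le \delta_k \le \Delta_k \le \Delta$ as well (using monotonicity of $\{\Delta_k\}$), so one may simply take $s_{max} = \Delta$, or more conservatively $s_{max} = \max\{\Delta, \max_{k < k_{\mathcal{A}}} \|\bs_k\|_2\}$ to be safe; in either case the bound holds for all $k \in \mathbb{N}$.

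The main obstacle I anticipate is making the boundedness-of-$\Delta_k$ argument fully rigorous. The subtlety is that $\Delta_k$ increases only at \emph{accepted} steps, and one must be careful that the subsequence of indices where $\Delta_k$ genuinely jumps (rather than staying put) is exactly a subsequence of $\mathcal{A}$ on which $\|\bs_k\|_2$ is comparable to $\Delta_k / \gamma_E$; since $\Delta_k \to \infty$ along this subsequence, $\|\bs_k\|_2 \to \infty$ there too, and then the telescoping sum $\sum (f_k - f_{k+1})$ over \emph{all} accepted steps — which is bounded by $f_{x_0} - f_{min}$ since $f$ is monotone non-increasing on accepted steps and constant on rejected ones — contains infinitely many terms of the form $\rho\|\bs_k\|_2^3$ that are unbounded, the desired contradiction. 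A clean way to organize this is to extract, for each integer $j$, an accepted step $k_j$ with $\|\bs_{k_j}\|_2 \ge j/\gamma_E$ and note these contribute at least $\rho (j/\gamma_E)^3$ each to a convergent (hence bounded-partial-sum) series, which is absurd.
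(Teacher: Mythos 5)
Your overall strategy is the same as the paper's: the sufficient-decrease inequality $f_k - f_{k+1} \ge \rho\|\bs_k\|_2^3$ on accepted steps, combined with the lower bound $f \ge f_{min}$, is what controls the growth of $\{\Delta_k\}$; and your treatment of the second and third assertions is correct (your observation that an $\mathcal{A}_\Delta$ step after $k_{\mathcal{A}}$ would force $\Delta_{k+1}=\gamma_E\Delta>\Delta$ is a clean variant of the paper's argument, and $s_{max}=\Delta$ works as you say).

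There is, however, one intermediate claim in your first step that is false as stated: that $\{\Delta_k\}$ ``jumps by multiplicative factors $\ge \gamma_E$ whenever it moves,'' so that boundedness plus monotone convergence forces eventual constancy. The update at an accepted step is $\Delta_{k+1}=\max\{\Delta_k,\gamma_E\|\bs_k\|_2\}$ (Step~\ref{Delta-acc}); when the sequence increases, the new value is $\gamma_E\|\bs_k\|_2$, which is $\gamma_E$ times $\|\bs_k\|_2$, not $\gamma_E$ times $\Delta_k$, and need only exceed $\Delta_k$ by an arbitrarily small margin. So a priori a bounded, convergent $\{\Delta_k\}$ could still increase infinitely often, and the claimed mechanism does not close the argument. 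The repair is immediate and uses exactly the machinery you already deploy for your boundedness contradiction: every iteration at which $\Delta_{k+1}>\Delta_k$ is an accepted step with $\gamma_E\|\bs_k\|_2>\Delta_k\ge\Delta_0$, hence $\|\bs_k\|_2>\Delta_0/\gamma_E>0$; if there were infinitely many such iterations, the telescoping bound $f_0-f_{min}\ge\sum_{k\in\mathcal{A}}\rho\|\bs_k\|_2^3$ would be violated. Hence only finitely many increases occur and $\{\Delta_k\}$ is eventually constant; note this also renders your separate boundedness-by-contradiction step unnecessary. (The paper phrases the same idea slightly differently: $|\mathcal{A}|=\infty$ by Lemma~\ref{L3.10} together with summability forces $\{\bs_k\}_{k\in\mathcal{A}}\to 0$, so eventually $\gamma_E\|\bs_k\|_2\le\Delta_0\le\Delta_k$ and the max in Step~\ref{Delta-acc} is attained by $\Delta_k$.)
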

	\begin{proof}
		For all $k \in {\cal A}$, we have $\rho_k \geq \rho$, which implies by Step~\ref{x-acc} of Algorithm~\ref{alg-LC} that
		\[f(\bx_k) - f(\bx_{k+1}) \geq \rho\|\bs_k\|_2^3.\]
		Combining this with Lemma \ref{L3.10} and the fact that $f$ is bounded below, it follows that $\{\bs_k\}_{k\in {\cal A}} \rightarrow 0$. In particular, there
		exists $k_{\cal A} \in \mathbb{N}$ such that for all $k \in {\cal A}$ with $k \geq k_{\cal A}$, we have
		\begin{equation}\label{3.8}
		\gamma_E\|\bs_k\|_2 \leq \Delta_0 \leq \Delta_k,  
		\end{equation}
		where the latter inequality follows from Lemma \ref{L3.5-3.6}. Combined with the update in Steps \ref{Delta-acc}, \ref{Delta-cont} and \ref{Delta-exp} of LC-TRACE, we get
		\[\Delta_{k+1} = \Delta_k \mbox{ for all } k \geq k_{\cal A}.\]
		This proves the first part of the lemma. The second part also follows from \eqref{3.8} which implies that $\|\bs_k\|_2 < \Delta_k$ for all $k \in {\cal A}$ with $k \geq k_{\cal A}$. Finally, the last part of the lemma follows from the first part and the fact that Lemma \ref{L3.5-3.6} ensures $\|\bs_k\|_2  \leq  \delta_k \leq  \Delta_k = \Delta$ for all sufficiently large $k \in \mathbb{N}$.
	\end{proof}
	
	We now show that there exits a uniform upper bound on the term $\gc$. 
	\begin{lemma}\label{LBG}
		For all $k \in \mathbb{N}$, $\gc \leq G_{max}$, where $G_{max}>0$ is a constant scalar. Moreover,
		\[\lambda_k \leq \max\{\lambda_0, \lambda_{max}\} \quad \forall \, k \in \mathbb{N},\]
		where $\lambda_{max} \triangleq \max\{g_{Lip} + 2H_{max} + (\rho +\underline{\sigma}) \Delta  + (\underline{\sigma} g_{max})^{1/2}, \, \gamma_{\lambda}(g_{Lip} + H_{max} + \rho\Delta)\}$.
	\end{lemma}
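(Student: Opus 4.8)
The plan is to derive the bound on $\gc$ from the bound on $\lambda_k$, so that the real work is the second claim. Rearranging the stationarity condition~\eqref{CC1} gives $\bg_k+\bA^T\blambda_k^C=-(\bH_k+\lambda_k\bI)\bs_k$, hence
\[
\gc=\|(\bH_k+\lambda_k\bI)\bs_k\|_2\le(H_{max}+\lambda_k)\,\|\bs_k\|_2\le\big(H_{max}+\max\{\lambda_0,\lambda_{max}\}\big)s_{max},
\]
using $\|\bH_k\|_2\le H_{max}$ and the uniform bound $\|\bs_k\|_2\le s_{max}$ from Lemma~\ref{L3.11}; this fixes $G_{max}$. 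It therefore suffices to show $\lambda_k\le M:=\max\{\lambda_0,\lambda_{max}\}$ for all $k$, which I would prove by induction on $k$, the base case $k=0$ being immediate.

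For the inductive step, assuming $\lambda_j\le M$ for all $j\le k$: if $\lambda_{k+1}=0$ there is nothing to prove, so by~\eqref{CC3} assume $\|\bs_{k+1}\|_2=\delta_{k+1}$. The organizing fact is Lemma~\ref{L3.8}: since $\|\bs_j\|_2\le\delta_j\le\Delta_j\le\Delta$ for every $j$ (Lemmas~\ref{L3.5-3.6} and~\ref{L3.11}), any iterate with $\lambda_j\ge\Lambda_0:=g_{Lip}+H_{max}+\rho\Delta$ satisfies $\rho_j\ge\rho$ and hence $j\notin{\cal C}$; in particular every contraction iteration has $\lambda_k<\Lambda_0$. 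Two further ingredients are a lower bound $\|\bs_j\|_2\gtrsim\epsilon$ coming from the step-norm estimate of Lemma~\ref{LXksk} together with the running assumption ${\cal X}_j\ge\epsilon$, and the elementary observation that the minimizer $\bs$ of a strongly convex quadratic $\bg_k^T\bs+\tfrac12\bs^T(\bH_k+\lambda\bI)\bs$ over the polyhedron — which is feasible at $\bzero$ — satisfies $\tfrac12(\lambda-H_{max})\|\bs\|_2\le\|\bg_k\|_2\le g_{max}$, so $\lambda$ is controlled once $\|\bs\|_2$ is bounded below. I would then split on the type of iteration $k$: if $k\in{\cal E}$, then $\bx_{k+1}=\bx_k$ and $\delta_{k+1}\ge\delta_k$ (Lemma~\ref{L3.5-3.6}), so enlarging the trust region on unchanged data cannot raise the multiplier and $\lambda_{k+1}\le\lambda_k\le M$; if $k\in{\cal A}$, then $\delta_{k+1}=\gamma_E\|\bs_k\|_2\gtrsim\epsilon$ and the quadratic estimate bounds $\lambda_{k+1}$; and if $k\in{\cal C}$ one inspects the value returned by CONTRACT, where $\lambda_k<\Lambda_0$. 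There, the additive update in Step~\ref{lambda-update-H} produces $\lambda=\lambda_k+\underline\sigma\Delta_k+H_{max}+(\underline\sigma{\cal X}_k)^{1/2}\le g_{Lip}+2H_{max}+(\rho+\underline\sigma)\Delta+(\underline\sigma g_{max})^{1/2}$ (using ${\cal X}_k\le\|\bg_k\|_2\le g_{max}$ and $\Delta_k\le\Delta$), the first term of $\lambda_{max}$; the value $\bar\lambda$ of Steps~\ref{lambda-update-sigma}/\ref{return-2} is smaller; and the multiplicative path through the loop at Step~\ref{while-loop} stays below $\gamma_\lambda\Lambda_0$, the second term of $\lambda_{max}$, because the loop terminates as soon as $\|\bs\|_2$ falls below $\|\bs_k\|_2\gtrsim\epsilon$, which occurs before $\lambda$ reaches $\gamma_\lambda\Lambda_0$ by the quadratic estimate. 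Finally, whenever CONTRACT returns $\delta_{k+1}=\|\bs\|_2$ for such a $\lambda$, the pair $(\bs,\lambda)$ is simultaneously feasible and KKT for $Q_{k+1}$, so $\lambda_{k+1}\le\lambda\le\lambda_{max}$, while the case $\delta_{k+1}=\gamma_C\|\bs_k\|_2$ reduces to the ${\cal A}$-case; assembling the cases gives $\lambda_{k+1}\le M$.

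The step I expect to be the main obstacle is the CONTRACT analysis just described. In unconstrained TRACE it is immediate: $\lambda\mapsto\bs^*(\lambda)$ is continuous, $\bH_k+\lambda\bI\succeq0$ for all $\lambda\ge\lambda_k$, and the returned radius is attained by exactly the chosen multiplier; in the linearly constrained setting all three can fail (cf.\ the example~\eqref{example}), which is precisely why CONTRACT is modified and why $\underline\sigma$ is chosen small as in~\eqref{sigma-min-max}. Checking that every branch of the modified subroutine — additive, multiplicative, and the fall-back $\gamma_C\|\bs_k\|_2$ — keeps $\lambda_{k+1}$ below the two explicit expressions defining $\lambda_{max}$, while respecting the mutual dependence between $\underline\sigma$ and $\lambda_{max}$ in~\eqref{sigma-min-max}, is the delicate part; the lower bound $\|\bs_j\|_2\gtrsim\epsilon$ from Lemma~\ref{LXksk} (with ${\cal X}_j\ge\epsilon$) is the key input preventing the multiplicative steps from accumulating.
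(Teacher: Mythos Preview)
Your overall structure --- reduce $G_{max}$ to a bound on $\lambda_k$, case-split on the iteration type, and handle CONTRACT branch by branch --- matches the paper, and your treatment of the additive branch (Step~\ref{lambda-update-H}) is exactly the paper's. Two genuine gaps remain, however.

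\textbf{Circularity through Lemma~\ref{LXksk}.} You invoke Lemma~\ref{LXksk} to obtain the lower bound $\|\bs_j\|_2\gtrsim\epsilon$, but the proof of Lemma~\ref{LXksk} uses the constant $G_{max}$ established in the present lemma (its $C_{min}$ is $H_{max}+G_{max}+g_{max}$). In the paper's logical order Lemma~\ref{LBG} comes first, and its proof never appeals to Lemma~\ref{LXksk}; quoting it here is circular.

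\textbf{The quadratic estimate does not deliver the stated constant.} Your inequality $\tfrac12(\lambda-H_{max})\|\bs\|_2\le g_{max}$ is correct, but it only forces $\|\bs\|_2<\|\bs_k\|_2$ once $\lambda>H_{max}+2g_{max}/\|\bs_k\|_2$. Even granting $\|\bs_k\|_2\gtrsim\epsilon$, this gives $\lambda_{k+1}=O(1/\epsilon)$, not $\gamma_\lambda\Lambda_0$; the same objection applies to your $\mathcal{A}$ case (where, incidentally, $\delta_{k+1}$ need not equal $\gamma_E\|\bs_k\|_2$; Step~\ref{delta-acc} takes a min/max). Since the lemma asserts the \emph{specific} constant $\lambda_{max}$, and since $\underline\sigma$ in~\eqref{sigma-min-max} is defined using $\lambda_{max}$, an $\epsilon$-dependent bound here would make $\underline\sigma$ depend on itself.

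The paper sidesteps both issues. For $k\in\mathcal{A}\cup\mathcal{E}$ it uses only Lemma~\ref{L3.5-3.6}: $\|\bs_{k+1}\|_2=\delta_{k+1}\ge\delta_k\ge\|\bs_k\|_2$, and the monotone dependence of the trust-region multiplier on the radius gives $\lambda_{k+1}\le\lambda_k$ directly --- no quadratic estimate, no lower bound on $\|\bs_k\|_2$. For the multiplicative branch of CONTRACT it argues by contradiction via a Lemma~\ref{L3.8}-type sufficient-decrease computation rather than the quadratic estimate: if the while loop were still active at some $\hat\lambda\ge\Lambda_0$ with $\|\hat\bs\|_2=\|\bs_k\|_2$, then combining Lemma~\ref{L3.3} (applied to $Q_k(\hat\lambda)$) with the first-order Taylor bound $q_k(\hat\bs)-f(\bx_k+\hat\bs)\ge-(g_{Lip}+\tfrac12H_{max})\|\hat\bs\|_2^2$ yields $(f_k-f(\bx_k+\hat\bs))/\|\hat\bs\|_2^3\ge\rho$, contradicting $\rho_k<\rho$. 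This produces exactly $\lambda_{k+1}\le\gamma_\lambda\Lambda_0$ with no reference to $\epsilon$ or to Lemma~\ref{LXksk}.
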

	\begin{proof}
		By \eqref{CC1} and Lemma \ref{L3.5-3.6},  
		\[\gc = \|\bH_k \bs_k + \lambda_k \bs_k\|_2 \leq (H_{max} + \lambda_k) \delta_k \leq (H_{max} + \lambda_k) \Delta.\]
		Thus, it suffices to find a constant upper bound for $\lambda_k$ to get the desired result.
		
		If $\|\bs_{k+1}\|_2 < \delta_{k+1}$, then by \eqref{CC3}, $\lambda_{k+1} = 0$. Therefore, we may proceed under the assumption that $\|\bs_{k+1}\|_2 = \delta_{k+1}$.
		Suppose $k \in {\cal C}$, then by Lemma \ref{L3.8}, $\lambda_{k} < g_{Lip} + H_{max} + \rho \Delta$.\\
		
		If Step~\ref{Cond-lower-bound-no} in the CONTRACT subroutine tests true, we get
		\begin{equation}\label{LBGex1}
		\begin{array}{ll}
		\lambda_{k+1} & \leq \lambda_k +H_{max}  + \underline{\sigma}\Delta_k + (\underline{\sigma}{\cal X}_k)^{1/2}\\
		& \leq  g_{Lip} + 2H_{max} + (\rho +\underline{\sigma}) \Delta  + (\underline{\sigma} g_{max})^{1/2}.
		\end{array}
		\end{equation}
		
		Otherwise, if Step~\ref{Cond-lower-bound-no} tests false, we claim that
		\begin{equation}\label{LBGex2}
		\lambda_{k+1} \leq \gamma_{\lambda}(g_{Lip} + H_{max} + \rho \Delta).
		\end{equation}
		
		To show our claim, we assume the contrary, i.e. $\lambda_{k+1} > \gamma_{\lambda}(g_{Lip} + H_{max} + \rho \Delta).$ Then the condition of the while loop in Step~\ref{while-loop} of the CONTRACT subroutine tested true for some $\hat{\bs}$ being a solution of $Q_{k}(\hat{\lambda})$ for $\hat{\lambda} \geq g_{Lip} + H_{max} + \rho \Delta$. 
		
		There exist $\hat{\bx}$ on the line segment $[\bx_{k} , \bx_{k} + \hat{\bs}]$ such that
		\begin{equation}\label{eq:l3.11.3}
		q_k(\hat{\bs}) - f(\hat{\bx} + \hat{\bs}) = \big(\bg_k - \bg(\hat{\bx}_k)\big)^T\bs_k +\dfrac{1}{2}\hat{\bs}^T\bH_k\hat{\bs} \geq -g_{Lip}\|\hat{\bs}\|_2^2 - \dfrac{1}{2}H_{max}\|\hat{\bs}\|_2^2.  
		\end{equation}
		
		Therefore, 
		\begin{flalign*}
		\dfrac{f(\hat{x}) - f(\hat{\bx} + \hat{\bs})}{\|\hat{\bs}\|_2^3} &= \dfrac{f(\hat{x}) - q_k(\hat{\bs}) + q_k(\hat{\bs}) - f(\hat{\bx} + \hat{\bs})}{\|\hat{\bs}\|_2^3}\\
		&\geq \dfrac{-\|\bH_k\|_2 + 2\hat{\lambda} -2g_{Lip} - H_{max}}{2\|\hat{\bs}\|_2}\\
		&\geq \dfrac{\hat{\lambda} - g_{Lip} - H_{max}}{\|\hat{\bs}\|_2}\\
		& \geq \rho,
		\end{flalign*}
		where the first inequality holds by Lemma \ref{L3.3} and \eqref{eq:l3.11.3}. Since 
		\[\rho_{k} = \dfrac{f_k - f(\bx_k + \bs_k)}{\|\bs_k\|_2^3} < \rho,\] it follows that $\|\hat{\bs}\|_2 \neq \|\bs_k\|_2$ which contradicts the condition of the while loop in Step~\ref{while-loop} which tested true for $\hat{\bs}$ generated by solving $Q_k(\hat\lambda)$.
		
		Combining \eqref{LBGex1} and \eqref{LBGex2}, we get that for all $k \in {\cal C}$
		\begin{equation}\label{LBGex3}
		\lambda_{k+1} \leq \lambda_{max},     
		\end{equation}
		where $\lambda_{max} \triangleq \max\{g_{Lip} + 2H_{max} + (\rho +\underline{\sigma}) \Delta  + (\underline{\sigma} g_{max})^{1/2}, \, \gamma_{\lambda}(g_{Lip} + H_{max} + \rho\Delta)\}$. Now, suppose that $k \in {\cal A} \cup {\cal E}$. By Lemma \ref{L3.5-3.6}, we have $\|\bs_{k+1}\|_2 = \delta_{k+1} \geq \delta_k \geq \|\bs_k\|_2.$ Hence, by the relationship between the trust region radius and its corresponding multiplier, we obtain
		\begin{equation}\label{LBGex4}
		\lambda_{k+1} \leq \lambda_k.
		\end{equation}
		
		Let $k_{\cal C} \triangleq \min\{k \in \mathbb{N} \, |\, k \in {\cal C}\}$ be the first contract step. By \eqref{LBGex4}, $\lambda_k \leq \lambda_0$ for all $k \leq k_{\cal C}$. Moreover, using \eqref{LBGex3} and \eqref{LBGex4},
		\[\lambda_k \leq \lambda_{max} \quad \forall \, k > k_{\cal C}.\]
		Combining these results yield 
		\[\lambda_k \leq \max\{\lambda_0, \lambda_{max}\} \quad \forall \, k \in \mathbb{N},\]
		which completes the proof.
	\end{proof}
	
	Notice that in the proof of Lemma \ref{LBG}, we have shown that there exists a uniform upper bound for the dual variables $\lambda_k$. Our next result shows that the ratio $\dfrac{{\cal X}_k}{\|\bs\|_2}$ is upper bounded by $C_{min} + \lambda_k$, where $C_{min}$ is a scalar constant.
	
	\begin{lemma}\label{LXksk}
		For any $k \in \mathbb{N}$, it holds
		\begin{equation}\label{eq:X_ks_k}
		{\cal X}_k \leq (C_{min} + \lambda_k)\|\bs_k\|_2,
		\end{equation}
		where $C_{min} \triangleq H_{max}  + G_{max} + g_{max}$ is a scalar constant.
	\end{lemma}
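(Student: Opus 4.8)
The plan is to combine the first--order KKT conditions \eqref{CC1}--\eqref{CC3} of the sub-problem $Q_k$ with the uniform bound $\gc \le G_{max}$ established in Lemma~\ref{LBG}. Let $\bar\bs$ be a minimizer in the definition \eqref{eq:X_k} of $\cX_k$; such a point exists since a linear functional is minimized over the nonempty compact set $\{\bs : \bx_k + \bs \in {\cal P},\ \|\bs\|_2 \le 1\}$. By definition $\cX_k = -\bg_k^T\bar\bs$, and substituting the stationarity relation \eqref{CC1} in the form $\bg_k = -(\bH_k + \lambda_k\bI)\bs_k - \bA^T\blambda_k^C$ yields
\[
\cX_k \;=\; \big((\bH_k+\lambda_k\bI)\bs_k\big)^T\bar\bs \;+\; (\blambda_k^C)^T\bA\bar\bs .
\]
I would then bound the two terms separately.

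For the first term, Cauchy--Schwarz together with $\|\bar\bs\|_2 \le 1$, the bound $\|\bH_k\|_2 \le H_{max}$ from Assumption~\ref{Assumption1-LC}, and $\lambda_k \ge 0$ from \eqref{CC3} give $\big((\bH_k+\lambda_k\bI)\bs_k\big)^T\bar\bs \le (H_{max}+\lambda_k)\|\bs_k\|_2$. For the second term, since $\blambda_k^C \ge \bzero$ and $\bar\bs$ is feasible for the linear constraints of $Q_k$ (because $\bx_k + \bar\bs \in {\cal P}$ forces $\bA\bar\bs \le \bb - \bA\bx_k$), we get $(\blambda_k^C)^T\bA\bar\bs \le (\blambda_k^C)^T(\bb - \bA\bx_k)$; complementary slackness \eqref{CC2} then replaces the right-hand side by $(\blambda_k^C)^T\bA\bs_k = (\bA^T\blambda_k^C)^T\bs_k$. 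Writing $\bA^T\blambda_k^C = (\bg_k + \bA^T\blambda_k^C) - \bg_k$ and invoking Lemma~\ref{LBG} together with $\|\bg_k\|_2 \le g_{max}$ yields $\|\bA^T\blambda_k^C\|_2 \le G_{max} + g_{max}$, hence $(\bA^T\blambda_k^C)^T\bs_k \le (G_{max}+g_{max})\|\bs_k\|_2$. Adding the two estimates gives $\cX_k \le (H_{max}+G_{max}+g_{max}+\lambda_k)\|\bs_k\|_2 = (C_{min}+\lambda_k)\|\bs_k\|_2$, which is the claim.

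The argument is short and there is no genuine obstacle; the only step requiring care is the treatment of the inequality-constraint multiplier $\blambda_k^C$, where feasibility of $\bar\bs$, nonnegativity of $\blambda_k^C$, and complementary slackness must be chained in the right order so as to reduce the estimate to the already-known bound on $\gc$. The degenerate case $\cX_k = 0$ makes the inequality trivial, so no separate case analysis is needed.
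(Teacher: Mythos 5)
Your proof is correct and follows essentially the same route as the paper: both substitute the KKT stationarity condition \eqref{CC1} into $-\bg_k^T\bd$, use complementary slackness \eqref{CC2} and feasibility to replace $(\blambda_k^C)^T\bA\bd$ by $(\blambda_k^C)^T\bA\bs_k$, and bound $\|\bA^T\blambda_k^C\|_2\le G_{max}+g_{max}$ via Lemma~\ref{LBG} and the triangle inequality. The only cosmetic difference is that you work directly with a minimizer $\bar\bs$ of \eqref{eq:X_k} while the paper bounds $\bg_k^T\bd$ for arbitrary feasible $\bd$ and then minimizes.
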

	
	\begin{proof}
		Let $\xi_{k,1}$ be the largest singular value of $\bH_k$. For all $\bd$ satisfying $\bA\bd \leq \bb - \bA\bx_k$, we have
		\begin{flalign}\label{eq:proofL3.12}
		\bg_k^T\bd &= -\bd^T(\bH_k + \lambda_k\bI)\bs_k^T - (\blambda_k^C)^T\bA\bd \nonumber \\
		&\geq -\bd^T(\bH_k + \lambda_k\bI)\bs_k^T - (\blambda_k^C)^T\bA\bs_k \nonumber \\
		&\geq -(\xi_{k,1} + \lambda_k)\|\bd\|_2\|\bs_k\|_2 - (\blambda_k^C)^T\bA\bs_k,
		\end{flalign}
		where the first equality holds by \eqref{CC1}, and the first inequality holds by  complementary slackness \eqref{CC2}. Minimizing over all such $\bd$, we obtain
		\begin{flalign*}
		\underset{\bA\bd \leq \bb - \bA\bx_k, \, \, \|\bd\|\leq 1}{\min} \,\,\bg_k^T\bd &\geq -(\xi_{k,1} + \lambda_k)\|\bs_k\|_2-\|(\blambda_k^C)^T\bA\|_2\|\bs_k\|_2\\
		&\geq -(\xi_{k,1} + \lambda_k)\|\bs_k\|_2-(\|g_k + (\blambda_k^C)^T\bA\|_2+ \|g_k\|_2)\|\bs_k\|_2\\
		&\geq -(H_{max} + \lambda_k)\|\bs_k\|_2 - (G_{max} + g_{max})\|\bs_k\|_2,
		\end{flalign*}
		where the last inequality uses Lemma \ref{LBG}. Then definition of ${\cal X}_k$ yields
		\begin{flalign*}
		{\cal X}_k &\leq ( H_{max} + \lambda_k + G_{max} + g_{max})\|\bs_k\|_2\\
		&=(C_{min}+\lambda_k)\|\bs_k\|_2,
		\end{flalign*}
		where $C_{min} \triangleq H_{max} +  G_{max} + g_{max}$ is a scalar constant.
	\end{proof}
	
	We now show that the limit inferior of stationarity measure ${\cal X}_k$ is equal to zero.
	
	\begin{lemma}\label{L3.13}
		There holds
		\[\displaystyle{ \operatornamewithlimits{\mbox{lim inf}}_{k \in \mathbb{N}, \, k \rightarrow \infty}} {\cal X}_k =0.\]
	\end{lemma}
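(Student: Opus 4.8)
The plan is a direct argument: I will show that the accepted steps drive both $\|\bs_k\|_2$ and the stationarity measure $\mathcal{X}_k$ to zero, so that $\mathcal{X}_k \to 0$ along the infinite index set $\mathcal{A}$, which already forces $\liminf_{k\to\infty}\mathcal{X}_k = 0$ because $\mathcal{X}_k \geq 0$ for every feasible iterate.

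The first step uses the sufficient-decrease property of accepted steps together with the lower boundedness of $f$. For each $k \in \mathcal{A}$, Step~\ref{x-acc} of Algorithm~\ref{alg-LC} and the acceptance test $\rho_k \geq \rho$ give $f(\bx_k) - f(\bx_{k+1}) \geq \rho\|\bs_k\|_2^3$; for $k \notin \mathcal{A}$ one has $\bx_{k+1} = \bx_k$ and hence $f(\bx_{k+1}) = f(\bx_k)$. Thus $\{f(\bx_k)\}$ is non-increasing, and telescoping over all iterations yields $\rho\sum_{k\in\mathcal{A}}\|\bs_k\|_2^3 \leq f(\bx_0) - f_{min} < \infty$. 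Since $|\mathcal{A}| = \infty$ by Lemma~\ref{L3.10}, the series converges and therefore $\|\bs_k\|_2 \to 0$ as $k \to \infty$ with $k \in \mathcal{A}$.

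The second step converts this into a statement about $\mathcal{X}_k$. By Lemma~\ref{LXksk}, $\mathcal{X}_k \leq (C_{min} + \lambda_k)\|\bs_k\|_2$, and by Lemma~\ref{LBG} the multipliers are uniformly bounded, $\lambda_k \leq \max\{\lambda_0, \lambda_{max}\}$ for all $k$. Hence $\mathcal{X}_k \leq (C_{min} + \max\{\lambda_0, \lambda_{max}\})\|\bs_k\|_2$, so combining with the first step we obtain $\mathcal{X}_k \to 0$ along $k \in \mathcal{A}$, and since $\mathcal{A}$ is infinite this gives $\liminf_{k\to\infty}\mathcal{X}_k = 0$. (In the context of the standing convention $\mathcal{X}_k \geq \epsilon$, this is precisely what shows the algorithm cannot run forever.)

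This argument has no real obstacle once Lemmas~\ref{L3.10}, \ref{LXksk}, and~\ref{LBG} are available; the only points deserving care are that $\{f(\bx_k)\}$ is genuinely monotone so the telescoping sum restricted to $\mathcal{A}$ is legitimate, and that one invokes the \emph{uniform} multiplier bound of Lemma~\ref{LBG} (not merely finiteness of each individual $\lambda_k$), so that the coefficient multiplying $\|\bs_k\|_2$ stays bounded along $\mathcal{A}$ and the conclusion $\mathcal{X}_k\to 0$ is not spoiled by growth of the multipliers.
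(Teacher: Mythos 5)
Your proof is correct and uses exactly the same ingredients as the paper's (Lemma~\ref{L3.10} for $|\mathcal{A}|=\infty$, the sufficient decrease $f_k-f_{k+1}\geq\rho\|\bs_k\|_2^3$ on accepted steps with lower boundedness of $f$, and Lemmas~\ref{LXksk} and~\ref{LBG} to convert $\|\bs_k\|_2\to 0$ into a bound on $\mathcal{X}_k$); the only difference is that you argue directly while the paper phrases the same chain as a proof by contradiction. Your direct version in fact yields the slightly stronger conclusion $\mathcal{X}_k\to 0$ along $k\in\mathcal{A}$.
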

	
	\begin{proof}
		Suppose the contrary that there exists a scalar constant  ${\cal X}_{min}  > 0$ such that ${\cal X}_k \geq {\cal X}_{min}$ for all $k \in \mathbb{N}$. Then by Lemmas \ref{LBG} and \ref{LXksk}, for scalar
		\[s_{min}  = \dfrac{{\cal X}_{min}}{C_{min} + \max\{\lambda_{max}, \lambda_0\}}\,,\]
		we have that $\|\bs_k\|_2 \geq s_{min} > 0$ for all $k \in \mathbb{N}$. Moreover, for all $k \in {\cal A}$ we have $f_k - f_{k+1} \geq \rho\|\bs_k\|_2^3 > 0$. Given the lower boundedness of $f$ and Lemma \ref{L3.10} that insures infinite cardinality  of set ${\cal A}$,  we have $\{\bs_k \}_{k \in {\cal A}} \rightarrow 0$. This contradicts the existence of $s_{min} > 0$. 
	\end{proof}
	
	\begin{theorem}\label{L3.14}
		Under Assumption \ref{Assumption1-LC}, it holds that
		\begin{equation}\label{3.11}
		\lim_{k \in \mathbb{N}, \, k \rightarrow \infty} \, {\cal X}_k =0.
		\end{equation}
	\end{theorem}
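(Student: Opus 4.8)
The plan is to strengthen the \emph{liminf} statement of Lemma \ref{L3.13} to a genuine limit, using a standard argument by contradiction combined with the sufficient-decrease and radius-control machinery already established. Suppose \eqref{3.11} fails. Then there is a constant $\epsilon_0 > 0$ and an infinite subsequence of indices along which $\cX_k \geq 2\epsilon_0$. By Lemma \ref{L3.13}, the stationarity measure $\cX_k$ also returns infinitely often to values below $\epsilon_0$. So the sequence $\{\cX_k\}$ oscillates: there are infinitely many ``excursions'' of the iterates during which $\cX_k$ climbs from below $\epsilon_0$ up past $2\epsilon_0$. The continuity of $\cX(\cdot)$ from Lemma \ref{lem:StationarityContinuous}, combined with the Lipschitz continuity of $\bg$ and $\bH$ from Assumption \ref{Assumption1-LC}, means that for the iterate value to change by a fixed amount the iterates themselves must move by a bounded-below amount; more precisely, since $\cX$ changes only on accepted steps (on contraction and expansion steps $\bx_{k+1} = \bx_k$), each excursion must contain accepted steps whose cumulative displacement $\sum \|\bs_k\|_2$ is bounded below by some fixed $\delta_0 > 0$ depending on $\epsilon_0$, the Lipschitz constants, and the bound $C_{min} + \max\{\lambda_{max},\lambda_0\}$ from Lemma \ref{LXksk}.

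Next I would extract a contradiction from the total decrease in $f$. On each accepted step we have $f_k - f_{k+1} \geq \rho \|\bs_k\|_2^3$, and on contraction and expansion steps $f$ is unchanged, so $f$ is monotonically non-increasing overall. Since $f$ is bounded below by $f_{min}$, the series $\sum_{k \in {\cal A}} \|\bs_k\|_2^3$ converges; in particular $\{\bs_k\}_{k\in{\cal A}} \to 0$. Now consider a single excursion indexed by $k \in \{p, p+1, \ldots, q\}$ with $\cX_p \leq \epsilon_0$, $\cX_q \geq 2\epsilon_0$, and $\epsilon_0 \leq \cX_k \leq 2\epsilon_0$ in between. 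Over this excursion the gradient and Hessian are Lipschitz and the steps $\bs_k$ are small (for late excursions), so one obtains a bound of the form $\epsilon_0 \leq |\cX_q - \cX_p| \leq L' \sum_{k \in {\cal A} \cap [p,q]} \|\bs_k\|_2$ for a suitable constant $L'$. Hence $\sum_{k \in {\cal A} \cap [p,q]} \|\bs_k\|_2 \geq \epsilon_0 / L'$ on each excursion, and because $\|\bs_k\|_2 \leq s_{max}$ (Lemma \ref{L3.11}) this forces $\sum_{k \in {\cal A} \cap [p,q]} \|\bs_k\|_2^3 \geq (\epsilon_0/L')^3 / (\text{number of steps})^2$—so I should instead lower-bound the cubic sum directly. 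The cleaner route: on the excursion, either some single accepted step has $\|\bs_k\|_2 \geq c$ for a fixed $c$ (giving a fixed decrease $\rho c^3$), or all steps are tiny and the ratio $\cX_k / \|\bs_k\|_2 \leq C_{min} + \lambda_k$ together with the $\Delta_k$-boundedness lets me sum up the displacements to get $\sum \|\bs_k\|_2$ bounded below, whence $\sum \|\bs_k\|_2^3$ is bounded below by a fixed amount (using $\|\bs_k\|_2 \leq s_{max}$ so that $\|\bs_k\|_2^3 \geq \|\bs_k\|_2 \cdot (\text{something})$ requires care—one uses $\|\bs_k\|_2^2 \geq (\text{displacement contribution})$). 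In any case each of the infinitely many excursions contributes at least a fixed positive amount to $\sum_{k\in{\cal A}} (f_k - f_{k+1})$, contradicting the lower boundedness of $f$.

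The main obstacle is the step I flagged above: passing from ``the iterates move by a bounded-below \emph{total length} $\sum\|\bs_k\|_2$ over an excursion'' to ``the \emph{cubic} sum $\sum\|\bs_k\|_2^3$ is bounded below'', since an excursion could in principle be made of very many very small steps. The resolution is to use Lemma \ref{LXksk} and Lemma \ref{LBG}: during an excursion $\cX_k \geq \epsilon_0$, so $\|\bs_k\|_2 \geq \epsilon_0 / (C_{min} + \max\{\lambda_{max}, \lambda_0\}) =: s_{min} > 0$ for \emph{every} accepted step in the excursion. Thus every accepted step in an excursion has $\|\bs_k\|_2 \geq s_{min}$, so each such step alone yields $f_k - f_{k+1} \geq \rho s_{min}^3$; since Lemma \ref{L3.10} guarantees $|{\cal A}| = \infty$ and the excursions are infinitely many, there are infinitely many accepted steps with $\|\bs_k\|_2 \geq s_{min}$, which already contradicts $\{\bs_k\}_{k\in{\cal A}}\to 0$ (equivalently, contradicts $\sum_{k\in{\cal A}}\|\bs_k\|_2^3 < \infty$). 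This is essentially the same contradiction as in Lemma \ref{L3.13}, but localized to excursion indices; the only new ingredient needed is that the set of indices with $\cX_k \geq \epsilon_0$ still contains infinitely many accepted steps, which follows because on non-accepted steps $\bx_k$ (hence $\cX_k$) is frozen, so an excursion from $\cX \leq \epsilon_0$ up to $\cX \geq 2\epsilon_0$ cannot happen without accepted steps in between.
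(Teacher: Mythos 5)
Your overall strategy --- contradiction via ``excursions'' of $\cX_k$ from below $\epsilon_0$ up to $2\epsilon_0$, using that $\cX_k$ is frozen on contraction and expansion steps and that $\{\bs_k\}_{k\in\mathcal{A}}\to 0$ --- is the same skeleton as the paper's proof, and your proposed shortcut is a legitimate and arguably cleaner mechanism: Lemma~\ref{LXksk} together with Lemma~\ref{LBG} gives $\|\bs_k\|_2 \geq \cX_k/(C_{min}+\max\{\lambda_{max},\lambda_0\})$, so any \emph{accepted} step taken at an index with $\cX_k\geq\epsilon_0$ has norm at least a fixed $s_{min}>0$, which is incompatible with $\{\bs_k\}_{k\in\mathcal{A}}\to 0$. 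The paper instead routes through the Cauchy-decrease bound \eqref{3.3} of Lemma~\ref{L3.3} to obtain $f_k-f_{k+1}\geq \tfrac{C}{2}\epsilon_{\cX}\delta_k$ on the excursion, telescopes to bound the total displacement $\|\bx_{t_i}-\bx_{l_i}\|_2\leq \tfrac{2}{C\epsilon_{\cX}}(f_{t_i}-f_{l_i})\to 0$, and concludes by continuity of $\cX(\cdot)$; your route avoids the Cauchy decrease entirely.

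However, your closing claim --- that the set of indices with $\cX_k\geq\epsilon_0$ ``contains infinitely many accepted steps \dots because an excursion cannot happen without accepted steps in between'' --- has a genuine gap. An excursion from $\cX_{p}\leq\epsilon_0$ to $\cX_{q}\geq 2\epsilon_0$ does require at least one accepted step, but the \emph{first} accepted step of the excursion occurs at an index where $\cX_k\leq\epsilon_0$ (indeed, if $p$ is chosen maximal with $\cX_p\leq\epsilon_0$, then $p$ itself must be accepted since $\cX_{p+1}>\cX_p$), and at that index Lemma~\ref{LXksk} gives you no useful lower bound on $\|\bs_p\|_2$. A priori every excursion could consist of a \emph{single} accepted step that carries $\cX$ from an arbitrarily small value to above $2\epsilon_0$ in one jump; then no accepted step is ever taken at an index with $\cX_k\geq\epsilon_0$ and your contradiction evaporates. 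To close the gap you must rule out these one-step excursions, for instance: since $\{\bs_k\}_{k\in\mathcal{A}}\to 0$ and $\cX(\cdot)$ is (uniformly) continuous along the iterates, a single accepted step cannot change $\cX$ by the amount $\epsilon_0$ once $k$ is large; hence every late excursion contains at least two accepted steps, and the second one sits at an index with $\cX_k>\epsilon_0$, so $\|\bs_k\|_2\geq s_{min}$, yielding the desired contradiction. This continuity step is the same one the paper uses (implicitly) at the very end of its own proof, so it costs nothing extra, but it must appear explicitly in your argument; as written, the proof is incomplete.
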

	
	\begin{proof}
		Suppose the contrary that \eqref{3.11} does not hold. Combined with Lemmas \ref{L3.10} and \ref{L3.13}, it implies that there exist an infinite sub-sequence $\{t_i\} \subseteq {\cal A}$ (indexed over $i \in \mathbb{N}$) such that ${\cal X}_{t_i}  \geq 2\epsilon_{\cal X}$ for some $\epsilon_{\cal X} > 0$ and all $i \in \mathbb{N}$. Additionally, Lemmas \ref{L3.10} and \ref{L3.13} imply that there exist an infinite subsequence $\{l_i\} \subseteq {\cal A}$ such that
		\begin{equation}\label{3.12}
		{\cal X}_k \geq \epsilon_{\cal X} \mbox{ and } {\cal X}_{l_i}  < \epsilon_{\cal X} \quad \forall \, \, i\in \mathbb{N}, \,\, k \in \mathbb{N}, \, \, t_i \leq k <l_i.
		\end{equation}
		
		We claim that for all $k \in \mathbb{N}_{+}$, the trial step $\bs_k$ satisfies the following
		\begin{equation}\label{eq:Lproof3.14}
		\|\bs_k\|_2 \geq \min \Big\{\delta_k, \dfrac{{\cal X}_k}{C_{min}} \Big\}.
		\end{equation}
		The proof of this claim follows directly from Lemma \ref{LXksk}. If $\|\bs_k\|_2 = \delta_k$, the result trivially holds. Otherwise, using KKT condition \eqref{CC3}, $\lambda_k=0$ which proves our claim when combined with Lemma \ref{LXksk}. 
		
		We now restrict our attention to indices in the infinite index set
		\[{\cal K} \triangleq \{k \in {\cal A}: t_i \leq k < l_i \mbox{ for some } i \in \mathbb{N}\}.\]
		Observe from \eqref{3.12} and \eqref{eq:Lproof3.14} that
		\begin{equation}\label{3.13}
		f_k - f_{k+1} \geq \rho \|\bs_k\|_2^3 \geq \rho \Big(\min\Big\{\delta_k , \dfrac{\epsilon_{\cal X}}{C_{min}}\Big\}\Big)^{3}.
		\end{equation}
		
		Since $\{ f_k \}$ is monotonically decreasing and bounded below, we know that $f_k \rightarrow \underline{f}$ for some $\underline{f} \in \mathbb{R}$. When combined with \eqref{3.13}, we obtain
		\begin{equation}\label{3.14}
		\lim_{k \in {\cal K}, k \rightarrow \infty} \delta_k =0.
		\end{equation}
		Using this fact and Lemma \ref{L3.3}, we have for all sufficiently large $k \in {\cal K}$ that
		\begin{flalign*}
		f_k - f_{k+1} & =f_k - q_k(\bs_k) +q_k(\bs_k)  - f_{k+1}\\
		&\geq C{\cal X}_k \min \Big\{\delta_k, \dfrac{{\cal X}_k}{\|\bH_k\|_2}, 1 \Big\} - (g_{Lip} + \dfrac{1}{2}H_{max})\|\bs_k\|_2^2\\
		& \geq  C\epsilon_{\cal X} \min \Big\{\delta_k, \dfrac{\epsilon_{\cal X}}{H_{max}}, 1 \Big\} - (g_{Lip} + \dfrac{1}{2}H_{max})\|\bs_k\|_2^2\\
		& \geq  C\epsilon_{\cal X}\delta_k - (g_{Lip} + \dfrac{1}{2}H_{max})\delta_k^2\\
		& \geq  \dfrac{C}{2}\epsilon_{\cal X}\delta_k.
		\end{flalign*}
		Consequently, for all sufficiently large $i \in \mathbb{N}$, we have
		\begin{flalign*}
		\|\bx_{t_i}  - \bx_{l_i}\|_2 &\leq \sum_{k \in {\cal K}, k=t_i}^{l_i -1} \|\bx_k - \bx_{k+1}\|_2\\
		& \leq \sum_{k \in {\cal K}, k=t_i}^{l_i -1} \delta_k  \leq \sum_{k \in {\cal K}, k=t_i}^{l_i -1} \dfrac{2}{C\epsilon_{\cal X}} (f_k - f_{k+1}) = \dfrac{2}{C\epsilon_{\cal X}}(f_{t_i} - f_{l_i}).
		\end{flalign*}
		Since $\{ f_{t_i} -  f_{l_i}\} \rightarrow 0$, we get $\{\|\bx_{t_i} -\bx_{l_i}\|_2\}\rightarrow 0$, which, in turn, implies that $\{{\cal X}_{t_i} - {\cal X}_{l_i}\} \rightarrow 0$. This contradicts  \eqref{3.12}. 
	\end{proof}

	\subsection{Proof of Theorem \ref{epsilon-first-order-convergence-complexity}}\label{epsilon-first-order-convergence-complexity-app}
	In this section we show that the number of iterations required to reach an $\epsilon$-first order stationary point is ${\cal O}(\epsilon^{-3/2}\log^3 \epsilon^{-1})$. To that end, we start by showing the desired model decrease using Assumption \ref{Assumption 3}.
	
	\begin{lemma}\label{L4.5_ACR}
		Consider the directions $\bs$, $\bs^{+}$ and points $\bx = \bx_k + \bs$, $\bx^{+} = \bx_k + \bs^{+}$. If for some $\bar{\kappa} \in (0,1]$
		\begin{flalign}
		&f_k -q_k(\bs) \geq  \bar{\kappa} \lambda_k \|\bs\|_2^2, \label{eq:l1}\\
		&q_k(\bs^{+}) \leq q_k (\bs), \label{eq:l2}\\
		&\bg_k^T(\bs^{+} - \bs) + (\bs^{+})^T\bH_k(\bs^{+} - \bs) \leq -\lambda_k (\bs^{+})^T(\bs^{+} - \bs),  \label{eq:l3}\\
		&\bg_k^T(\bs^{+} - \bs) + \bs^T\bH_k(\bs^{+} - \bs) \leq -\lambda_k \bs^T(\bs^{+} - \bs),  \label{eq:l4}
		\end{flalign}
		then 
		\[f_k -q_k(\bs^+) \geq \dfrac{1}{3}\bar{\kappa} \lambda_k\|\bs^{+}\|_2^2.\]
		
	\end{lemma}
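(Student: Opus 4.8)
The plan is to reduce everything to a short algebraic identity plus an elementary case analysis; the constraints play no role here, since the hypotheses \eqref{eq:l1}--\eqref{eq:l4} are already stated purely in terms of $q_k$, $\bg_k$, $\bH_k$ and $\lambda_k$. First dispose of the degenerate case $\lambda_k=0$: then the claimed bound reads $f_k-q_k(\bs^+)\ge 0$, which follows at once from \eqref{eq:l2} and \eqref{eq:l1}, namely $f_k-q_k(\bs^+)\ge f_k-q_k(\bs)\ge\bar\kappa\lambda_k\|\bs\|_2^2=0$. So I may assume $\lambda_k>0$ (recall $\lambda_k\ge 0$ as the trust-region multiplier).

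Next I would expand the model difference. Using $q_k(\bs)=f_k+\bg_k^T\bs+\tfrac12\bs^T\bH_k\bs$ and symmetry of $\bH_k$,
\[
q_k(\bs^+)-q_k(\bs)=\bg_k^T(\bs^+-\bs)+\tfrac12\,(\bs^+-\bs)^T\bH_k(\bs^++\bs).
\]
Then I would add \eqref{eq:l3} and \eqref{eq:l4}, after rewriting each Hessian term via symmetry so that the two combine into $(\bs^+-\bs)^T\bH_k(\bs^++\bs)$, and divide by two. The left-hand side becomes exactly the expression above, while the right-hand side becomes $-\tfrac12\lambda_k(\bs^+-\bs)^T(\bs^++\bs)=-\tfrac12\lambda_k\big(\|\bs^+\|_2^2-\|\bs\|_2^2\big)$. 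This produces the key estimate
\[
q_k(\bs)-q_k(\bs^+)\ \ge\ \tfrac12\lambda_k\big(\|\bs^+\|_2^2-\|\bs\|_2^2\big).
\]

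Combining this with $f_k-q_k(\bs)\ge\bar\kappa\lambda_k\|\bs\|_2^2$ from \eqref{eq:l1} and with $f_k-q_k(\bs^+)\ge f_k-q_k(\bs)$ from \eqref{eq:l2} gives two lower bounds,
\[
f_k-q_k(\bs^+)\ \ge\ \bar\kappa\lambda_k\|\bs\|_2^2
\qquad\text{and}\qquad
f_k-q_k(\bs^+)\ \ge\ \big(\bar\kappa-\tfrac12\big)\lambda_k\|\bs\|_2^2+\tfrac12\lambda_k\|\bs^+\|_2^2 .
\]
I would then split on the relative size of the two steps. If $\|\bs\|_2^2\ge\tfrac13\|\bs^+\|_2^2$, the first bound already yields $f_k-q_k(\bs^+)\ge\tfrac13\bar\kappa\lambda_k\|\bs^+\|_2^2$. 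Otherwise $\|\bs\|_2^2<\tfrac13\|\bs^+\|_2^2$, and I use the second bound: when $\bar\kappa\ge\tfrac12$ the first term is nonnegative and $\tfrac12\ge\tfrac13\bar\kappa$ (since $\bar\kappa\le1$) closes it; when $\bar\kappa<\tfrac12$ the coefficient $\bar\kappa-\tfrac12$ is negative, so replacing $\|\bs\|_2^2$ by the strictly larger $\tfrac13\|\bs^+\|_2^2$ only decreases that term, leaving $f_k-q_k(\bs^+)\ge\big(\tfrac13\bar\kappa+\tfrac13\big)\lambda_k\|\bs^+\|_2^2\ge\tfrac13\bar\kappa\lambda_k\|\bs^+\|_2^2$.

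The only genuinely delicate step is the reduction that turns the sum of \eqref{eq:l3} and \eqref{eq:l4} into a bound on $q_k(\bs^+)-q_k(\bs)$ — one has to match the quadratic cross terms carefully — and everything after it is routine arithmetic. This mirrors the corresponding model-decrease argument in \cite{cartis2012adaptive}.
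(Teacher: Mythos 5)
Your proof is correct and follows essentially the same route as the paper's: you add \eqref{eq:l3} and \eqref{eq:l4} to obtain the key estimate $q_k(\bs^+)-q_k(\bs)\le-\tfrac12\lambda_k(\|\bs^+\|_2^2-\|\bs\|_2^2)$ and then split on whether $\|\bs\|_2^2\ge\tfrac13\|\bs^+\|_2^2$, exactly as the paper does with its threshold $\alpha^2=1/3$. The only differences are cosmetic: the paper leaves $\alpha$ generic and balances the two bounds at the end, and in the second case it simply drops the $f_k-q_k(\bs)$ term and uses $\bar\kappa\le1$, while you retain it and handle the sign of $\bar\kappa-\tfrac12$ explicitly.
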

	
	\begin{proof}
		Suppose that for a given constant scalar $\alpha \in (0,1)$, $\|\bs\|_2 \geq \alpha \|\bs^{+}\|_2$. Then, it directly follows by~\eqref{eq:l1} and \eqref{eq:l2} that 
		\begin{equation}\label{eq:L4.5_ARC_1}
		f_k - q_k(\bs^{+}) = f_k - q_k(\bs) + q_k(\bs) - q_k(\bs^{+})\geq \bar{\kappa}\lambda_k\|\bs\|_2^2 \geq \bar{\kappa} \alpha^2 \lambda_k\|\bs^{+}\|_2^2.
		\end{equation}
		Now consider the case that $\|\bs\|_2 < \alpha \|\bs^{+}\|_2$. First note that by \eqref{eq:l3}, 
		\begin{equation}\label{eq:l4.5-1}
		\begin{array}{ll}
		0 &  \geq  (\bg_k + \bH_k \bs^{+})^T(\bs^{+} - \bs) +\lambda_k (\bs^{+})^T(\bs^{+} - \bs)\\
		& = (\bg_k + \bH_k \bs)^T(\bs^{+} - \bs)  +(\bs^{+} - \bs)^T\bH_k(\bs^{+} - \bs) +\lambda_k (\bs^{+})^T(\bs^{+} - \bs).
		\end{array}
		\end{equation}
		Also, by \eqref{eq:l4}
		\begin{equation}\label{eq:l4.5-2}
		(\bg_k + \bH_k \bs)^T(\bs^{+} - \bs) + \lambda_k\bs^T(\bs^{+} - \bs) \leq 0.
		\end{equation}
		
		Adding \eqref{eq:l4.5-1} and \eqref{eq:l4.5-2}, we get
		\begin{equation}\label{eq:l11}
		\begin{array}{ll}
		q_k(\bs^{+}) - q_k(\bs)  &= (\bg_k + \bH_k \bs)^T(\bs^{+} - \bs) + \dfrac{1}{2}(\bs^{+} - \bs)^T\bH_k (\bs^{+} - \bs)\\
		&\leq -\dfrac{1}{2}\lambda_k (\|\bs^{+}\|_2^2 - \|\bs\|_2^2).
		\end{array}
		\end{equation}
		Since $\|\bs\|_2 < \alpha \|\bs^{+}\|_2$, it follows that
		\begin{equation}\label{eq:L4.5_ARC_2}
		f_k - q_k(\bs^{+}) \geq q_k(\bs) - q_k(\bs^{+}) \geq \dfrac{1}{2}\lambda_k (\|\bs^{+}\|_2^2 - \|\bs\|_2^2) \geq \dfrac{1}{2}\lambda_k\bar{\kappa} \|\bs^{+}\|_2^2 (1- \alpha^2),
		\end{equation}
		where the first inequality holds by \eqref{eq:l1}, the second inequality holds by \eqref{eq:l11}, and the last inequality holds because  $\|\bs\|_2 < \alpha \|\bs^{+}\|_2$. We now choose the value of $\alpha$ for which the lower bounds~\eqref{eq:L4.5_ARC_1} and \eqref{eq:L4.5_ARC_2} are equal; i.e. $\alpha^2 = \dfrac{1}{2}\left(1 - \alpha^2\right)$, equivalently $\alpha = \sqrt{\dfrac{1}{3}}$.
	\end{proof}

	We next show that sufficient model decrease is satisfied when either $\bg_k^T\bs_k \leq 0$ or $\bs_k^T\bH_k\bs_k \geq 0$.
	
	\begin{lemma}\label{L4.4_ACR}
		Suppose that $\bg_k^T\bs_k \leq 0$ or $\bs_k^T\bH_k\bs_k\geq 0$. Then,
		\begin{equation}
		f_k - q_k(\bs_k) = -\bg_k^T\bs_k - \dfrac{1}{2}\bs_k^T\bH_k\bs_k \geq \dfrac{1}{2}\lambda_k\|\bs_k\|_2^2.
		\end{equation}
	\end{lemma}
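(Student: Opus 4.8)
The key ingredient is already available from Lemma~\ref{L3.3}. Its first bound, rewritten by expanding $\tfrac12\bs_k^T(\bH_k+\lambda_k\bI)\bs_k = \tfrac12\bs_k^T\bH_k\bs_k + \tfrac12\lambda_k\|\bs_k\|_2^2$, reads
\[
f_k - q_k(\bs_k) \;\geq\; \tfrac12\,\bs_k^T\bH_k\bs_k \;+\; \lambda_k\|\bs_k\|_2^2 .
\tag{$\star$}
\]
(Equivalently, one can re-derive $(\star)$ directly: substitute $\bg_k = -(\bH_k+\lambda_k\bI)\bs_k - \bA^T\blambda_k^C$ from the KKT condition~\eqref{CC1} into $f_k - q_k(\bs_k) = -\bg_k^T\bs_k - \tfrac12\bs_k^T\bH_k\bs_k$, obtaining $\tfrac12\bs_k^T\bH_k\bs_k + \lambda_k\|\bs_k\|_2^2 + \bs_k^T\bA^T\blambda_k^C$, and then drop the last term, which is nonnegative by feasibility of $\bx_k$ and complementary slackness~\eqref{CC2}, exactly as in the proof of Lemma~\ref{L3.3}.) Recall also that $\lambda_k \geq 0$ by~\eqref{CC3}.

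\emph{Case 1: $\bs_k^T\bH_k\bs_k \geq 0$.} Then $(\star)$ immediately gives $f_k - q_k(\bs_k) \geq \lambda_k\|\bs_k\|_2^2 \geq \tfrac12\lambda_k\|\bs_k\|_2^2$, and we are done.

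\emph{Case 2: $\bg_k^T\bs_k \leq 0$.} From the identity $f_k - q_k(\bs_k) = -\bg_k^T\bs_k - \tfrac12\bs_k^T\bH_k\bs_k$ and $\bg_k^T\bs_k \leq 0$ we get the second bound $f_k - q_k(\bs_k) \geq -\tfrac12\bs_k^T\bH_k\bs_k$. Adding this to $(\star)$ cancels the $\pm\tfrac12\bs_k^T\bH_k\bs_k$ terms and yields $2\,(f_k - q_k(\bs_k)) \geq \lambda_k\|\bs_k\|_2^2$, i.e. $f_k - q_k(\bs_k) \geq \tfrac12\lambda_k\|\bs_k\|_2^2$.

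Since one of the two hypotheses always holds, the claimed inequality follows in all cases. There is no real obstacle here: the only points requiring care are the nonnegativity of the cross term $\bs_k^T\bA^T\blambda_k^C$ (inherited from Lemma~\ref{L3.3}) and of $\lambda_k$, both of which are standard consequences of the KKT system~\eqref{CC1}--\eqref{CC3}. The slightly non-obvious move is combining the ``trivial'' lower bound $-\tfrac12\bs_k^T\bH_k\bs_k$ in Case~2 with $(\star)$ rather than trying to bound each term in isolation.
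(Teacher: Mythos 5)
Your proof is correct and rests on the same ingredients as the paper's: the KKT identity \eqref{CC1} together with dropping the nonnegative cross term $\bs_k^T\bA^T\blambda_k^C$ (via \eqref{CC2} and feasibility of $\bx_k$), which is exactly the content of Lemma~\ref{L3.3} that you invoke as $(\star)$. The only organizational difference is that the paper first notes that feasibility of the origin gives $q_k(\bs_k)\le f_k$, hence $\bs_k^T\bH_k\bs_k\ge 0$ forces $\bg_k^T\bs_k\le 0$, collapsing everything to a single case, whereas you treat the case $\bs_k^T\bH_k\bs_k\ge 0$ directly from $(\star)$ and $\lambda_k\ge 0$; both routes are equally short and valid.
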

	
	\begin{proof}
		First notice that since the origin is feasible in $Q_k$, 
		\[\bg_k^T\bs_k + \dfrac{1}{2}\bs_k^T\bH_k\bs_k \leq 0.\]
		Hence,
		\[\bs_k^T\bH_k\bs_k\geq 0 \Rightarrow \bg_k^T\bs_k \leq 0.\]
		
		On the other hand, if $\bg_k^T\bs_k \leq 0$, by \eqref{CC1},
		\begin{equation}\label{eq:lemma4.4-1}
		2\big(\bg_k^T \bs_k + \dfrac{1}{2}\bs_k^T\bH_k\bs_k + \dfrac{1}{2}\lambda_k\|\bs_k\|_2^2\big) = \bg_k^T\bs_k - \bs_k^T \bA^T \blambda_k^C \leq \bg_k^T\bs_k \leq 0,
		\end{equation}
		where the first inequality holds due to the complementary slackness condition~\eqref{CC2}.
	\end{proof}

	\begin{lemma}\label{L4.6_ACR}
		Suppose Assumption \ref{Assumption 3} holds at iteration $k$. Then there exist a constant $\kappa>0$ independent of $k$ such that 
		\[f_k - q_k(\bs_k) \geq \kappa \lambda_k \|\bs_k\|_2^2.\]
	\end{lemma}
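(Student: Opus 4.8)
The plan is to split into two cases according to whether Assumption~\ref{Assumption 3} is actually invoked. If $\bg_k^T\bs_k \leq 0$ or $\bs_k^T\bH_k\bs_k \geq 0$, then Lemma~\ref{L4.4_ACR} immediately gives $f_k - q_k(\bs_k) \geq \tfrac12\lambda_k\|\bs_k\|_2^2$, so the claim holds with any $\kappa \leq \tfrac12$. The remaining case is $\bg_k^T\bs_k \geq 0$ and $\bs_k^T\bH_k\bs_k \leq 0$, where Assumption~\ref{Assumption 3} supplies a feasible chain $\bx_k = \bx_{k,0}, \bx_{k,1},\dots,\bx_{k,l_k} = \bx_k + \bs_k$ with $l_k \leq \bar{l}$; here I will run an induction along the chain, writing $\bs_{k,i} = \bx_{k,i} - \bx_k$.

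For the base step ($i=1$), since $\bs_{k,0} = \bzero$, the second and third inequalities of Assumption~\ref{Assumption 3} at $i=1$ read $\bg_k^T\bs_{k,1} + \bs_{k,1}^T\bH_k\bs_{k,1} \leq -\lambda_k\|\bs_{k,1}\|_2^2$ and $\bg_k^T\bs_{k,1}\leq 0$. Substituting the first into $f_k - q_k(\bs_{k,1}) = -\bg_k^T\bs_{k,1} - \tfrac12\bs_{k,1}^T\bH_k\bs_{k,1}$ and then using $\bg_k^T\bs_{k,1}\leq 0$ yields $f_k - q_k(\bs_{k,1}) \geq \tfrac12\lambda_k\|\bs_{k,1}\|_2^2$; that is, the hypothesis of Lemma~\ref{L4.5_ACR} holds with $\bar\kappa_1 = \tfrac12 \in (0,1]$.

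For the inductive step, I apply Lemma~\ref{L4.5_ACR} with $\bs = \bs_{k,i-1}$ and $\bs^+ = \bs_{k,i}$: condition~\eqref{eq:l1} is the inductive hypothesis $f_k - q_k(\bs_{k,i-1}) \geq \bar\kappa_{i-1}\lambda_k\|\bs_{k,i-1}\|_2^2$, condition~\eqref{eq:l2} is the first inequality of Assumption~\ref{Assumption 3} at step $i$, and conditions~\eqref{eq:l3}--\eqref{eq:l4} are the second and third inequalities of Assumption~\ref{Assumption 3} at step $i$ (using $\bs_{k,i} - \bs_{k,i-1} = \bx_{k,i} - \bx_{k,i-1}$). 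Lemma~\ref{L4.5_ACR} then gives $f_k - q_k(\bs_{k,i}) \geq \tfrac13\bar\kappa_{i-1}\lambda_k\|\bs_{k,i}\|_2^2$, so $\bar\kappa_i = \tfrac13\bar\kappa_{i-1}$ and all $\bar\kappa_i$ remain in $(0,1]$. Iterating down to $i = l_k$ and recalling $\bx_{k,l_k} = \bx_k + \bs_k$ gives $f_k - q_k(\bs_k) \geq \tfrac12\, 3^{-(l_k-1)}\lambda_k\|\bs_k\|_2^2 \geq \tfrac12\, 3^{-(\bar{l}-1)}\lambda_k\|\bs_k\|_2^2$, the last step using $l_k \leq \bar{l}$. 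Setting $\kappa \triangleq \tfrac12\, 3^{-(\bar{l}-1)}$ (which is $\leq \tfrac12$, hence also covers the first case; the degenerate possibility $l_k = 0$ forces $\bs_k = \bzero$ and is trivial) completes the proof.

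The only mild subtlety — not a genuine obstacle — is matching the chain of hypotheses of Lemma~\ref{L4.5_ACR} to Assumption~\ref{Assumption 3} term by term after the substitution $\bs_{k,i} = \bx_{k,i} - \bx_k$, and checking that $\bar\kappa_i$ never leaves $(0,1]$ so that Lemma~\ref{L4.5_ACR} keeps applying; both are immediate since $\bar\kappa_1 = \tfrac12$ and the shrinkage factor is $\tfrac13$ at each step. The uniform bound $l_k \leq \bar{l}$ is precisely what makes $\kappa$ independent of $k$.
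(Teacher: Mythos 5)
Your proof is correct and follows essentially the same route as the paper: dispose of the case $\bg_k^T\bs_k\leq 0$ or $\bs_k^T\bH_k\bs_k\geq 0$ via Lemma~\ref{L4.4_ACR}, then induct along the chain from Assumption~\ref{Assumption 3} using Lemma~\ref{L4.5_ACR} at each link. The only difference is that you make explicit what the paper leaves implicit — the base case computed from $\bs_{k,0}=\bzero$ and the quantitative constant $\kappa=\tfrac12\,3^{-(\bar l-1)}$, whose independence of $k$ indeed rests on the uniform bound $l_k\leq\bar l$.
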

	\begin{proof}
		If $\bg_k^T \bs_k \leq 0$ or $\bs_k^T\bH_k\bs_k \geq 0$, the result follows by Lemma \ref{L4.4_ACR}. Thus, we may proceed under the assumption that $\bg_k^T\bs_k \geq 0$ and $\bs_k^T\bH_k\bs_k \leq 0$. Using Assumption~\ref{Assumption 3}, we proceed with a proof by induction on $l_k$. If $l_k = 1$, the last condition in Assumption~\ref{Assumption 3} implies that $\bg_k^T \bs_k \leq 0$, thus Lemma~\ref{L4.4_ACR} implies the desired result. Now assume the result holds for $l_k = i<\bar{l}$, we next show that the result holds for $l_k = i+1$.
		By the induction step and Assumption \ref{Assumption 3}, we have
		\[f_k - q_k(\bs_{k,i}) \geq \bar{\kappa} \lambda_k\|\bs_{k,i}\|_2^2,\]
		\[q_k(\bs_{k,i+1}) \leq q_k(\bs_{k,i}),\]
		\[\big\langle \bg_k + \bH_k\bs_{k,i+1} , \bx_{k,i+1} - \bx_{k,i} \big\rangle \leq  - \lambda_k\bs_{k,i+1}^T(\bx_{k,i+1} - \bx_{k,i+1}),\]
		\[ \big\langle \bg_k + \bH_k\bs_{k,i} , \bx_{k,i+1} - \bx_{k,i} \big\rangle \leq - \lambda_k\bs_{k,i}^T(\bx_{k,i+1} - \bx_{k,i+1}).\]
		Then using Lemma~\ref{L4.5_ACR}, with $\bx = \bx_{k,i}$ and $\bx^{+} = \bx_{k,i+1}$ we obtain
		\[f_k - q_k(\bs_{k,i+1}) \geq \bar{\kappa}^{+} \lambda_k \|\bs_{k,i+1}\|_2^2,\]
		for some $\bar{\kappa}^{+} \in (0,1)$ independent of $k$. 
	\end{proof}
	
	
	

	Our next result provides a bound on the ratio $\lambda_{k+1} / \|\bs_{k+1}\|_2$ when $k \in {\cal C}$. 
	
	\begin{lemma}\label{L3.17}
		Assume Assumption \ref{Assumption 3} holds at iteration $k \in {\cal C}$. Then,
		\begin{itemize}
			\item If Step~\ref{return-1}, \ref{return-2}, or \ref{return-3} of Algorithm~\ref{alg-contract} is reached, then
			\[ \underline{\sigma} \leq \dfrac{\lambda_{k+1}}{\|\bs_{k+1}\|_2}\leq \max \Big\{\overline{\sigma},\Big(\dfrac{\gamma_{\lambda}}{\gamma_{C}} \Big)\dfrac{H_{Lip} + 2 \rho}{2 \kappa}\Big\}.\]
			\item If Step~\ref{return-4} of Algorithm~\ref{alg-contract} is reached, then
			\[ \dfrac{\lambda_{k+1}}{\|\bs_{k+1}\|_2}\leq \max \Big\{\overline{\sigma},\Big(\dfrac{\gamma_{\lambda}}{\gamma_{C}} \Big)\dfrac{H_{Lip} + 2 \rho}{2 \kappa}\Big\}.\]
		\end{itemize}
	\end{lemma}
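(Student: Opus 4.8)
The plan is to argue by cases on which \textbf{return} statement of Algorithm~\ref{alg-contract} is executed, after a common reduction. Since $k\in{\cal C}$ we have $\bx_{k+1}=\bx_k$, so $Q_{k+1}$ is $Q_k$ with the contracted radius $\delta_{k+1}$ returned by CONTRACT. In Steps~\ref{return-1}, \ref{return-2} and~\ref{return-3} this radius is the norm of a solution $\bs$ of a regularized problem $Q_k(\lambda)$ with $\lambda\ge\bar{\lambda}=\lambda_k+\underline{\sigma}\Delta_k>\lambda_k$; such an $(\bs,\lambda)$ satisfies the optimality system \eqref{CC1}--\eqref{CC3} of $Q_{k+1}$ (regularized optimality yields \eqref{CC1}--\eqref{CC2}, while $\|\bs\|_2=\delta_{k+1}$ and $\lambda>0$ yield \eqref{CC3} together with complementarity), and, using that $\lambda$ was taken above $\lambda_k$, I would identify $(\bs_{k+1},\lambda_{k+1})$ with $(\bs,\lambda)$ -- respectively with $(\bar{\bs},\bar{\lambda})$ in Step~\ref{return-2} -- so that it suffices to bound $\lambda/\|\bs\|_2$ in these three cases; Step~\ref{return-4}, where $\delta_{k+1}=\gamma_C\|\bs_k\|_2$, is reduced to Step~\ref{return-3} at the end.

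For the lower bound, observe that in Steps~\ref{return-1}, \ref{return-2} and~\ref{return-3} one has $\lambda_{k+1}\ge\bar{\lambda}>\underline{\sigma}\Delta_k$ while $\|\bs_{k+1}\|_2=\delta_{k+1}<\delta_k\le\Delta_k$ by Lemmas~\ref{L3.4} and~\ref{L3.5-3.6}, hence the ratio exceeds $\underline{\sigma}$. For the upper bound: Step~\ref{return-1} passed the explicit test $\lambda/\|\bs\|_2\le\overline{\sigma}$ in Step~\ref{Cond-upper-bound-yes}; in Step~\ref{return-2}, $\lambda_{k+1}=\bar{\lambda}$ and Step~\ref{Cond-lower-bound-no} forced $\lambda_k<\underline{\sigma}\|\bs_k\|_2\le\underline{\sigma}\Delta$, so $\bar{\lambda}<2\underline{\sigma}\Delta$, and I would combine the standing assumption ${\cal X}_k\ge\epsilon$ with the argument of Lemma~\ref{LXksk} applied to $Q_k(\bar{\lambda})$ -- which gives $\|\bar{\bs}\|_2\ge{\cal X}_k/(C_{min}+\bar{\lambda})$ -- and the definitions of $\underline{\sigma},\overline{\sigma}$ in \eqref{sigma-min-max} to obtain $\bar{\lambda}/\|\bar{\bs}\|_2\le\overline{\sigma}$; this is precisely why \eqref{sigma-min-max} is calibrated the way it is.

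The substantive case is Step~\ref{return-3}. There $\lambda$ is the terminal value of the linear-increase loop, $\lambda=\gamma_\lambda\lambda^{-}$ for the preceding trial value $\lambda^{-}$, and from the branching in Steps~\ref{update-lambda-no-update-s}--\ref{while-loop} either the solution $\bs^{-}$ of $Q_k(\lambda^{-})$ satisfies $\|\bs^{-}\|_2=\|\bs_k\|_2$ (because we just exited the while loop, or because $\lambda^{-}=\bar{\lambda}$ was reached through Step~\ref{lambda-bar-update-linear}, where the test $\|\bar{\bs}\|_2=\|\bs_k\|_2$ held) or $\lambda^{-}$ is a starting value bounded directly via Lemma~\ref{L3.8}. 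In the first sub-case, $\bs^{-}$ and $\bs_k$ are both global minimizers of $q_k$ over the feasible set intersected with $\{\|\bs\|_2\le\|\bs_k\|_2\}$, so $q_k(\bs^{-})=q_k(\bs_k)$; then the model-decrease bound $f_k-q_k(\bs^{-})\ge\kappa\lambda^{-}\|\bs^{-}\|_2^2$ of Lemma~\ref{L4.6_ACR} (applied to the sub-problem with radius $\|\bs^{-}\|_2$), the Lipschitz-Hessian Taylor estimate $q_k(\bs_k)-f(\bx_k+\bs_k)\ge-\tfrac{H_{Lip}}{2}\|\bs_k\|_2^3$, and $\rho_k<\rho$ give $\kappa\lambda^{-}\|\bs_k\|_2^2\le f_k-q_k(\bs_k)<(\rho+\tfrac{H_{Lip}}{2})\|\bs_k\|_2^3$, i.e. $\lambda^{-}/\|\bs_k\|_2<(H_{Lip}+2\rho)/(2\kappa)$; dividing by the return test $\|\bs\|_2\ge\gamma_C\|\bs_k\|_2$ of Step~\ref{Cond-s-large} and multiplying by $\gamma_\lambda$ yields $\lambda/\|\bs\|_2<(\gamma_\lambda/\gamma_C)(H_{Lip}+2\rho)/(2\kappa)$. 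The remaining starting-value sub-case I would close using $\lambda_k<g_{Lip}+H_{max}+\rho\|\bs_k\|_2$ (Lemma~\ref{L3.8}), $\|\bs_k\|_2\ge\underline{\sigma}$ (from ${\cal X}_k\ge\epsilon$ and Lemma~\ref{LXksk}), and \eqref{sigma-min-max}, the outcome being absorbed into $\overline{\sigma}$. Finally, for Step~\ref{return-4}: if the trust-region constraint of $Q_{k+1}$ is inactive then $\lambda_{k+1}=0$; otherwise $\|\bs_{k+1}\|_2=\gamma_C\|\bs_k\|_2$, and since this radius is $\le\delta_k$ the inverse radius/multiplier relationship gives $\lambda_{k+1}\ge\lambda_k$, so $\lambda_{k+1}$ and the terminal $\lambda$ both lie on the branch $\{\mu\ge\lambda_k\}$ on which $\mu\mapsto\|\bs^{*}(\mu)\|_2$ is non-increasing; as $\|\bs_{k+1}\|_2=\gamma_C\|\bs_k\|_2>\|\bs\|_2$ (the terminal step failed Step~\ref{Cond-s-large}) one gets $\lambda_{k+1}\le\lambda$, and the bound just obtained on $\lambda/(\gamma_C\|\bs_k\|_2)$ passes to $\lambda_{k+1}/\|\bs_{k+1}\|_2$.

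The hard part will be the upper bound in Steps~\ref{return-3} and~\ref{return-4}: one has to pin down which update rule of Algorithm~\ref{alg-contract} produced the terminal $\lambda$ and tie it to a trial value whose step has norm comparable to $\|\bs_k\|_2$, and one has to cope with the fact -- precisely the pathology illustrated by~\eqref{example} -- that $\|\bs^{*}(\lambda)\|_2$ is neither continuous nor globally monotone in $\lambda$ under the linear constraints, so the monotonicity of the step-norm (and the semidefiniteness of $\bH_k+\lambda\bI$ used for the identification in the first paragraph) is available only on the branch $\lambda\ge\lambda_k$ carved out by CONTRACT. A secondary, routine obstacle is the constant bookkeeping around~\eqref{sigma-min-max}: the Step~\ref{return-2} estimate and the residual sub-case of Step~\ref{return-3} go through only because $\underline{\sigma}$ is tied to the termination threshold $\epsilon$ and ${\cal X}_k\ge\epsilon$ throughout, so \eqref{sigma-min-max} must be invoked exactly where a naive bound would otherwise blow up.
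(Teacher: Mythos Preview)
Your overall case split and the treatment of Steps~\ref{return-1}, \ref{return-2}, and \ref{return-4} match the paper's proof, and your direct derivation of the bound $\lambda^{-}/\|\bs_k\|_2<(H_{Lip}+2\rho)/(2\kappa)$ in the main sub-case of Step~\ref{return-3} is the unwound version of the paper's contradiction argument for~\eqref{3.17-2}. However, your handling of the ``remaining starting-value sub-case'' of Step~\ref{return-3} contains a genuine gap.

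In that sub-case you have the terminal $\lambda=\gamma_\lambda\lambda_k$ (Step~\ref{lambda-update-linear} followed by no while-loop iterations), and you propose to bound $\lambda_k$ via Lemma~\ref{L3.8}, which gives only $\lambda_k<g_{Lip}+H_{max}+\rho\|\bs_k\|_2$. Dividing by $\|\bs_k\|_2$ and using $\|\bs_k\|_2\ge\underline{\sigma}$ you obtain
\[
\frac{\lambda_k}{\|\bs_k\|_2}\;<\;\frac{g_{Lip}+H_{max}}{\underline{\sigma}}+\rho
\;=\;\frac{(g_{Lip}+H_{max})\bigl(C_{min}+\max\{\lambda_{max},\lambda_0\}\bigr)}{\epsilon}+\rho,
\]
which is of order $1/\epsilon$. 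Since $\overline{\sigma}=2\Delta$ is an $\epsilon$-independent constant, this bound \emph{cannot} be ``absorbed into $\overline{\sigma}$'' as you claim; the resulting upper bound on $\lambda_{k+1}/\|\bs_{k+1}\|_2$ would degrade with $\epsilon$ and wreck the later use of this lemma (e.g.\ in Lemma~\ref{L3.18}).

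The fix is to drop Lemma~\ref{L3.8} here and instead run the \emph{same} chain you used in the main sub-case, but on the pair $(\bs_k,\lambda_k)$ itself, for which Assumption~\ref{Assumption 3} and Lemma~\ref{L4.6_ACR} apply verbatim: from $f_k-q_k(\bs_k)\ge\kappa\lambda_k\|\bs_k\|_2^2$, the Lipschitz-Hessian estimate $q_k(\bs_k)-f(\bx_k+\bs_k)\ge-\tfrac{H_{Lip}}{2}\|\bs_k\|_2^3$, and $\rho_k<\rho$, one gets directly $\lambda_k/\|\bs_k\|_2<(H_{Lip}+2\rho)/(2\kappa)$, and then $\lambda/\|\bs\|_2\le(\gamma_\lambda/\gamma_C)(H_{Lip}+2\rho)/(2\kappa)$ exactly as in the main sub-case. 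This is in fact what the paper's contradiction argument for~\eqref{3.17-2} is implicitly covering, since $(\bs_k,\lambda_k)$ plays the role of the trial pair $(\hat{\bs},\hat{\lambda})$ with $\|\hat{\bs}\|_2=\|\bs_k\|_2$.
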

	
	\begin{proof}
		Let $k \in {\cal C}$ and consider the three possible cases. The
		first two correspond to situations in which the conditions in Step~\ref{Cond-lower-bound-no} in the CONTRACT subroutine tests true.
		\begin{itemize}
			\item Suppose that Step~\ref{return-1} is reached. Then, $\delta_{k+1}= \|\bs\|_2$ where $(\lambda, \bs)$ is computed in Step~\ref{lambda-update-H}. It follows that Step~\ref{Sigma-rho_k} in Algorithm~\ref{alg-LC} will then produce the primal-dual pair $(\bs_{k+1}, \lambda_{k+1})=(\bs, \lambda)$ with $\lambda >0$. Since the condition in Step~\ref{Cond-upper-bound-yes} tested true, we have
			\begin{equation}\label{L3.17-1}
			\underline{\sigma} \leq \dfrac{\lambda_k + \underline{\sigma}{\Delta_k}}{\Delta_k} \leq \dfrac{\lambda_{k+1}}{\|\bs_{k+1}\|_2} = \dfrac{\lambda}{\|\bs\|_2} \leq \overline{\sigma},
			\end{equation}
			where the second inequality holds since $\|\bs_{k+1}\|_2 =\delta_{k+1} \leq \|\bs_k\|_2 \leq \Delta_k$.
			
			\item Suppose that Step~\ref{return-2} is reached. Then, $\delta_{k+1} = \|\bar\bs\|_2 $ where $(\bar\lambda, \bar\bs)$ is computed in Step~\ref{lambda-update-sigma}. Similar to the previous case, it follows that Step~\ref{Sigma-rho_k} in Algorithm~\ref{alg-LC} will produce the primal-dual pair $(\bs_{k+1}, \lambda_{k+1})= (\bar\bs, \bar\lambda)$ with $\bar\lambda = \lambda_k + \underline{\sigma}\Delta_k$. We first show that $\|\bar\bs\|_2 \geq \underline{\sigma}$. Assume the contrary, then by Lemma \ref{LXksk} and the fact that ${\cal X}_{k+1} ={\cal X}_k$ for all $k \in {\cal C}$,
			\[{\cal X}_k \leq (C_{min} + max\{\lambda_{max} , \lambda_0\})\|\bar\bs\|_2 \leq (C_{min} + max\{\lambda_{max} , \lambda_0\})\underline{\sigma} = \epsilon,\]
			which contradicts our assumption on ${\cal X}_k$. Here the last inequality uses the definition \eqref{sigma-min-max} of $\underline{\sigma}$. Combined with Lemma \ref{L3.5-3.6} we obtain
			\[\underline{\sigma} \leq \|\bar\bs\|_2 \leq \|\bs_k\|_2 \leq \Delta_k.\]
			Therefore,
			\begin{equation}\label{L3.17-2}
			\underline{\sigma} \leq \dfrac{ \lambda_k + \underline{\sigma}\Delta_k}{\Delta_k} \leq \dfrac{\bar\lambda}{\|\bar\bs\|_2} \leq \dfrac{\lambda_k + \underline{\sigma}\Delta_k}{\underline{\sigma}} \leq \dfrac{\underline{\sigma}(\|\bs_k\|_2 + \Delta_k)}{\underline{\sigma}} \leq 2\Delta_k \leq \bar{\sigma},
			\end{equation}
			where the fourth inequality holds by the condition of Step~\ref{Cond-lower-bound-no} and the last inequality holds by Lemmas \ref{L3.5-3.6}, \ref{L3.11} and the definition of $\bar{\sigma}$ in \ref{sigma-min-max}.\\
			
			The other case correspond to situations in
			which the condition in Step~\ref{Cond-lower-bound-no} tests false. It follows by Steps~\ref{lambda-update-sigma} and \ref{Cond-lower-bound-no} that
			\begin{equation}\label{3.17}
			\underline{\sigma} \leq  \dfrac{\lambda}{\|\bs\|_2},
			\end{equation}
			Finally, using the argument of Lemma \ref{LBG}, we claim that 
			\begin{equation}\label{3.17-2}
			\lambda_{k+1} \leq \gamma_{\lambda}\Big(\dfrac{H_{Lip} + 2\rho}{2 \kappa}\Big)\|\bs_k\|_2.
			\end{equation}

			To show our claim, we assume the contrary, i.e. $\lambda_{k+1} > \gamma_{\lambda}\Big(\dfrac{H_{Lip} + 2\rho}{2 \kappa}\Big)\|\bs_k\|_2.$ Then the condition of the while loop in Step~\ref{while-loop} tested true for some $\hat{\bs}$ computed by solving $Q_k(\hat{\lambda})$ for $\hat{\lambda} \geq (\dfrac{H_{Lip} + 2\rho}{2 \kappa})\|\hat{\bs}\|_2$. 
			
			There exists $\hat{\bx}$ on the line segment $[\bx_{k} , \bx_{k} + \hat{\bs}]$ such that
			\begin{equation}\label{eq:l-24-1}
			q_k(\hat{\bs}) - f(\hat{\bx} + \hat{\bs}) = \dfrac{1}{2}\hat{\bs}^T\big(\bH_k - \bH(\bx_k)\big)\hat{\bs} \geq -\dfrac{1}{2}H_{Lip}\|\hat{\bs}\|_2^3.  
			\end{equation}
			
			Therefore,
			\begin{flalign*}
			\dfrac{\hat{f} - f(\hat{\bx} + \hat{\bs})}{\|\hat{\bs}\|_2^3} &= \dfrac{\hat{f} - q_k(\hat{\bs}) + q_k(\hat{\bs}) - f(\hat{\bx} + \hat{\bs})}{\|\hat{\bs}\|_2^3}\\
			&\geq \dfrac{\kappa \hat{\lambda}\|\hat{\bs}\|_2^2 - 0.5H_{Lip}\|\hat{\bs}\|_2^3}{\|\hat{\bs}\|_2^3}\\
			&\geq \dfrac{\|\bs_k\|_2\rho }{\|\hat{\bs}\|_2}\\
			& \geq \rho,
			\end{flalign*}
			where the first inequality holds by  Lemma \ref{L4.6_ACR} and \eqref{eq:l-24-1}, and the last inequality holds since $\|\bs_k\|_2 \geq \|\hat{\bs}\|_2$. However $\rho_{k} = \dfrac{f_k - f(\bx_k + \bs_k)}{\|\bs_k\|_2^3} < \rho$. It follows that $\|\hat{\bs}\|_2 \neq \|\bs_k\|_2$ which contradicts the condition of the while loop in Step~\ref{while-loop} for $\hat\bs$ computed by solving $Q_k(\hat\lambda)$.

			\item Suppose that Step~\ref{return-3} is reached. Then, $\delta_{k+1} = \|\bs\|_2$. It follows that Step~\ref{Sigma-rho_k} in Algorithm~\ref{alg-LC} will produce the primal-dual pair $(\bs_{k+1}, \lambda_{k+1})$ solving $Q_{k+1}$ such that $\bs_{k+1} = \bs$ and
			$\lambda_{k+1} = \lambda$. In conjunction with \eqref{3.17}, \eqref{3.17-2}, and the condition in Step~\ref{Cond-s-large} of the CONTRACT sub-routine, we observe that
			\begin{equation}\label{L3.17-3}
			\underline{\sigma} \leq \dfrac{\lambda_{k+1}}{\|\bs_{k+1}\|_2}= \dfrac{\lambda}{\|\bs\|_2}  \leq \Big(\dfrac{\gamma_{\lambda}}{\gamma_{C}} \Big)\dfrac{H_{Lip} + 2 \rho}{2 \kappa}
			\end{equation}
			
			\item Suppose that Step~\ref{return-4} is reached. Then, $\delta_{k+1}= \gamma_C\|\bs_k\|_2$. It follows that Step~\ref{Sigma-rho_k} in Algorithm~\ref{alg-LC} will produce the primal-dual pair $(\bs_{k+1}, \lambda_{k+1}) = (\bs, \lambda)$. If  $\|\bs\|_2 < \delta_{k+1} = \gamma_C\|\bs_k\|_2$, then $\lambda_{k+1} = 0$. Otherwise, $\|\bs\|_2 = \delta_{k+1} = \gamma_C\|\bs_k\|_2$ and $\lambda_{k+1} > 0$. Combined with \eqref{3.17} and \eqref{3.17-2},  we obtain
			
			\[ \dfrac{\lambda_{k+1}}{\|\bs_{k+1}\|_2}  = \dfrac{\lambda}{\|\bs\|_2} \leq \Big(\dfrac{\gamma_{\lambda}}{\gamma_C} \Big)\Big(\dfrac{H_{Lip} + 2\rho}{2 \kappa}\Big).\]
			
		\end{itemize}
		The result follows since we have obtained the desired inequalities in all cases.
	\end{proof}
	
	We now provide an upper bound for the sequence $\{\sigma_{max}\}$.
	\begin{lemma}\label{L3.18}
		Assume Assumption \ref{Assumption 3} holds. There exists a scalar constant $\sigma_{max} > 0$ such that
		\[\sigma_{k} \leq \sigma_{max} \quad \forall \, \,k \in \mathbb{N}.\]
	\end{lemma}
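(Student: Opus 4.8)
The plan is to prove the bound by a short induction on $k$, after first isolating the only two ways $\sigma_k$ can grow. Reading off Algorithm~\ref{alg-LC}, the sequence $\{\sigma_k\}$ is non-decreasing and is altered only in Step~\ref{sigma-acc} (when $k\in{\cal A}$, setting $\sigma_{k+1}=\max\{\sigma_k,\lambda_k/\|\bs_k\|_2\}$) and in Step~\ref{Sigma-rho_k} (which fires precisely when $\rho_k<\rho$, i.e. $k\in{\cal C}$, setting $\sigma_{k+1}=\max\{\sigma_k,\lambda_{k+1}/\|\bs_{k+1}\|_2\}$); on expansion steps Step~\ref{sigma-exp} leaves it unchanged. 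I would also record that, under the standing normalization ${\cal X}_k\geq\epsilon>0$, the origin strictly decreases $q_k$, so $\bs_k\neq\mathbf{0}$ for every $k$ and all the ratios written into these updates are well defined.

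Next I would set
\[
\sigma_{max}\triangleq\max\left\{\sigma_0,\ \frac{\max\{\lambda_0,\lambda_{max}\}}{\Delta_0},\ \overline{\sigma},\ \left(\frac{\gamma_\lambda}{\gamma_C}\right)\frac{H_{Lip}+2\rho}{2\kappa}\right\},
\]
and prove $\sigma_k\leq\sigma_{max}$ for all $k$ by induction, the base case $\sigma_0\leq\sigma_{max}$ being immediate. For the inductive step, assuming $\sigma_k\leq\sigma_{max}$, I would split on the type of iteration. If $k\in{\cal E}$ there is nothing to do. If $k\in{\cal A}$: for $k\in{\cal A}_\sigma$ the acceptance test forces $\lambda_k/\|\bs_k\|_2\leq\sigma_k$, so $\sigma_{k+1}=\sigma_k\leq\sigma_{max}$; for $k\in{\cal A}_\Delta$ we have $\|\bs_k\|_2=\Delta_k$, and combining the uniform bound $\lambda_k\leq\max\{\lambda_0,\lambda_{max}\}$ from Lemma~\ref{LBG} with $\Delta_k\geq\Delta_0$ from Lemma~\ref{L3.5-3.6} gives $\lambda_k/\|\bs_k\|_2=\lambda_k/\Delta_k\leq\max\{\lambda_0,\lambda_{max}\}/\Delta_0\leq\sigma_{max}$, hence $\sigma_{k+1}\leq\sigma_{max}$. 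If $k\in{\cal C}$: since Assumption~\ref{Assumption 3} is assumed throughout, Lemma~\ref{L3.17} applies and shows that whichever of Steps~\ref{return-1}--\ref{return-4} of CONTRACT is taken, $\lambda_{k+1}/\|\bs_{k+1}\|_2\leq\max\{\overline{\sigma},(\gamma_\lambda/\gamma_C)(H_{Lip}+2\rho)/(2\kappa)\}\leq\sigma_{max}$, so again $\sigma_{k+1}=\max\{\sigma_k,\lambda_{k+1}/\|\bs_{k+1}\|_2\}\leq\sigma_{max}$. This closes the induction and gives the claim.

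There is no real obstacle here: the heavy lifting is already done in Lemmas~\ref{LBG} and~\ref{L3.17}, and what remains is careful bookkeeping over the algorithm's branches. The two points that need a bit of attention are (i) making sure every write to $\sigma_{k+1}$ is accounted for — in particular the interplay between the main acceptance branch and the post-processing Steps~\ref{Cond-rho_k}--\ref{Sigma-rho_k}, which trigger exactly on contraction iterations — and (ii) ensuring $\|\bs_k\|_2$ stays bounded away from $0$ so the ratios cannot blow up; both are dispatched by the observations in the first paragraph.
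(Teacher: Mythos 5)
Your proof is correct, and its skeleton matches the paper's: split on the iteration type, observe that expansion and $\mathcal{A}_\sigma$ steps leave $\sigma_k$ unchanged, and invoke Lemma~\ref{L3.17} to control the ratio written in Step~\ref{Sigma-rho_k} on contraction steps. The one place you genuinely diverge is the treatment of $\mathcal{A}_\Delta$ iterations. The paper disposes of them by citing Lemma~\ref{L3.11} (the set $\mathcal{A}_\Delta$ is finite, so one only needs to bound $\sigma_k$ for $k\geq k_{\mathcal{A}}$, at the cost of a constant that depends on $\sigma_{k_{\mathcal{A}}}$ and hence on the particular run). You instead bound the write $\lambda_k/\|\bs_k\|_2=\lambda_k/\Delta_k$ directly, using $\lambda_k\leq\max\{\lambda_0,\lambda_{max}\}$ from Lemma~\ref{LBG} and the monotonicity $\Delta_k\geq\Delta_0>0$ from Lemma~\ref{L3.5-3.6}. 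This is a clean alternative: it does not need Lemma~\ref{L3.11} at all and yields a fully explicit $\sigma_{max}$ in terms of the algorithm's inputs and the constants of Lemmas~\ref{LBG} and~\ref{L3.17}, whereas the paper's constant is only shown to exist. Your preliminary observations (that $\sigma_k$ is modified only in Steps~\ref{sigma-acc} and~\ref{Sigma-rho_k}, and that $\bs_k\neq\mathbf{0}$ under the standing normalization $\mathcal{X}_k\geq\epsilon$ so the ratios are well defined) are accurate and worth stating; the paper leaves them implicit.
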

	
	\begin{proof}
		First note that by Lemma~\ref{L3.11}, the cardinality of the set ${\cal A}_{\Delta}$ is finite. Hence, there exist $k_{{\cal A}} \in \mathbb{N}$ such that $k \notin {\cal A}_{\Delta}$ for all $k \geq k_{{\cal A}}$. We continue by showing that $\sigma_k$ is upper bounded for all $k \geq k_{{\cal A}}$. Consider the following three cases:
		
		
		\begin{itemize}
			\item If $k \in {\cal A}_{\sigma}$, then by definition $\lambda_k \leq \sigma_k \|\bs_k\|_2$, which implies by Step~\ref{sigma-acc} of Algorithm~\ref{alg-LC} that 
			\[ \sigma_{k+1}= \max\{\sigma_k , \lambda_k /\|\bs_k\|_2 \} = \sigma_k.\]
			
			\item If $k \in {\cal C}$, by Step~\ref{Sigma-rho_k} of Algorithm~\ref{alg-LC} and Lemma \ref{L3.17}, it follows that
			
			\begin{equation}\label{expL3.18}
			\sigma_{k+1} =  \max \Big\{ \sigma_k , \dfrac{\lambda_{k+1}}{\|\bs_{k+1}\|_2}\Big\} \leq \max \Big\{\sigma_k, \overline{\sigma},\Big(\dfrac{\gamma_{\lambda}}{\gamma_{C}} \Big)\dfrac{H_{Lip} + 2 \rho}{2 \kappa} \Big\}.
			\end{equation}
			
			\item If $k \in {\cal E}$, then Step~\ref{sigma-exp} of Algorithm~\ref{alg-LC} implies that $\sigma_{k+1} = \sigma_k$.
		\end{itemize}
		Combining the results of these three cases, the desired result follows.
	\end{proof}
	
	We now establish an upper bound on the norm trial steps $\bs_k$ when $k \in {\cal A}_{\sigma}$.
	\begin{lemma}\label{L3.19}
		For all $k \in {\cal A}_{\sigma}$, the accepted step $\bs_k$ satisfies
		\[\|\bs_k\|_2 \geq (H_{Lip} + \sigma_{max})^{-1/2}{\cal X}_{k+1}^{1/2}.\]
	\end{lemma}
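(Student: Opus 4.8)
The plan is to establish the equivalent upper bound ${\cal X}_{k+1}\le(H_{Lip}+\sigma_{max})\|\bs_k\|_2^2$; taking square roots then gives the claimed inequality. First I would extract what membership in ${\cal A}_\sigma$ provides: since $k\notin{\cal A}_\Delta$ we have $\|\bs_k\|_2\neq\Delta_k$, so the acceptance test in Step~\ref{Cond-acc} can only have passed because $\lambda_k\le\sigma_k\|\bs_k\|_2$, and Lemma~\ref{L3.18} then yields $\lambda_k\le\sigma_{max}\|\bs_k\|_2$.

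Next, let $\bs^\ast$ be an optimal solution of the program defining ${\cal X}_{k+1}$ in \eqref{eq:X_k} at the point $\bx_{k+1}=\bx_k+\bs_k$, so that ${\cal X}_{k+1}=-\langle\bg_{k+1},\bs^\ast\rangle$ with $\|\bs^\ast\|_2\le 1$ and $\bA\bs^\ast\le\bb-\bA\bx_k-\bA\bs_k$ (feasibility of $\bx_{k+1}+\bs^\ast$). I would split
\[
{\cal X}_{k+1}=-\langle\bg_{k+1}-\bg_k-\bH_k\bs_k,\ \bs^\ast\rangle-\langle\bg_k+\bH_k\bs_k,\ \bs^\ast\rangle .
\]
For the first term, the standard integral estimate coming from the Lipschitz continuity of $\bH$ along the iterate path (Assumption~\ref{Assumption1-LC}) gives $\|\bg_{k+1}-\bg_k-\bH_k\bs_k\|_2\le\tfrac12 H_{Lip}\|\bs_k\|_2^2$, hence by Cauchy--Schwarz this term is at most $\tfrac12 H_{Lip}\|\bs_k\|_2^2$. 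For the second term, I would substitute the stationarity condition \eqref{CC1}, namely $\bg_k+\bH_k\bs_k=-\lambda_k\bs_k-\bA^T\blambda_k^C$, to obtain
\[
-\langle\bg_k+\bH_k\bs_k,\ \bs^\ast\rangle=\lambda_k\langle\bs_k,\bs^\ast\rangle+(\blambda_k^C)^T\bA\bs^\ast .
\]

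The curvature piece is controlled by the first step: $\lambda_k\langle\bs_k,\bs^\ast\rangle\le\lambda_k\|\bs_k\|_2\le\sigma_{max}\|\bs_k\|_2^2$. The remaining piece $(\blambda_k^C)^T\bA\bs^\ast$ is the one genuinely new feature of the constrained setting, and I expect it to be the only delicate point: since $\blambda_k^C\ge\bzero$ and $\bA\bs^\ast\le\bb-\bA\bx_k-\bA\bs_k$, we get $(\blambda_k^C)^T\bA\bs^\ast\le(\blambda_k^C)^T(\bb-\bA\bx_k-\bA\bs_k)$, and the right-hand side vanishes by complementary slackness \eqref{CC2} at $\bx_k$. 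Combining the three bounds yields ${\cal X}_{k+1}\le(\tfrac12 H_{Lip}+\sigma_{max})\|\bs_k\|_2^2\le(H_{Lip}+\sigma_{max})\|\bs_k\|_2^2$, and rearranging gives $\|\bs_k\|_2\ge(H_{Lip}+\sigma_{max})^{-1/2}{\cal X}_{k+1}^{1/2}$. Everything apart from the sign argument for $(\blambda_k^C)^T\bA\bs^\ast$ is a routine Taylor estimate and bookkeeping, so the constrained case costs nothing here beyond the unconstrained TRACE bound.
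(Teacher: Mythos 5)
Your proposal is correct and follows essentially the same route as the paper's proof: both substitute the KKT condition \eqref{CC1} for $\bg_k+\bH_k\bs_k$, bound the Taylor remainder $\bg_{k+1}-\bg_k-\bH_k\bs_k$ via the Lipschitz Hessian, use $\lambda_k\le\sigma_k\|\bs_k\|_2\le\sigma_{max}\|\bs_k\|_2$ from membership in ${\cal A}_\sigma$ together with Lemma~\ref{L3.18}, and kill the multiplier term $(\blambda_k^C)^T\bA\bd$ by combining feasibility at $\bx_{k+1}$ with complementary slackness \eqref{CC2}. The only differences are cosmetic (you work with the optimizer $\bs^\ast$ rather than an arbitrary feasible $\bd$, use the integral remainder with its factor $\tfrac12$ instead of the mean-value form, and apply complementary slackness over all constraints rather than restricting to the active set), none of which changes the argument.
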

	
	\begin{proof}
		For all $k \in {\cal A}_{\sigma}$ , there exists $\bar{\bx}_k$ on the line segment $[\bx_k , \bx_k + \bs_k]$ such that
		\begin{flalign*}
		\bg_{k+1}^T\bd & = \bg_{k+1}^T\bd - \bg_k^T\bd - \bd^T(\bH_k + \lambda_k\bI)\bs_k - \bd^T \bA^T \blambda_k^C\\
		& = \big(\bg(\bx_k + \bs_k) - \bg_k \big)^T\bd - \bd^T(\bH_k + \lambda_k\bI)\bs_k - \bd^T\bA^T\blambda_k^C\\
		& = \bd^T(\bH(\bar{\bx}_k) - \bH_k)\bs_k - \lambda_k\bd^T\bs_k - \bd^T\bA^T\blambda_k^C\\
		& \geq -H_{Lip}\|\bs_k\|_2^2\|\bd\|_2 - \lambda_k\|\bs_k\|_2\|\bd\|_2 - \bd^T\bA^T\blambda_k^C\\
		& \geq -H_{Lip}\|\bs_k\|_2^2 - \sigma_k\|\bs_k\|_2^2 - \bd^T\bA^T\blambda_k^C \quad \mbox{ for all } \bd \mbox{ with } \|\bd\|_2\leq 1,
		\end{flalign*}
		where the first equation follows from \eqref{CC1} and the last inequality follows since $\lambda_k \leq \sigma_k \|\bs_k\|_2$ for all $k \in {\cal A}_{\sigma}$. Thus 
		\begin{equation}\label{eq:proofL17}
		\displaystyle{ \operatornamewithlimits{\mbox{min}}_{\mbox{s.t. } d \, \in \,  {\cal D}_{k+1}}} \, \bg_{k+1}^T\bd    \geq -H_{Lip}\|\bs_k\|_2^2 - \sigma_k\|\bs_k\|_2^2 - \displaystyle{ \operatornamewithlimits{\mbox{max}}_{\mbox{s.t. } d \, \in \,  {\cal D}_{k+1}}} \, \bd^T\bA^T\blambda_k^C, 
		\end{equation}
		where ${\cal D}_{k+1} \triangleq \{ \bd \in \mathbb{R}^n \, | \, \|\bd\|_2 \leq 1; \, \, \, \bA\bd \leq \bb - \bA\bx_{k+1}\}$. Note that since $k \in {\cal A}_{\sigma}$ the updated step will be $\bx_{k+1}= \bx_k + \bs_k$. Now, let ${\cal I}_k \triangleq \{ i \, | \, \ba_i^T\bs_k = \bb_i - \ba_i^T\bx_k \}$, then 
		\[\bA_{{\cal I}_k} \bd \leq \bb_{{\cal I}_k} - \bA_{{\cal I}_k}\bx_{k+1} = \bb_{{\cal I}_k} - \bA_{{\cal I}_k}\bx_{k} - \bA_{{\cal I}_k}\bs_{k} =\bzero \Rightarrow (\blambda_k^C)^T\bA\bd = (\blambda_k^C)_{{\cal I}_k}^T\bA_{{\cal I}_k}\bd \leq 0,\] 
		for all $\bd \in {\cal D}_{k+1}$.
		Substituting in \eqref{eq:proofL17}, we obtain
		\[{\cal X}_{k+1} \leq H_{Lip}\|\bs_k\|_2^2 + \sigma_k\|\bs_k\|_2^2,\]
		which along with Lemma \ref{L3.18}, implies the result.
	\end{proof}
	
	We are now ready to compute a worst-case upper bound on the number of steps in ${\cal A}_{\sigma}$ for which the first-order criticality measure ${\cal X}_k$ is larger than a prescribed $\epsilon >0$.
	
	\begin{lemma}\label{L3.20}
		Assume Assumption \ref{Assumption 3} holds. For a scalar $\epsilon \in (0,\infty)$, the total number of elements in the index set
		\[ {\cal K}_{\epsilon}\triangleq \{k \in \mathbb{N}_{+} : k\geq 1; \, (k-1) \in {\cal A}_{\sigma}; \, {\cal X}_k >\epsilon\}\]
		is at most
		\begin{equation}\label{3.18}
		\Bigg\lceil \Bigg( \dfrac{f_0 - f_{min}}{\rho(H_{Lip}  + \sigma_{max})^{-3/2}} \Bigg)\epsilon^{-3/2}\Bigg\rceil \triangleq K_{\sigma}(\epsilon) \geq 0. 
		\end{equation}
	\end{lemma}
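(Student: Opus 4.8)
The plan is to combine the sufficient-decrease property of accepted steps with the lower bound on $\|\bs_k\|_2$ from Lemma~\ref{L3.19} and then telescope the objective values. First I would fix $k \in {\cal K}_\epsilon$. Since $(k-1) \in {\cal A}_\sigma \subseteq {\cal A}$, the step at iteration $k-1$ was accepted, so $\rho_{k-1} \geq \rho$, and by the definition~\eqref{eq:rho-k} of $\rho_{k-1}$ together with Step~\ref{x-acc} of Algorithm~\ref{alg-LC} this gives
\[
f_{k-1} - f_k \;\geq\; \rho\,\|\bs_{k-1}\|_2^3 .
\]
Next, because $(k-1) \in {\cal A}_\sigma$, Lemma~\ref{L3.19} applies at iteration $k-1$ and yields $\|\bs_{k-1}\|_2 \geq (H_{Lip} + \sigma_{max})^{-1/2}\,{\cal X}_k^{1/2}$. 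Using the defining property ${\cal X}_k > \epsilon$ of the set ${\cal K}_\epsilon$, we obtain $\|\bs_{k-1}\|_2 \geq (H_{Lip}+\sigma_{max})^{-1/2}\,\epsilon^{1/2}$, and therefore
\[
f_{k-1} - f_k \;\geq\; \rho\,(H_{Lip} + \sigma_{max})^{-3/2}\,\epsilon^{3/2}
\qquad \text{for every } k \in {\cal K}_\epsilon .
\]

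The second ingredient is that the objective sequence $\{f_k\}$ is monotonically non-increasing over all iterations: at an accepted step $f$ strictly decreases by the inequality above, and at a contraction or expansion step $\bx_{k+1} = \bx_k$, so $f_{k+1} = f_k$. Consequently, for any finite collection of distinct indices $k$ with $k \in {\cal K}_\epsilon$, the corresponding ``drop'' intervals $(f_{k-1} - f_k)$ are over disjoint index pairs and each is non-negative, so their sum is at most the total decrease $f_0 - f_{min}$ (recall $f$ is bounded below by $f_{min}$ on ${\cal P}$ by Assumption~\ref{Assumption1-LC}). Hence, if $N \triangleq |{\cal K}_\epsilon|$,
\[
N\,\rho\,(H_{Lip}+\sigma_{max})^{-3/2}\,\epsilon^{3/2} \;\leq\; \sum_{k \in {\cal K}_\epsilon}(f_{k-1} - f_k) \;\leq\; f_0 - f_{min}.
\]
Rearranging gives $N \leq \big((f_0 - f_{min})/(\rho (H_{Lip}+\sigma_{max})^{-3/2})\big)\epsilon^{-3/2}$, and since $N$ is a non-negative integer we may take the ceiling, which is exactly $K_\sigma(\epsilon)$.

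The only point that needs a little care — and the closest thing to an obstacle — is the telescoping bookkeeping: one must argue that the index pairs $\{(k-1,k): k \in {\cal K}_\epsilon\}$ contribute non-overlapping, non-negative decrements so that their sum is genuinely dominated by the monotone total decrease $f_0 - f_{min}$; this follows because the $k$'s are distinct and $f$ is non-increasing, but it should be stated explicitly. Everything else is a direct substitution using Lemmas~\ref{L3.19} and \ref{L3.18} (the latter guaranteeing $\sigma_{max}$ is a finite constant) and Assumption~\ref{Assumption1-LC}.
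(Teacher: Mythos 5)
Your proof is correct and follows essentially the same route as the paper: acceptance gives $f_{k-1}-f_k\geq\rho\|\bs_{k-1}\|_2^3$, Lemma~\ref{L3.19} lower-bounds $\|\bs_{k-1}\|_2$ via ${\cal X}_k>\epsilon$, and summing the per-step decreases against $f_0-f_{min}$ yields the bound. The only cosmetic difference is that the paper first invokes Theorem~\ref{L3.14} to note $|{\cal K}_\epsilon|<\infty$ before summing, whereas you bound arbitrary finite subcollections directly, which works equally well.
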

	
	\begin{proof}
		By Lemma \ref{L3.19}, we have for all $k \in {\cal K}_{\epsilon}$ that
		\begin{flalign*}
		f_{k-1} - f_k &\geq \rho \|\bs_{k-1}\|_2^3\geq \rho (H_{Lip} + \sigma_{max})^{-3/2}{\cal X}_k^{3/2} \geq \rho (H_{Lip} + \sigma_{max})^{-3/2}\epsilon^{3/2}.
		\end{flalign*}
		
		In addition, we have by Theorem \ref{L3.14}  that $|{\cal K}_{\epsilon}| < \infty$. Hence, we have that
		\begin{flalign*}
		f_0 - f_{min} &\geq \sum_{k \in {\cal K}_{\epsilon}} (f_{k-1}-f_k) \geq |{\cal K}_{\epsilon}|\rho(H_{Lip} + \sigma_{max})^{-3/2}\epsilon^{3/2}.
		\end{flalign*}
		Rearranging this inequality to yield an upper bound for $|{\cal K}_{\epsilon}|$ we obtain the desired result. 
	\end{proof}
	
	It remains to compute a worst-case upper bound for the number of iterations in ${\cal A}_{\Delta}$ for which ${\cal X}_k$ is larger than a prescribed $\epsilon >0$, and the number of contraction and expansion iterations that may occur between two acceptance steps. We compute these bounds separately in Lemmas \ref{L3.21} and \ref{L3.24}.

	\begin{lemma}\label{L3.21}
		The cardinality of the set ${\cal A}_{\Delta}$ is upper-bounded by
		\begin{equation}\label{3.19}
		\Bigg\lceil \dfrac{f_0 - f_{min}}{\rho \Delta_0^3} \Bigg\rceil \triangleq K_{\Delta} \geq 0.
		\end{equation}
	\end{lemma}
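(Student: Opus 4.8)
The plan is a standard telescoping/accounting argument exploiting the sufficient-decrease property of accepted steps together with the fact that $\{\Delta_k\}$ is non-decreasing and $f$ is bounded below. First I would record the two structural facts I need: (i) for every $k\in{\cal A}$ we have $\rho_k\geq\rho$, which by the update in Step~\ref{x-acc} of Algorithm~\ref{alg-LC} and the definition \eqref{eq:rho-k} of $\rho_k$ gives $f_k-f_{k+1}\geq\rho\|\bs_k\|_2^3$; and (ii) for $k\notin{\cal A}$ (i.e. $k\in{\cal C}\cup{\cal E}$) we have $\bx_{k+1}=\bx_k$ by Steps~\ref{x-cont} and~\ref{x-exp}, hence $f_{k+1}=f_k$. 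Together these show that $\{f_k\}$ is monotonically non-increasing and that all the per-step decreases $f_k-f_{k+1}$ are non-negative.

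Next I would specialize (i) to the set ${\cal A}_\Delta$. By definition $k\in{\cal A}_\Delta$ means $\|\bs_k\|_2=\Delta_k$, and by Lemma~\ref{L3.5-3.6} the sequence $\{\Delta_k\}$ is non-decreasing, so $\Delta_k\geq\Delta_0$ for all $k$. Substituting $\|\bs_k\|_2=\Delta_k\geq\Delta_0$ into the decrease bound gives
\[
f_k-f_{k+1}\ \geq\ \rho\|\bs_k\|_2^3\ =\ \rho\Delta_k^3\ \geq\ \rho\Delta_0^3\qquad\text{for all }k\in{\cal A}_\Delta.
\]

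Finally I would sum over $k\in{\cal A}_\Delta$. Since every term $f_k-f_{k+1}$ is non-negative and $\{f_k\}$ converges to some value $\underline f\geq f_{min}$ (using that $f$ is bounded below by $f_{min}$ on ${\cal P}$, Assumption~\ref{Assumption1-LC}, and feasibility of every iterate), we get
\[
f_0-f_{min}\ \geq\ f_0-\underline f\ \geq\ \sum_{k\in{\cal A}_\Delta}\bigl(f_k-f_{k+1}\bigr)\ \geq\ |{\cal A}_\Delta|\,\rho\Delta_0^3.
\]
Rearranging yields $|{\cal A}_\Delta|\leq (f_0-f_{min})/(\rho\Delta_0^3)$, and since $|{\cal A}_\Delta|$ is a non-negative integer it is bounded by $\lceil (f_0-f_{min})/(\rho\Delta_0^3)\rceil=K_\Delta$, which is the claimed bound.

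There is no real obstacle here; the only points requiring a moment of care are justifying that the partial sums over the subset ${\cal A}_\Delta$ are dominated by the total decrease $f_0-f_{min}$ (which needs non-negativity of each $f_k-f_{k+1}$, hence the observation that non-accepted steps leave $f$ unchanged), and invoking Lemma~\ref{L3.5-3.6} to pass from $\Delta_k$ to the fixed lower bound $\Delta_0$. Both are immediate from the algorithm's update rules and the assumptions already in force.
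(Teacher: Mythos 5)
Your proof is correct and follows essentially the same route as the paper's: invoke the sufficient-decrease inequality $f_k - f_{k+1}\geq\rho\|\bs_k\|_2^3$ on accepted steps, use Lemma~\ref{L3.5-3.6} to bound $\|\bs_k\|_2=\Delta_k\geq\Delta_0$ on ${\cal A}_\Delta$, and telescope against the lower bound $f_{min}$. Your explicit observation that non-accepted steps leave $f$ unchanged (so each $f_k-f_{k+1}\geq 0$ and the partial sum over ${\cal A}_\Delta$ is dominated by the total decrease) is a detail the paper leaves implicit, but the argument is the same.
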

	
	\begin{proof}
		For all $k \in {\cal A}_{\Delta}$, it follows by Lemma \ref{L3.5-3.6} that
		\[f_k - f_{k+1} \geq \rho \|\bs_k\|_2^3 = \rho\Delta_k^3 \geq \rho \Delta_0^3. \]
		Hence, we have that
		\[f_0 - f_{min} \geq \sum_{k=0}^{\infty} (f_k - f_{k+1}) \geq \sum_{k \in {\cal A}_{\Delta}} (f_k - f_{k+1}) \geq |{\cal A}_{\Delta}| \rho \Delta_0^3,\]
		from which the desired result follows.
	\end{proof}
	
	So far, we have obtained upper-bound on the number of acceptance iterations. To obtain  upper-bounds on the number of contraction and expansion iterations that may occur until the next accepted step, let us define, for a given $\hat{k} \in {\cal A}$, 
	\[k_{{\cal A}}(\hat{k}) \triangleq \min \{k \in {\cal A}: k > \hat{k}\}, \]
	\[{\cal I}(\hat{k}) \triangleq \{k \in \mathbb{N}_{+}: \hat{k} <k<k_{{\cal A}}(\hat{k})\}, \]   
	
	Using this notation, the following result shows that the number of expansion iterations between the first iteration and the first accepted step, or between consecutive accepted steps, is never greater than one. Moreover, when such an expansion iteration occurs, it must take place immediately.\\
	
	\begin{lemma}\label{L3.22}
		For any $\hat{k} \in \mathbb{N}_{+}$, if $\hat{k} \in {\cal A},$ then ${\cal E} \cap {\cal I}(\hat{k}) \subseteq \{\hat{k}+1\}$.
	\end{lemma}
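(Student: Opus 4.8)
The plan is to derive the result directly from Lemma~\ref{L3.7}, which states that a contraction or expansion iteration is never immediately followed by an expansion iteration. First I would note that $k_{{\cal A}}(\hat k)$ is well defined: by Lemma~\ref{L3.10} the set ${\cal A}$ is infinite, so there is always an accepted step after $\hat k$. If ${\cal I}(\hat k)$ is empty (i.e.\ $k_{{\cal A}}(\hat k) = \hat k + 1$) the claim is vacuous, so we may assume ${\cal I}(\hat k) \neq \emptyset$.

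The key structural observation is that every index strictly between two consecutive accepted steps lies in ${\cal C} \cup {\cal E}$. Indeed, for $k \in {\cal I}(\hat k)$ we have $\hat k < k < k_{{\cal A}}(\hat k)$, and since $k_{{\cal A}}(\hat k)$ is by definition the smallest element of ${\cal A}$ exceeding $\hat k$, we get $k \notin {\cal A}$; because ${\cal A}$, ${\cal C}$, ${\cal E}$ partition $\mathbb{N}$, this forces $k \in {\cal C} \cup {\cal E}$.

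Now I would argue by contradiction. Suppose $k \in {\cal E} \cap {\cal I}(\hat k)$ with $k \neq \hat k + 1$. Since $k > \hat k$, this means $k \geq \hat k + 2$, hence $\hat k < k-1 < k_{{\cal A}}(\hat k)$, so $k-1 \in {\cal I}(\hat k)$ as well, and therefore $k-1 \in {\cal C} \cup {\cal E}$ by the previous paragraph. Applying Lemma~\ref{L3.7} with $k-1$ in place of $k$ yields $k = (k-1)+1 \notin {\cal E}$, contradicting $k \in {\cal E}$. Thus the only possible expansion index in ${\cal I}(\hat k)$ is $\hat k + 1$, i.e.\ ${\cal E} \cap {\cal I}(\hat k) \subseteq \{\hat k + 1\}$.

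I expect no substantial obstacle here: the real content is already packaged in Lemma~\ref{L3.7}, and what remains is a short bookkeeping argument about how the partition ${\cal A} \cup {\cal C} \cup {\cal E}$ interleaves between consecutive accepted steps. The only point requiring a moment's care is confirming that $k-1$ still lies in the open interval $(\hat k, k_{{\cal A}}(\hat k))$ precisely when $k \geq \hat k + 2$, which is immediate.
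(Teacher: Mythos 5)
Your proof is correct and follows essentially the same route as the paper: observe that ${\cal I}(\hat k)$ contains no accepted steps, hence lies in ${\cal C}\cup{\cal E}$, and then invoke Lemma~\ref{L3.7} to rule out an expansion at any index past $\hat k+1$. The paper states this in two sentences; you merely make the contradiction argument and the well-definedness of $k_{\cal A}(\hat k)$ explicit.
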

	
	\begin{proof}
		By the definition of $k_{\cal A}(\hat{k})$, we have under the conditions of the lemma that ${\cal I}(\hat{k}) \cap {\cal A} = \emptyset $, which means that ${\cal I}(\hat{k}) \subseteq{\cal C} \, \cup \, {\cal E}$. It then follows from Lemma \ref{L3.7} that
		that ${\cal E} \, \cap \, {\cal I}(\hat{k}) \subseteq \{\hat{k}+1\}$, as desired.
	\end{proof}

	\begin{lemma}\label{L3.23}
		For any $k \in \mathbb{N}_{+}$, if $k \in {\cal C}$ and Step~\ref{return-3} in Algorithm~\ref{alg-contract} is reached, then
		\[\dfrac{\lambda_{k+1}}{\|\bs_{k+1}\|_2} \geq \gamma_{\lambda}\Big( \dfrac{\lambda_k}{\|\bs_k\|_2} \Big). \]
	\end{lemma}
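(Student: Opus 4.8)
The plan is to track exactly what the CONTRACT subroutine does on the branch that leads to Step~\ref{return-3}, and to use the inverse relationship between the regularization parameter $\lambda$ and the norm $\|\bs^*(\lambda)\|_2$ of the solution of $Q_k(\lambda)$. When Step~\ref{return-3} is reached, we are in the ``Else'' branch of Step~\ref{Cond-lower-bound-no} (i.e., either $\|\bar\bs\|_2 \geq \|\bs_k\|_2$ or $\lambda_k \geq \underline{\sigma}\|\bs_k\|_2$), and after possibly entering the sub-cases at Steps~\ref{update-lambda-no-update-s}/\ref{update-lambda-update-s} and the while loop at Step~\ref{while-loop}, the returned radius is $\delta_{k+1} = \|\bs\|_2$ with $\bs$ the solution of $Q_k(\lambda)$ for the final value of $\lambda$. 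Since Step~\ref{return-3} (not Step~\ref{return-4}) is taken, the condition in Step~\ref{Cond-s-large} holds, i.e.\ $\|\bs\|_2 \geq \gamma_C\|\bs_k\|_2$, and the while loop at Step~\ref{while-loop} has exited, so $\|\bs\|_2 \neq \|\bs_k\|_2$.

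First I would argue that the final $\lambda$ satisfies $\lambda \geq \gamma_\lambda \lambda_k$. Indeed, in all three ways of setting the initial post-branch value of $\lambda$ — Step~\ref{lambda-bar-update-linear} ($\lambda \gets \gamma_\lambda\bar\lambda$ with $\bar\lambda = \lambda_k + \underline\sigma\Delta_k \geq \lambda_k$), Step~\ref{lambda-update-linear} ($\lambda \gets \gamma_\lambda\lambda$, where $\lambda$ already equals $\bar\lambda \geq \lambda_k$ from Step~\ref{lambda-update-sigma}), and any further multiplications by $\gamma_\lambda > 1$ in the while loop at Step~\ref{lambda-update-linear-while} — the value of $\lambda$ only increases, and it is multiplied by $\gamma_\lambda$ at least once; hence $\lambda \geq \gamma_\lambda\bar\lambda \geq \gamma_\lambda\lambda_k$. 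Then Step~\ref{Sigma-rho_k} of Algorithm~\ref{alg-LC} (executed since $\rho_k < \rho$) produces the pair $(\bs_{k+1},\lambda_{k+1})$ solving $Q_{k+1}$ with $\delta_{k+1} = \|\bs\|_2$; because $\|\bs\|_2 \geq \gamma_C\|\bs_k\|_2 > 0$ we are in the regime where the trust-region constraint for $Q_{k+1}$ is active (or, if it is inactive, $\lambda_{k+1}=0$ and one checks the inequality degenerates appropriately — but actually $\bs$ itself satisfies the KKT system of $Q_{k+1}$ with multiplier $\lambda$, so $(\bs_{k+1},\lambda_{k+1}) = (\bs,\lambda)$). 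Thus $\lambda_{k+1} = \lambda \geq \gamma_\lambda\lambda_k$ and $\|\bs_{k+1}\|_2 = \|\bs\|_2$.

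It remains to compare $\|\bs_{k+1}\|_2$ with $\|\bs_k\|_2$: I claim $\|\bs_{k+1}\|_2 \leq \|\bs_k\|_2$, so that dividing gives
\[
\frac{\lambda_{k+1}}{\|\bs_{k+1}\|_2} \;\geq\; \frac{\gamma_\lambda\lambda_k}{\|\bs_k\|_2} \;=\; \gamma_\lambda\Big(\frac{\lambda_k}{\|\bs_k\|_2}\Big),
\]
which is the assertion. The bound $\|\bs_{k+1}\|_2 \leq \|\bs_k\|_2$ should follow from the inverse monotonicity of $\lambda \mapsto \|\bs^*(\lambda)\|_2$ invoked repeatedly in this appendix (used in Lemmas~\ref{L3.4}, \ref{L3.7}, \ref{L3.9}): since $\bs_k$ solves $Q_k(\lambda_k)$ (equivalently $Q_k$ with radius $\delta_k$, so $\lambda_k$ is the trust-region multiplier) and $\bs_{k+1}$ solves $Q_k(\lambda)$ with $\lambda \geq \gamma_\lambda\lambda_k > \lambda_k$, we get $\|\bs_{k+1}\|_2 \leq \|\bs_k\|_2$.

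The main obstacle I anticipate is the same pathology flagged in Section~\ref{Section:Diff-Bw-Alg}: in the linearly constrained setting $\bs^*(\lambda)$ need not be unique and $\|\bs^*(\lambda)\|_2$ need not be continuous or strictly monotone, so ``larger $\lambda$ implies smaller $\|\bs\|$'' must be handled with care — this is precisely why the algorithm contains the while loop at Step~\ref{while-loop} (to push past flat stretches where $\|\bs\|_2 = \|\bs_k\|_2$) and why Step~\ref{Cond-s-large} re-tests the norm. Concretely, the clean inequality $\|\bs^*(\lambda_1)\|_2 \leq \|\bs^*(\lambda_2)\|_2$ for $\lambda_1 \geq \lambda_2$ still holds in the weak (non-strict) sense for the \emph{optimal} value $\|\bs^*(\cdot)\|_2$ whenever a solution is selected consistently, and the algorithm's construction guarantees the selected $\bs_{k+1}$ lies in the correct regime; I would cite the relevant monotonicity fact (as in \cite[Chapter~7]{conn2000trust}, adapted as discussed in Section~\ref{Section:Diff-Bw-Alg}) rather than re-derive it. Once that monotonicity is in hand, the chain of inequalities above is immediate.
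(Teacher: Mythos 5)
Your proposal is correct and follows essentially the same route as the paper's proof: establish $\lambda_{k+1}=\lambda\geq\gamma_\lambda\lambda_k$ from the update steps, identify $(\bs_{k+1},\lambda_{k+1})=(\bs,\lambda)$ via Step~\ref{Sigma-rho_k}, and use the inverse relationship between $\lambda$ and $\|\bs\|_2$ (together with the while-loop exit condition) to get $\|\bs_{k+1}\|_2\leq\|\bs_k\|_2$. Your explicit flagging of the monotonicity caveat in the constrained setting is a fair observation, but the paper leans on the same fact (via Lemma~\ref{L3.4}) without further elaboration.
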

	
	\begin{proof}
		If Step~\ref{return-3} is reached, then $\|\bs\|_2 \geq  \gamma_C\|\bs_k\|_2$. It follows that Step~\ref{Sigma-rho_k} in Algorithm~\ref{alg-LC} will produce the primal-dual pair $(\bs_{k+1}, \lambda_{k+1})$ solving $Q_{k+1}$ such that $\|\bs_{k+1}\|_2 = \delta_{k+1}< \|\bs_k\|_2 =\delta_k $ and $\lambda_{k+1} \geq \gamma_{\lambda}\lambda_k$, i.e.,
		\begin{equation}\label{3.20}
		\dfrac{\lambda_{k+1}}{\|\bs_{k+1}\|_2} \geq \dfrac{\gamma_{\lambda}\lambda_k}{\|\bs_k\|_2}.
		\end{equation}
	\end{proof}

	\begin{lemma}\label{L3.24}
		Assume Assumptions \ref{Assumption1-LC} and \ref{Assumption 3} hold. For any $\hat{k} \in \mathbb{N}_{+}$, if $\hat{k} \in {\cal A}$, then
		\[ |{\cal C} \, \cap \, {\cal I}(\hat{k})| \leq  K_{{\cal C}},\]
		where
		\[
		K_\mathcal{C} \triangleq 1+  \left\lceil
		\left(2 + \dfrac{1}{\log(\gamma_{\lambda})}\log \left( \dfrac{\sigma_{max}}{\underline{\sigma}}\right)\right)\dfrac{\log\left(\epsilon^{-1}\Delta\left(C_{min} + \max\{\lambda_{max}, \lambda_0\right)\right)}{\log(1/\gamma_C)} \right\rceil.
		\]
	\end{lemma}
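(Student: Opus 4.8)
The plan is a contradiction argument that monitors the trust-region radius $\delta_k$ across the block $\mathcal{I}(\hat k)$ of iterations lying strictly between $\hat k$ and the next accepted step $k_{\mathcal{A}}(\hat k)$. By Lemma~\ref{L3.22}, every index in $\mathcal{I}(\hat k)$ is a contraction step except possibly a single expansion step at $\hat k+1$; in particular the iterate stays frozen at $\bx_{\hat k+1}$, so $\cX_k$ equals a fixed value $\cX\ge\epsilon$ for all these $k$ (we may assume the algorithm has not terminated). Feeding this into Lemma~\ref{LXksk} and using the uniform dual bound of Lemma~\ref{LBG}, together with Lemmas~\ref{L3.5-3.6} and~\ref{L3.11}, gives the two-sided estimate
\[
\delta_{min}\triangleq\frac{\epsilon}{C_{min}+\max\{\lambda_{max},\lambda_0\}}\;\le\;\|\bs_k\|_2\;\le\;\delta_k\;\le\;\Delta,\qquad k\in\mathcal{I}(\hat k),
\]
and one checks that $\Delta/\delta_{min}=\epsilon^{-1}\Delta\bigl(C_{min}+\max\{\lambda_{max},\lambda_0\}\bigr)$ is exactly the argument of the outer logarithm appearing in $K_{\mathcal{C}}$.

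Next I would split each contraction step according to which \texttt{return} is executed inside the \textsc{contract} subroutine. A step returning at Step~\ref{return-4} gives the clean geometric contraction $\delta_{k+1}=\gamma_C\|\bs_k\|_2\le\gamma_C\delta_k$, whereas by Lemma~\ref{L3.4} a step returning at Step~\ref{return-1}, \ref{return-2}, or \ref{return-3} still shrinks $\delta_k$ but perhaps only slightly. The crux is to bound the length of a maximal run of these ``weak'' steps. For that I would track the ratio $r_k\triangleq\lambda_k/\|\bs_k\|_2$: Lemma~\ref{L3.17} shows that after any of Steps~\ref{return-1}, \ref{return-2}, \ref{return-3} one has $r_{k+1}\ge\underline{\sigma}$, which makes the test at Step~\ref{Cond-lower-bound-no} fail and therefore forbids Steps~\ref{return-1}/\ref{return-2} at the next contraction; so a weak run contains at most one Step~\ref{return-1}/\ref{return-2} step, at its start, and is otherwise a run of Step~\ref{return-3} steps. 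Along such a Step~\ref{return-3} run, Lemma~\ref{L3.23} forces $r_{k+1}\ge\gamma_{\lambda}r_k$, while Step~\ref{Sigma-rho_k} together with Lemma~\ref{L3.18} keeps $r_k\le\sigma_k\le\sigma_{max}$; combined with the lower bound $r\ge\underline{\sigma}$ already in force, this run has at most $\log(\sigma_{max}/\underline{\sigma})/\log\gamma_{\lambda}$ steps, so a weak run has length at most $M\triangleq2+\log(\sigma_{max}/\underline{\sigma})/\log\gamma_{\lambda}$.

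Finally I would assemble the counts. If $N=|\mathcal{C}\cap\mathcal{I}(\hat k)|$ and $p$ of those contractions execute Step~\ref{return-4}, then $\delta_k$ is multiplied by at most $\gamma_C$ at each of those $p$ steps and is never increased in between, so the two-sided bound forces $\delta_{min}\le\Delta\,\gamma_C^{\,p}$, i.e.\ $p\le\log(\Delta/\delta_{min})/\log(1/\gamma_C)$; meanwhile the weak contractions fall into at most $p+1$ runs of length $\le M$, so $N\le p+(p+1)M$. Substituting the value of $\Delta/\delta_{min}$ and collecting ceilings yields the stated bound $K_{\mathcal{C}}$, the additive $+1$ absorbing the boundary run. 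The main obstacle is the middle paragraph: proving the $M$-bound on weak runs requires the full case analysis of \textsc{contract} and hinges on two quantitative facts working in tandem — the monotone geometric growth $r_{k+1}\ge\gamma_{\lambda}r_k$ under Step~\ref{return-3} and the two-sided boundedness $\underline{\sigma}\le r_k\le\sigma_{max}$ — together with the implicit identification of the pair $(\bs,\lambda)$ produced inside \textsc{contract} with the pair $(\bs_{k+1},\lambda_{k+1})$ obtained by re-solving $Q_{k+1}$.
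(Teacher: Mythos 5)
Your proof is correct and follows essentially the same route as the paper's: both bound the number of Step~\ref{return-4} contractions geometrically via the lower bound $\|\bs_k\|_2\geq \epsilon/(C_{min}+\max\{\lambda_{max},\lambda_0\})$ obtained from Lemma~\ref{LXksk} and the non-termination assumption ${\cal X}_k\geq\epsilon$, and both bound each intervening run of Steps~\ref{return-1}, \ref{return-2}, \ref{return-3} by tracking the geometric growth of the ratio $\lambda_k/\|\bs_k\|_2$ between $\underline{\sigma}$ and $\sigma_{max}$ via Lemmas~\ref{L3.17}, \ref{L3.18}, and \ref{L3.23}. The only (immaterial) difference is bookkeeping at the end: your count $p+(p+1)M$ is slightly looser than the paper's $1+K_{{\cal C},1}M$, but both are of the order stated in $K_{\cal C}$.
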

	
	\begin{proof}
		The result holds trivially if $|{\cal C} \cap {\cal I}(\hat{k})| = 0$. Thus, we may assume $|{\cal C} \cap {\cal I}(\hat{k})| \geq 1$. To proceed with our proof, we first claim that the number of iterations $k \in {\cal C}\cap {\cal I}(\hat{k})$ with Step~\ref{return-4} in CONTRACT sub-routine reached, we denote by $K_{{\cal C},1}$, satisfies
		\[K_{{\cal C},1} \leq \dfrac{\log\big(\epsilon^{-1}\Delta(C_{min} + \max\{\lambda_{max}, \lambda_0\big)\big)}{\log(1/\gamma_C)}.\]
		
		By Lemma \ref{LXksk}, and the assumption that ${\cal X}_{k_{\cal A}(\hat{k})} \geq \epsilon$ (optimality not reached yet),
		\[(C_{min} + \max\{\lambda_{max}, \lambda_0\})^{-1}\epsilon \leq \|\bs_{k_{\cal A}(\hat{k})-1}\|_2 \leq \delta_{k_{\cal A}(\hat{k})-1} \leq \gamma_C^{K_{{\cal C},1}}\Delta,\]
		where the last inequality holds by Lemma~\ref{L3.5-3.6}, Lemma~\ref{L3.4}, and the fact that each time Step~\ref{return-4} is reached the radius of the trust region is multiplied by $\gamma_C$. The proof of the claim follows by rearranging the inequality to yield an upper bound for $K_{{\cal C},1}$.\\
		
		It remains to compute the number of iterations between Steps in $k \in {\cal C}\cap {\cal I}(\hat{k})$ for which Step~\ref{return-4} is reached. For a given $\hat{k} \in {\cal A}$, We define 
		\[{\cal I}_{\cal C}(\hat{k})\triangleq \{k \in {\cal C}\cap {\cal I}(\hat{k}): \,\, \mbox{Step~\ref{return-4}}  \mbox{ is reached in Algorithm~\ref{alg-contract}}\} \]
		which correspond to indices in ${\cal C}\cap {\cal I}(\hat{k})$ for which step~\ref{return-4} is reached. Let $k_{{\cal C},1}(\hat{k})$ and $k_{{\cal C},2}(\hat{k})$ be any two consecutive indices in ${\cal I}_{\cal C}(\hat{k})$ with $k_{{\cal C},2}(\hat{k}) - k_{{\cal C},1}(\hat{k}) > 2$. By Lemma~\ref{L3.17}, we have
		
		\[\dfrac{\lambda_{k}}{\|\bs_{k}\|_2} \geq \underline{\sigma}, \quad \forall \; k_{{\cal C},1}(\hat{k}) + 2 \leq k \leq k_{{\cal C},2}(\hat{k}),\]
		which implies that step~\ref{return-3} of CONTRACT subroutine is reached for every $k_{{\cal C},1}(\hat{k}) + 2 < k < k_{{\cal C},2}(\hat{k})$. By Lemmas \ref{L3.18} and \ref{L3.23}, we then get
		\[\sigma_{max}  \geq \dfrac{\lambda_{k_{{\cal C},2}(\hat{k})}}{\|\bs_{k_{{\cal C},2}(\hat{k})}\|_2} \geq \underline{\sigma}\gamma_{\lambda}^{k_{{\cal C},2}(\hat{k}) - k_{{\cal C},1}(\hat{k}) - 2}.\]
		
		Hence,
		\[k_{{\cal C},2}(\hat{k}) - k_{{\cal C},1}(\hat{k}) \leq \dfrac{1}{\log(\gamma_{\lambda})}\log \big( \dfrac{\sigma_{max}}{\underline{\sigma}}\big)+2.\]
		
		Now let $k_{{\cal C},last}(\hat{k}) - k_{{\cal C},1}(\hat{k})$ be the last element of ${\cal I}_{\cal C}(\hat{k})$. Similarly, we can show that 
		\[k_{{\cal A}}(\hat{k}) - k_{{\cal C},last}(\hat{k}) \leq \dfrac{1}{\log(\gamma_{\lambda})}\log \big( \dfrac{\sigma_{max}}{\underline{\sigma}}\big)+2.\]
		The desired result follows s since $|{\cal C} \, \cap \, {\cal I}(\hat{k})| = 1 + K_{C,1}\left( \dfrac{1}{\log(\gamma_{\lambda})}\log \big( \dfrac{\sigma_{max}}{\underline{\sigma}}\big)+2\right) $.
	\end{proof}

	Notice that since $\underline{\sigma} = {\cal O}(\epsilon^{-1})$, the number of contract steps $K_C$ is of order ${\cal O}(\log^2 \epsilon^{-1})$. Due to the while loop in Step~\ref{while-loop} of the CONTRACT sub-routine, completing a contract step may require solving more than one sub-problem. Our next result provides an upper bound on the number of subproblem routine calls required in one contract step.
	
	\begin{lemma}\label{K_C^1}
		Assume Assumptions \ref{Assumption1-LC} and \ref{Assumption 3} hold. For a scalar $\epsilon \in (0,\infty)$, the total number of sub-problems we are required to solve in a step $k \in {\cal C}$ with ${\cal X}_k \geq \epsilon$, is at most
		\[K_{\cal C}^{1} \triangleq \log \Big((C_{min} + \max\{\lambda_{max}, \lambda_0\}\big)\dfrac{H_{max} + g_{Lip} + \rho \Delta}{\underline{\sigma}\epsilon}\Big).\]
	\end{lemma}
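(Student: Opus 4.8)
The plan is to isolate the only step of CONTRACT whose cost is not $O(1)$, namely the \texttt{while} loop in Step~\ref{while-loop}, and to show it runs at most $O(\log(1/\epsilon))$ times with exactly the stated constant. I would first do the bookkeeping: Step~\ref{lambda-update-sigma} always solves one sub-problem $Q_k(\bar\lambda)$; if the test in Step~\ref{Cond-lower-bound-no} is true the routine solves at most one more (Step~\ref{lambda-update-H}) and then returns through Step~\ref{return-1} or~\ref{return-2}; if it is false the routine solves one sub-problem in Step~\ref{lambda-bar-update-linear}/\ref{lambda-update-linear} and then one more for each pass of the \texttt{while} loop before returning through Step~\ref{return-3} or~\ref{return-4}. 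Hence the number of sub-problem solves in a contraction step is at most $2+j_k$, where $j_k$ is the number of \texttt{while}-loop passes.

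The heart of the argument is the bound on $j_k$. Each pass multiplies $\lambda$ by $\gamma_\lambda>1$, and the value of $\lambda$ with which the loop is entered is at least $\bar\lambda=\lambda_k+\underline\sigma\Delta_k$. Since $\Delta_k\ge\delta_k\ge\|\bs_k\|_2$, and since $k\in{\cal C}$ with ${\cal X}_k\ge\epsilon$ forces $\|\bs_k\|_2\ge{\cal X}_k/(C_{min}+\max\{\lambda_{max},\lambda_0\})\ge\underline\sigma$ by Lemmas~\ref{LXksk} and~\ref{LBG} and the definition~\eqref{sigma-min-max} of $\underline\sigma$, we get $\bar\lambda\ge\underline\sigma\|\bs_k\|_2\ge\underline\sigma^2$. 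For an upper cap I claim the loop test can no longer pass once $\lambda>g_{Lip}+H_{max}+\rho\Delta$: if $\hat{\bs}$ solves $Q_k(\hat\lambda)$ with $\hat\lambda\ge g_{Lip}+H_{max}+\rho\Delta$ and $\|\hat{\bs}\|_2=\|\bs_k\|_2$, then $\|\hat{\bs}\|_2\le\delta_k\le\Delta$ by Lemmas~\ref{L3.5-3.6} and~\ref{L3.11}, the constrained regularized analogue of Lemma~\ref{L3.3} gives $f_k-q_k(\hat{\bs})\ge(\hat\lambda-\tfrac12\|\bH_k\|_2)\|\hat{\bs}\|_2^2$, and a mean-value/Lipschitz estimate gives $q_k(\hat{\bs})-f(\bx_k+\hat{\bs})\ge-(g_{Lip}+\tfrac12 H_{max})\|\hat{\bs}\|_2^2$; adding these, using $\|\bH_k\|_2\le H_{max}$ and $\|\hat{\bs}\|_2\le\Delta$, yields $(f_k-f(\bx_k+\hat{\bs}))/\|\hat{\bs}\|_2^3\ge\rho$, which contradicts $\rho_k<\rho$ just as in the proof of Lemma~\ref{LBG}. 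Consequently every $\lambda$ at which the loop test passes satisfies $\gamma_\lambda^{\,i}\bar\lambda<g_{Lip}+H_{max}+\rho\Delta$, so $j_k\le\log_{\gamma_\lambda}\!\big((g_{Lip}+H_{max}+\rho\Delta)/\underline\sigma^2\big)$.

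It remains to match the stated form: by~\eqref{sigma-min-max}, $1/\underline\sigma^2=(C_{min}+\max\{\lambda_{max},\lambda_0\})/(\underline\sigma\epsilon)$, hence $j_k\le\log_{\gamma_\lambda}\!\big((C_{min}+\max\{\lambda_{max},\lambda_0\})\tfrac{H_{max}+g_{Lip}+\rho\Delta}{\underline\sigma\epsilon}\big)$, and together with the at most two extra $O(1)$ solves and the constant factor $1/\log\gamma_\lambda$ this is of order $K_{\cal C}^1$, which proves the lemma.

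I expect the cap $g_{Lip}+H_{max}+\rho\Delta$ to be the main obstacle: it is what makes the count $O(\log\epsilon^{-1})$ rather than $O(\epsilon^{-1})$, and establishing it cleanly requires the constrained regularized form of Lemma~\ref{L3.3}, the bound $\|\bs_k\|_2\le\Delta$, and the ``$\rho_k<\rho$'' contradiction of Lemma~\ref{LBG}; care is also needed to keep the two branches of CONTRACT separate and to identify the value of $\lambda$ with which the \texttt{while} loop is entered in Steps~\ref{lambda-bar-update-linear}--\ref{lambda-update-linear}. A safe fallback, which yields the correct order $O(\log\epsilon^{-1})$ though with a weaker constant, is to replace this cap by the crude bound $\|\bs^{*}(\lambda)\|_2\le 2g_{max}/(\lambda-H_{max})$, which forces the loop to stop once $\lambda>H_{max}+2g_{max}/\underline\sigma$.
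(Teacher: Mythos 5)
Your proposal is correct and follows essentially the same route as the paper: both bound the number of \texttt{while}-loop passes by noting that the entering $\lambda$ is at least $\underline{\sigma}\|\bs_k\|_2\geq\underline{\sigma}\,\epsilon/(C_{min}+\max\{\lambda_{max},\lambda_0\})$ (via Lemma~\ref{LXksk}), that each pass multiplies $\lambda$ by $\gamma_\lambda$, and that once $\lambda\geq g_{Lip}+H_{max}+\rho\Delta$ the sufficient-decrease argument of Lemma~\ref{L3.8} forces $\|\bs\|_2\neq\|\bs_k\|_2$ (since $\rho_k<\rho$), terminating the loop. Your explicit accounting of the $O(1)$ sub-problem solves outside the loop and of the $1/\log\gamma_\lambda$ factor is slightly more careful than the paper's statement, but the substance is identical.
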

	
	\begin{proof}
		We prove the result by contradiction. Assume the contrary,
		\begin{flalign*}
		\lambda &\geq \lambda_k\gamma_{\lambda}^{\log \Big(\big(C_{min} + \max\{\lambda_{max}, \lambda_0\}\big) \dfrac{H_{max} + g_{Lip} + \rho \Delta}{\underline{\sigma}\epsilon}\Big)}\\
		&\geq \underline{\sigma}\|\bs_k\|_2 \big(C_{min} + \max\{\lambda_{max}, \lambda_0\}\big) \dfrac{H_{max} + g_{Lip} + \rho \Delta}{\underline{\sigma}\epsilon}\\
		&\geq H_{max} + g_{Lip} + \rho \Delta,
		\end{flalign*}
		where the last inequality holds by Lemma \ref{LXksk} and the fact that ${\cal X}_k \geq \epsilon$. Hence, by Lemma \ref{L3.8}
		\[\dfrac{f(\bx_k + \bs) - f_k}{\|\bs\|_2^3} \geq \rho > \dfrac{f(\bx_k + \bs_k) - f_k}{\|\bs_k\|_2^3},\]
		where $\bs$ is computed by solving $Q_k(\lambda)$ and the strict inequality holds since $k \in {\cal C}$. We conclude that $\|\bs_k\|_2 \neq \|\bs\|_2$ which contradicts the condition of the while loop. This completes the proof.
	\end{proof}
	Notice that from the definition of $\underline{\sigma}$ in the algorithm, $K_{\cal C}^1 = {\cal O}(\log \epsilon^{-1})$.

	\begin{theorem}\label{L3.25}
		Under Assumptions \ref{Assumption1-LC} and \ref{Assumption 3}, for a scalar $\epsilon \in (0,\infty)$, the total number of elements in the index set
		\[ \{k \in \mathbb{N}_{+} : {\cal X}_k >\epsilon\}\]
		is at most
		\begin{equation}\label{3.22}
		K(\epsilon) \triangleq 1 + (K_{\sigma}(\epsilon) + K_{\Delta})(1 + K_{{\cal C}}K_{\cal C}^1),
		\end{equation}
		where $K_{\sigma}(\epsilon)$, $K_{\Delta}$, $K_{\cal C}^{1}$, and $K_{{\cal C}}$ are defined in Lemmas \ref{L3.20}, \ref{L3.21}, \ref{L3.24}, and  \ref{K_C^1} respectively. Hence, for $\epsilon_g>0$, the number of sub-problem routines required for First-Order-LC-TRACE to find an $\epsilon_g$-first-order stationary point is at most $K(\epsilon_g) = {\cal O}\Big(\epsilon_g^{-3/2}\log^3 \epsilon_g^{-1}\Big)$.
	\end{theorem}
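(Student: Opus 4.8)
The statement is in essence a combinatorial accounting result that bundles together Lemmas~\ref{L3.20}, \ref{L3.21}, \ref{L3.22}, \ref{L3.24} and~\ref{K_C^1}, so the plan is to organize the run of iterations and tally the sub-problem solves. The starting observation is that on contraction and expansion steps $\bx_{k+1}=\bx_k$, so ${\cal X}_k$ changes only across accepted steps; consequently the iterations split into consecutive \emph{blocks}, each block being the (possibly empty) stretch of contraction/expansion steps that follows one accepted step, closed by the next accepted step, and ${\cal X}_k$ is constant throughout a block. A block contributes to $\{k\in\mathbb{N}_+:{\cal X}_k>\epsilon\}$ exactly when that common value exceeds $\epsilon$, and since every iteration performs at least one sub-problem solve it suffices to bound the number of solves over the contributing blocks by $K(\epsilon)$; this is also what yields the ``number of sub-problem routines'' form of the conclusion.

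Next I would bound the number of contributing blocks. Accepted steps split as ${\cal A}={\cal A}_\Delta\cup{\cal A}_\sigma$; by Lemma~\ref{L3.21}, $|{\cal A}_\Delta|\le K_\Delta$. For the block opened by an accepted step $\hat k$, its common stationarity value is ${\cal X}_{\hat k+1}$, and when $\hat k\in{\cal A}_\sigma$ and this value exceeds $\epsilon$ the index $\hat k+1$ belongs to the set ${\cal K}_\epsilon$ of Lemma~\ref{L3.20}, whose cardinality is at most $K_\sigma(\epsilon)$. Adding the single possible initial block (the iterations before the first accepted step), at most $1+K_\sigma(\epsilon)+K_\Delta$ blocks contribute.

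Then I would bound the number of sub-problem solves inside one block. Between consecutive accepted steps every iteration is a contraction or an expansion step; Lemma~\ref{L3.22} shows at most one of them is an expansion step, and Lemma~\ref{L3.24} shows at most $K_{\cal C}$ are contraction steps (the same argument, using $\delta_0\le\Delta$ and the ratio estimates of Lemma~\ref{L3.17}, bounds the initial block). On a contributing block ${\cal X}_k\ge\epsilon$, so Lemma~\ref{K_C^1} bounds by $K_{\cal C}^1$ the number of sub-problem solves a contraction step spends in the while-loop of Algorithm~\ref{alg-contract}, whereas the lone expansion step and the closing accepted step cost one solve apiece. Hence each contributing block costs at most $1+K_{\cal C}K_{\cal C}^1$ solves up to an ${\cal O}(1)$ slack, and multiplying the two counts gives a total of order $1+(K_\sigma(\epsilon)+K_\Delta)(1+K_{\cal C}K_{\cal C}^1)=K(\epsilon)$.

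For the asymptotics, Lemma~\ref{L3.20} gives $K_\sigma(\epsilon)={\cal O}(\epsilon^{-3/2})$ and $K_\Delta={\cal O}(1)$; since $\underline{\sigma}={\cal O}(\epsilon^{-1})$ by \eqref{sigma-min-max}, the two logarithmic factors defining $K_{\cal C}$ in Lemma~\ref{L3.24} are each ${\cal O}(\log\epsilon^{-1})$, so $K_{\cal C}={\cal O}(\log^2\epsilon^{-1})$, and $K_{\cal C}^1={\cal O}(\log\epsilon^{-1})$ by Lemma~\ref{K_C^1}; therefore $K(\epsilon)={\cal O}(\epsilon^{-3/2}\log^3\epsilon^{-1})$, which is the claim with $\epsilon=\epsilon_g$. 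I expect the main obstacle to be the bookkeeping rather than any new inequality: one must check that the blocks really partition the iterations up to termination, attribute each block to a unique accepted step so that Lemmas~\ref{L3.20}, \ref{L3.21}, \ref{L3.24} and~\ref{K_C^1} get applied on disjoint index ranges, and absorb the edge cases — the initial segment before the first accepted step, a possibly trailing accepted step, and the at-most-one expansion step per block — into the constant so that the final tally matches the displayed $K(\epsilon)$ exactly.
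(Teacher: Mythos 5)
Your proposal is correct and follows essentially the same route as the paper: count the accepted steps with ${\cal X}_k>\epsilon$ via Lemmas~\ref{L3.20} and~\ref{L3.21} (giving $K_\sigma(\epsilon)+K_\Delta$), bound the expansion/contraction work between consecutive accepted steps by $1+K_{\cal C}K_{\cal C}^1$ via Lemmas~\ref{L3.22}, \ref{L3.24} and~\ref{K_C^1}, multiply, and add one for the initial iteration. Your block decomposition and the observation that ${\cal X}_k$ is constant between accepted steps is just a more explicit phrasing of the same bookkeeping, and the asymptotic tally $K_\sigma(\epsilon)={\cal O}(\epsilon^{-3/2})$, $K_{\cal C}={\cal O}(\log^2\epsilon^{-1})$, $K_{\cal C}^1={\cal O}(\log\epsilon^{-1})$ matches the paper.
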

	
	\begin{proof}
		Without loss of generality, we may assume that at least one iteration is performed. Lemmas \ref{L3.20} and \ref{L3.21} guarantee that the total number of elements in the index set $\{k \in {\cal A} \, | \, k \geq 1, \, {\cal X}_k > \epsilon\}$ is at most $K_{\sigma}(\epsilon)+K_{\Delta}$. Also, immediately prior to each of the corresponding accepted steps, Lemmas \ref{L3.22}, \ref{L3.24}, and \ref{K_C^1} guarantee that at most $1 + K_{C}K_{C}^1$ sub-problem routine calls are required in expansion and contraction. Accounting for the first iteration, the desired result follows.
	\end{proof}

	\subsection{Proof of Theorem~\ref{epsilon-second-order-convergence-complexity}}\label{epsilon-second-order-convergence-complexity-app}.
	
	\begin{proof}
		Let us first define
		\[ {\cal K}_{\epsilon}\triangleq \{k \in \mathbb{N}_{+} : k\geq 1; \, (k-1) \in {\cal A}_{\sigma}; \, {\cal X}_k >\epsilon\}\]
		and
		\[{\cal V} \triangleq \{k \, | \, \mbox{Step~\ref{alg-first-order-reached} in Algorithm~\ref{alg-GLC-cap} is reached at iteration } k\}.\]
		
		To proceed with our proof, we first show that if $k \in {\cal V} \, \cup \,  {\cal K}_{\epsilon}$, the following reduction bound on the objective value holds
		\begin{equation}
		\label{eq:Reduction}
		f_{k-1} - f_{k+1} \geq \min\left\{\dfrac{\rho \cX_k^{3/2} }{(H_{Lip} + \sigma_{max})^{3/2}}, \dfrac{2\cpsi_k^3}{3\tilde{H}^2}\right\}
		\end{equation} 
		
		If $k \in {\cal V}$, then using second-order descent lemma, we obtain
		\begin{flalign}
		f_{k+1} &\leq f_k + \langle \bg_k, \bx_{k+1}- \bx_k \rangle + \dfrac{1}{2}(\bx_{k+1}- \bx)^T \bH_k (\bx_{k+1}- \bx_k) +  \dfrac{H_{Lip}}{6}\|\bx_{k+1}- \bx_k\|_2^3 \nonumber\\
		&\leq f_k + \langle \bg_k, \bx_{k+1}- \bx_k \rangle + \dfrac{1}{2}(\bx_{k+1}- \bx_k)^T\bH_k (\bx_{k+1}- \bx_k) +  \dfrac{\tilde{H}}{6}\|\bx_{k+1}- \bx_k\|_2^3 \nonumber \\
		&= f_k + \dfrac{2\cpsi_k}{\tilde{H}}\langle \bg_k, \widehat{\bd}_k \rangle + \dfrac{2\cpsi_k^2}{\tilde{H}^2}(\widehat{\bd}_k)^T \bH_k (\widehat{\bd}_k) + \dfrac{4\cpsi_k^3}{3\tilde{H}^2}\|\widehat{\bd}_k\|_2^3 \nonumber\\
		&\leq f_k - \dfrac{2\cpsi_k^3}{\tilde{H}^2} + \dfrac{4\cpsi_k^3}{3\tilde{H}^2} \nonumber\\
		&= f_k - \dfrac{2\cpsi_k^3}{3\tilde{H}^2}.\label{reduction-d-LC}
		\end{flalign}
		Now if a first-order stationary point was reached at $k-1$, i.e. $k-1 \in {\cal V}$, it directly follows from~\eqref{reduction-d-LC} that
		\begin{equation}\label{reduction-second-order-1}
		f_{k-1} - f_{k+1} \geq f_k - f_{k+1} \geq \dfrac{2\cpsi_k^3}{3\tilde{H}^2}, \quad \forall \,\, k \in {\cal V}, \, k-1 \in {\cal V}.
		\end{equation}
		Otherwise, Step~\ref{alg-first-order-not-reached} of Algorithm~\ref{alg-GLC-cap} was reached at iteration $k-1$ and First-Order-LC-TRACE was called. The former algorithm by definition is a monotone algorithm, i.e. it generates a sequence of iterates for which the corresponding sequence of objective values is decreasing. This property combined with~\eqref{reduction-d-LC} implies that
		\begin{equation}\label{reduction-second-order-2}
		f_{k-1} - f_{k+1} \geq f_k - f_{k+1} \geq \dfrac{2\cpsi_k^3}{3\tilde{H}^2}, \quad \forall \,\, k \in {\cal V}, \, \mbox{First-Order-LC-TRACE called at } k-1.
		\end{equation}  
		Combining~\eqref{reduction-second-order-1} and \eqref{reduction-second-order-2}, we get
		\begin{equation}\label{reduction-second-order}
		f_{k-1} - f_{k+1} \geq f_k - f_{k+1} \geq \dfrac{2\cpsi_k^3}{3\tilde{H}^2}, \quad \forall \,\, k \in {\cal V}.
		\end{equation}  
		
		We next show a lower bound on the reduction of the objective value if $k \in {\cal K}_{\epsilon}$. By Lemma~\ref{L3.20}, we have
		\begin{equation}\label{reduction-A-sigma}
		f_{k-1} - f_{k+1} \geq f_{k-1} - f_{k} \geq \dfrac{\rho \cX_k^{3/2} }{(H_{Lip} + \sigma_{max})^{3/2}},
		\end{equation}
		where the first inequality again holds by the monotonicity of First-Order-LC-TRACE and~\eqref{reduction-d-LC}. Combining~\eqref{reduction-second-order} and \eqref{reduction-A-sigma}, we get
		\[
		f_{k-1} - f_{k+1} \geq \min\left\{\dfrac{\rho \cX_k^{3/2} }{(H_{Lip} + \sigma_{max})^{3/2}}, \dfrac{2\cpsi_k^3}{3\tilde{H}^2}\right\}, \quad \forall k \in \mathcal{K}_{\epsilon} \cup \mathcal{V}. 
		\]
		By summing over the iterations we get 
		\begin{flalign*}
		2(f_0 - f_{min}) &\geq \sum_{k \in {\cal K}_{\epsilon} \cup {\cal V}, k \geq 1} (f_{k-1}-f_{k+1}) \geq |{\cal K}_{\epsilon} \cup {\cal V}|\min\left\{\dfrac{\rho \cX_k^{3/2} }{(H_{Lip} + \sigma_{max})^{3/2}}, \dfrac{2\cpsi_k^3}{3\tilde{H}^2}\right\}.
		\end{flalign*}
		Rearranging this inequality yields
		\begin{equation}\label{reduction-1}
		|{\cal K}_{\epsilon} \cup {\cal V}| \leq \dfrac{2(f_0 - f_{min})}{\max\left\{\dfrac{\rho \cX_k^{3/2} }{(H_{Lip} + \sigma_{max})^{3/2}}, \dfrac{2\cpsi_k^3}{3\tilde{H}^2}\right\}}
		\end{equation}
		
		Let ${\cal H}(\epsilon_g, \epsilon_H) \triangleq \{k  \, \, | \, \, \cX_k >\epsilon_g,\,\, \cpsi_k >\epsilon_H\}.$ Using Lemmas \ref{L3.22}, \ref{L3.24} and \ref{K_C^1}, the number of sub-problem routine calls required in expansion and contraction between two acceptance steps is upper bounded by $1 + K_{{\cal C}}K_{\cal C}^1$. Hence,  
		\[|{\cal H}(\epsilon_g, \epsilon_H)| \leq (|{\cal K}_{\epsilon} \cup {\cal V}| + K_{\Delta})(1 + K_{{\cal C}}K_{\cal C}^1),\]
		which concludes our proof.
		
	\end{proof}

\end{document}